\newtheorem{thm}{Theorem}
\newtheorem{prop}{Proposition}
\newtheorem{lemme}{Lemma}
\newtheorem{cor}{Corollary}
\newtheorem{déf}{Definition}
\newtheorem{rem}{Remark}
\newtheorem{notation}{Notation}
\title{Toric generalized Kähler structures}
\author{Laurence Boulanger}
\begin{document}
\maketitle

\setlength{\parskip}{0.05cm}

\begin{abstract}
Given a compact symplectic toric manifold $(M,\omega, \mathbb{T})$, we identify a class $DGK_{\omega}^{\mathbb{T}}(M)$ of $\mathbb{T}$-invariant generalized Kähler structures for which a generalisation the Abreu-Guillemin theory of toric Kähler metrics holds. Specifically, elements of $DGK_{\omega}^{\mathbb{T}}(M)$ are characterized by the data of a strictly convex function $\tau$ on the moment polytope associated to $(M,\omega, \mathbb{T})$ via the Delzant theorem, and an antisymmetric matrix $C$. For a given $C$, it is shown that a toric Kähler structure on $M$ can be explicitly deformed to a non-Kähler element of $DGK_{\omega}^{\mathbb{T}}(M)$ by adding a small multiple of $C$. This constitutes an explicit realization of a recent unobstructedness theorem of R. Goto \cite{goto:1, goto:2}, where the choice of a matrix $C$ corresponds to choosing a holomorphic Poisson structure. Adapting methods from S. K. Donaldson \cite{donaldson:2}, we compute the moment map for the action of $\mathrm{Ham}(M,\omega)$ on $DGK_{\omega}^{\mathbb{T}}(M)$. The result introduces a natural notion of "generalized Hermitian scalar curvature". In dimension 4, we find an expression for this generalized Hermitian scalar curvature in terms of the underlying bi-Hermitian structure in the sense of Apostolov-Gauduchon-Grantcharov \cite{apostolov:4}.
\end{abstract}

\section{Introduction}

This paper is concerned with the theory of generalized Kähler structures as defined and studied by M.~Gualtieri in \cite{gualtieri:1} in the context of N.~Hitchin's \cite{hitchin:1} generalized complex geometry. Our goal is to identify a natural notion of scalar curvature for a generalized Kähler structure. The approach we use to study this problem draws 
from the following three ingredients. 

(1) The first concerns the interpretation of the scalar curvature as a moment map. Given a compact symplectic $2m$-manifold $(M,\omega)$, the space $AK_{\omega}(M)$ of $\omega$-compatible almost complex structures on $M$ is a Fréchet manifold endowed with a natural formal Kähler structure. A.~Fujiki and S.~K.~Donaldson oberved that the group $\mathrm{Ham}(M,\omega)$ of hamiltonian diffeomorphisms acts on $AK_{\omega}(M)$ in a hamiltonian fashion, and that the moment map can be identified with the Hermitian scalar curvature $u_J$ of the almost Hermitian structure $(\omega,J)$ as follows. Recall that $u_J$ is defined as
\begin{equation}\label{uAlternative}
u_J=\frac{2m\rho\wedge \omega^{m-1}}{\omega^m},
\end{equation}
where $\rho$ is the real curvature 2-form of the hermitian connection induced on the anticanonical bundle of $(M,J)$ by the Chern connection of $(\omega,J)$.

\begin{thm}[\cite{fujiki:1,donaldson:1}]\label{Theorem 1}
Let $C^{\infty}_0(M)$ be the space of smooth functions on $M$ with zero mean, identified to the Lie algebra $\mathfrak{ham}(M,\omega)$ via the Poisson bracket. Then the expression
\begin{equation}\label{ThmDonaldson}
\nu^f(J):=-\int_Mfu_J\frac{\omega^m}{m!}
\end{equation}
is the moment map for the natural action of $\mathrm{Ham}(M,\omega)$ on $AK_{\omega}(M)$.
\end{thm}

The reader can consult \cite{gauduchon:2} for a detailed proof.

(2) The second ingredient is the computation by S.~K.~Donaldson of this moment map in the context of the Abreu-Guillemin theory of toric Kähler metrics. Let $(M,\omega,\mathbb{T})$ be a symplectic toric $2m$-manifold with moment map $\mu:M\rightarrow \mathfrak{t}^*$ and let $K_{\omega}^{\mathbb{T}}(M)$ be the subspace of $\mathbb{T}$-invariant $\omega$-compatible complex structures. In his seminal work on toric Kähler structures, V.~Guillemin discovered that the elements $J\in K_{\omega}^{\mathbb{T}}(M)$ can be described, up to $\mathbb{T}$-invariant biolomorphisms, in terms of convex functions on the interior of the moment polytope $\Delta$ for $(M,\omega, \mathbb{T})$ as follows.

\begin{thm}[\cite{guillemin:1}]\label{ThmGuillemin}
 For any $J\in K_{\omega}^{\mathbb{T}}(M)$ and any given choice of basis $(\xi_1,\ldots,\xi_m)$ of $\mathfrak{t}$, there exists momentum-angle coordinates $(\mu^j,t^j)$ on $\mathring{M}$ such that
$$
\omega =  \sum\limits_{j=1}^md\mu^j\wedge dt^j
$$
and
$J$ is of the anti-diagonal form 
\begin{equation}\label{FormeAntidiagoKähler}
J\frac{\partial}{\partial \mu^j} = \sum\limits_{k=1}^m\Psi_{jk}\frac{\partial}{\partial t^k}, \quad \Psi_{jk} = \frac{\partial^2\tau}{\partial\mu^j\partial\mu^k},
\end{equation}
where $\tau=\tau(\mu^1,\ldots,\mu^m)$ is a strictly convex smooth function defined on $\mathring{\Delta}$. Conversely, for any smooth strictly convex function $\tau$ on $\mathring{\Delta}$, formula \eqref{FormeAntidiagoKähler} defines an element of $K_{\omega}^{\mathbb{T}}(\mathring{M})$.
\end{thm}

For this reason, the function $\tau$ is often referred to as the {\em symplectic potential} of $J$ in the literature \cite{donaldson:2}. In \cite{abreu:2}, M.~Abreu discovered that the scalar curvature $u_J$ of the Riemannian metric associated to $J\in K_{\omega}^{\mathbb{T}}(M)$ is given by the formula
\begin{equation}\label{AbreuFormula}
u_J=-\sum\limits_{i,j=1}^m \frac{\partial^2\tau^{ij}}{\partial\mu^i\partial\mu^j}.
\end{equation}
Here, $(\tau^{ij}) = (\mathrm{Hess}(\tau)^{-1})_{ij}$. Equation \eqref{AbreuFormula} is commonly known as {\em Abreu's formula}. S.~K.~Donaldson \cite{donaldson:2} observed that Theorem \ref{Theorem 1} combined with the description \eqref{FormeAntidiagoKähler} of elements in $K_{\omega}^{\mathbb{T}}(M)$ gives an alternative way for deriving \eqref{AbreuFormula}, by directly showing that \eqref{AbreuFormula} computes the moment map for the action of $\mathrm{Ham}^{\mathbb{T}}(M,\omega)$ on $K_{\omega}^{\mathbb{T}}(M)$. This observation suggested a similar form for the Hermitian scalar curvature of elements in $AK_{\omega}^{\mathbb{T}}(M)$ which has been checked directly by M.~Lejmi \cite{lejmi:1}.

(3) The third ingredient is the notion of generalized Kähler structure of symplectic type and their realization as $\omega$-tamed complex structures. Recall that a generalized almost complex structure on a smooth $2m$-manifold $M$ is a complex structures $\mathcal{J}$ on the vector bundle $TM \oplus T^*M$ which is orthogonal with respect to the natural inner product $\langle X\oplus\xi, Y\oplus\eta\rangle = \frac{1}{2}(\xi(Y)+\eta(X))$. A generalized complex structure is a generalized almost complex structure satisfying the integrability condition
$$
[\mathcal{J}U,\mathcal{J}V]_C-\mathcal{J}[\mathcal{J}U,V]_C-\mathcal{J}[U,\mathcal{J}V]_C-[U,V]_C=0
$$
with respect to the Courant bracket
$$
[X\oplus\xi,Y\oplus\eta]_C = [X,Y] + \mathcal{L}_X\eta -\mathcal{L}_Y\xi -\frac{1}{2}d(\iota_X\eta - \iota_Y\xi).
$$
Denote by $GAC(M)$ and $GC(M)$ the sets of generalized almost complex and generalized complex structures on $M$ respectively. For example \cite{gualtieri:1}, if $\omega$ is a symplectic form on $M$, then $\mathcal{J}_{\omega}:X\oplus\xi \mapsto -\omega^{-1}(\xi)\oplus \omega(X)$ defines an element of $GC(M)$. Following \cite{gualtieri:1}, a generalized almost Kähler structure on $M$ is defined as a pair $(\mathcal{J}_1,\mathcal{J}_2)$ of elements of $GAC(M)$ such that
\begin{itemize}
 \item[(1)] $\mathcal{J}_1\mathcal{J}_2 = \mathcal{J}_2\mathcal{J}_1$
 \item[(2)] $\langle -\mathcal{J}_1\mathcal{J}_2\cdot,\cdot\rangle >0.$
\end{itemize}
On a symplectic manifold $(M,\omega)$, we thus introduce the spaces $GAK_{\omega}(M)$, $GK_{\omega}(M)$ of generalized almost Kähler (resp. generalized Kähler) structures of symplectic type. These are defined by
$$
GAK_{\omega}(M)=\{\mathcal{J}\in GAC(M) \ | \ \mathcal{J}_{\omega}\mathcal{J}=\mathcal{J}\mathcal{J}_{\omega}, \langle-\mathcal{J}_{\omega}\mathcal{J}\cdot,\cdot\rangle>0\},
$$
$$
GK_{\omega}(M)=GAK_{\omega}(M)\cap GC(M).
$$
As a trivial example, if $(J,\omega)$ is a genuine Kähler structure on $M$, then $\mathcal{J}_J\in GK_{\omega}(M)$ where $\mathcal{J}_J$ is the generalized complex structure associated to $J$ by $\mathcal{J}_J:X\oplus\xi\mapsto JX\oplus J\xi$.

One can endow the space $GAK_{\omega}(M)$ with a formal Kähler structure such that $\mathrm{Ham}(M,\omega)$ acts symplectically on it. Thus, a moment map for this action, if it exists, could be interpreted as a scalar curvature by virtue of Theorem \ref{Theorem 1}. In order to compute this moment map, we specialize to the case of a compact symplectic toric manifold $(M,\omega,\mathbb{T})$ with moment map $\mu:M\rightarrow\Delta\subset\mathfrak{t}^*$ and Delzant polytope $\Delta$. Let $GAK^{\mathbb{T}}_\omega(M)$ denote the $\mathbb{T}$-invariant elements of $GAK_\omega(M)$.

Following Donaldson's argument in \cite{donaldson:2}, we compute the moment map and obtain a generalization of Abreu's formula as follows. 

\begin{thm}[cf. Theorem \ref{ThmMomapActionRéduite}]\label{ThmMomapActionRéduite}
Denote by $C^{\infty}_{c,0}(M)^{\mathbb{T}}$ the set of $\mathbb{T}$-invariant functions with zero mean supported in $\mathring{M}$ and by $\mathrm{Ham}_c^{\mathbb{T}}(M,\omega)$ the subgroup of hamiltonian diffeomorphisms that it generates. The action of $\mathrm{Ham}_c^{\mathbb{T}}(M,\omega)$ on $GAK_{\omega}^{\mathbb{T}}(M)$ is hamiltonian with moment map $\nu:GAK^{\mathbb{T}}_\omega(M)\rightarrow (C^{\infty}_{c,0}(M)^{\mathbb{T}})^*$ given by
\begin{equation}\label{ExprAlt}
\nu^f(\mathcal{J})=-\int_{\mathring{M}}f\left(\sum\limits_{i,j=1}^m \frac{\partial^2\tilde{Q}_{ij}}{\partial \mu^i\partial\mu^j}\right)\frac{\omega^m}{m!},
\end{equation}
where $\tilde{Q}_{ij}= \omega\left(\frac{\partial}{\partial t^i},A\frac{\partial}{\partial t^j}\right)$ and $A$ is the $\mathrm{End}(TM)$-part of $\mathcal{J}$.
\end{thm}

In light of this result, we are led to define the {\em generalized Hermitian scalar curvature} of $\mathcal{J}\in GAK^{\mathbb{T}}_\omega(M)$ to be the function
$$
u_{GK}(\mathcal{J})=\sum\limits_{i,j=1}^m \frac{\partial^2\tilde{Q}_{ij}}{\partial \mu^i\partial\mu^j}.
$$

We further investigate this formula when $\mathcal{J}$ is restricted to a certain class $DGK_{\omega}^{\mathbb{T}}(M)$ of generalized Kähler metrics such that $K_{\omega}^{\mathbb{T}}(M) \subsetneq DGK_{\omega}^{\mathbb{T}}(M)\subsetneq GK_{\omega}^{\mathbb{T}}(M)$ (cf. Section \ref{SectionPotentiel} for the precise definition). For this class, we prove the following generalization of Theorem \ref{ThmGuillemin}:

\begin{thm}[cf. Theorem \ref{PropPrincipale}]\label{ThmGuilleminGénéralisé}
For any $\mathcal{J}\in DGK_{\omega}^{\mathbb{T}}(M)$ and any choice of basis $(\xi_1,\ldots,\xi_m)$ of $\mathfrak{t}$, there exist momentum-angle coordinates $(\mu^j,t^j)$ on $\mathring{M}$ such that 
$$
\omega =  \sum\limits_{j=1}^md\mu^j\wedge dt^j
$$
and $\mathcal{J}$ is determined by an $(m\times m)$-matrix-valued smooth function $\Psi$ of the form 
$$
\Psi_{jk} = \frac{\partial^2\tau}{\partial\mu^j\partial\mu^k}+C_{jk}
$$
for a smooth strictly convex function $\tau$ on $\mathring{\Delta}$ and a (constant) antisymmetric matrix $C$ (cf. Section \ref{SectionPotentiel} for details). Conversely, to any antisymmetric matrix $C$ and smooth strictly convex function $\tau$ on $\mathring{\Delta}$, there corresponds an element $\mathring{\mathcal{J}}\in DGK_{\omega}^{\mathbb{T}}(\mathring{M})$.
\end{thm}

Besides this, the class $DGK_{\omega}(M)$ is interesting in its own right in the context of generalized Kähler geometry because of the following compactification theorem:

\begin{thm}[cf. Theorem \ref{ThmC123}]\label{ThmCompactIntro}
Consider $\mathcal{J}\in DGK^{\mathbb{T}}_{\omega}(M)$ corresponding to a matrix $\Psi$ in the sense of Theorem \ref{ThmGuilleminGénéralisé} and $\mathring{\mathcal{J}}\in DGK^{\mathbb{T}}_\omega(\mathring{M})$ corresponding to a matrix $\mathring{\Psi}$ with respect to the $(\mu^j,t^j)$ coordinates associated with $\mathcal{J}$. 
If 
\begin{itemize}
 \item[(C1)] $\mathring{\Psi}-\Psi$ admits a smooth extension to $\Delta$;
 \item[(C2)] $\Psi^T\mathring{\Psi}^{-1}\Psi-\Psi^T$ admits a smooth extension to $\Delta$;
 \item[(C3)] $\beta+\sum\limits_{i,j=1}^m(\mathring{\Psi}-\Psi)_{ij}d\mu^i\otimes d\mu^j+(\Psi^T\mathring{\Psi}^{-1}\Psi-\Psi^T)_{ij}(Jd\mu^i)\otimes(J d\mu^j)$ is positive definite on $M\backslash \mathring{M}$;
\end{itemize}
then $\mathring{\mathcal{J}}$ is the restriction of an element of $DGK^{\mathbb{T}}_\omega(M)$.
\end{thm}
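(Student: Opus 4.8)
The plan is to obtain the extension by comparing $\mathring{\mathcal{J}}$ with the reference structure $\mathcal{J}$, which is already globally defined on $M$, and to reduce every global regularity and positivity requirement to the three stated conditions. First I would write out explicitly, in the momentum-angle chart $(\mu^j,t^j)$ associated with $\mathcal{J}$, the block form on $TM\oplus T^*M$ of a generalized almost complex structure of symplectic type determined by its matrix in the sense of Theorem~\ref{PropPrincipale}; in particular both $\mathcal{J}$ (via $\Psi$) and $\mathring{\mathcal{J}}$ (via $\mathring{\Psi}$) are presented over $\mathring{M}$ in the singular frame $\{\partial_{\mu^j},\partial_{t^j},d\mu^j,dt^j\}$. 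This frame degenerates along $M\backslash\mathring{M}$ (the covectors $d\mu^j$ and the vectors $\partial_{\mu^j}$ blow up while the $\partial_{t^j}$ collapse), so "extending to $M$" means precisely that the tensor, once rewritten in a genuinely smooth frame adapted to the boundary strata of $\Delta$, has smooth and bounded coefficients.

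The central computation is to expand the positive tensor $\langle -\mathcal{J}_{\omega}\mathring{\mathcal{J}}\cdot,\cdot\rangle$, together with the endomorphism blocks $A$ of $\mathring{\mathcal{J}}$, as the corresponding data for $\mathcal{J}$ plus explicit correction terms. I expect the $TM$-part of the correction to be governed exactly by $\mathring{\Psi}-\Psi$, producing the term $\sum(\mathring{\Psi}-\Psi)_{ij}\,d\mu^i\otimes d\mu^j$, and the $T^*M$-part to be governed by $\Psi^T\mathring{\Psi}^{-1}\Psi-\Psi^T$, producing (after substituting the coordinate expression for $Jd\mu^i$ in terms of the $dt^k$) the term $\sum(\Psi^T\mathring{\Psi}^{-1}\Psi-\Psi^T)_{ij}\,(Jd\mu^i)\otimes(Jd\mu^j)$, while the remaining piece is the tensor $\beta$ attached to $\mathcal{J}$ itself, which extends smoothly because $\mathcal{J}\in DGK^{\mathbb{T}}_{\omega}(M)$. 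Granting this decomposition, condition (C1) supplies a smooth extension to $\Delta$ of the $d\mu\otimes d\mu$ coefficients and condition (C2) of the $dt\otimes dt$ coefficients; combined with the a priori smoothness of $\mathcal{J}$, this shows that every block of $\mathring{\mathcal{J}}$ admits a smooth $\mathbb{T}$-invariant extension across $M\backslash\mathring{M}$, so that $\mathring{\mathcal{J}}$ is the restriction of a smooth generalized almost complex structure $\mathcal{J}'\in GAC(M)$ of symplectic type.

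It then remains to upgrade $\mathcal{J}'$ to an element of $DGK^{\mathbb{T}}_{\omega}(M)$. Positivity $\langle -\mathcal{J}_{\omega}\mathcal{J}'\cdot,\cdot\rangle>0$ holds on $\mathring{M}$ because $\mathring{\mathcal{J}}\in GAK_{\omega}(\mathring{M})$, and by the decomposition above this tensor is precisely the expression appearing in (C3); hence (C3) gives positivity on $M\backslash\mathring{M}$, and positivity then holds on all of $M$ by continuity, so $\mathcal{J}'\in GAK^{\mathbb{T}}_{\omega}(M)$. Integrability is a closed condition: the generalized Nijenhuis tensor of $\mathcal{J}'$ depends continuously on $\mathcal{J}'$ and vanishes on the dense open set $\mathring{M}$, where $\mathcal{J}'=\mathring{\mathcal{J}}\in GC(\mathring{M})$, so it vanishes on $M$ and $\mathcal{J}'\in GK^{\mathbb{T}}_{\omega}(M)$. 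Finally, membership in the subclass $DGK^{\mathbb{T}}_{\omega}(M)$ is inherited from $\mathring{M}$, where $\mathcal{J}'$ is already presented in the defining form $\Psi=\mathrm{Hess}(\tau)+C$, with the same strictly convex $\tau$ and the same antisymmetric matrix $C$ as $\mathring{\mathcal{J}}$.

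The main obstacle I anticipate lies in the second paragraph: establishing the precise tensorial decomposition and verifying that the singular behavior of the coordinate frame is absorbed entirely by the reference structure $\mathcal{J}$, so that smoothness of $\mathring{\mathcal{J}}$ reduces exactly to (C1) and (C2) rather than to some weaker or stronger conditions. This is the relative analogue of checking the Guillemin--Abreu boundary conditions, and the bookkeeping of how the factors $\Psi$ and $\mathring{\Psi}^{-1}$ weight the degenerating fields $\partial_{t^j}$ and the blowing-up covectors $d\mu^j$ is where the genuine work resides.
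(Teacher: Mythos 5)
Your proposal is correct and follows essentially the same route as the paper: the proof of Theorem \ref{ThmC123} rests on exactly the decomposition you predict, $\mathring{\beta}-\beta=\sum_{i,j=1}^m(\mathring{\Psi}-\Psi)_{ij}\,d\mu^i\otimes d\mu^j+\sum_{k,l=1}^m(\Psi^T\mathring{\Psi}^{-1}\Psi-\Psi^T)_{kl}\,(Jd\mu^k)\otimes(Jd\mu^l)$ (obtained from $Jdt^i=-\sum_j\Psi_{ij}d\mu^j$), followed by (C1)--(C2) for the smooth extension, continuity on the dense set $\mathring{M}$ for $\mathring{J}^2=-\mathrm{Id}$, integrability and semi-definiteness, and (C3) for tameness on $M\backslash\mathring{M}$. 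One small correction to your anticipated ``main obstacle'': no rewriting in an adapted boundary frame is needed, because $d\mu^i=\omega(K_i,\cdot)$ and $Jd\mu^i$ are globally smooth $\mathbb{T}$-invariant 1-forms on $M$ (they degenerate by losing rank along $M\backslash\mathring{M}$ rather than blowing up), so smoothness of the two coefficient matrices on $\Delta$ immediately makes the correction a smooth global tensor.
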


\begin{cor}[cf. Corollary \ref{CorDéformation}]\label{CorollaireDeDéformation}
Let $J_0\in K_{\omega}^{\mathbb{T}}(M)$ be an $\omega$-compatible complex structure of the form \eqref{FormeAntidiagoKähler}. Given an antisymmetric matrix $C$, define a family of matrix-valued functions $\mathring{\Psi}(t)$ ($t\in \mathbb{R}$) on $\mathring{\Delta}$ by 
$$
\mathring{\Psi}_{jk}(t)=\frac{\partial^2\tau}{\partial\mu^j\partial\mu^k}+tC_{jk},
$$
and let $\mathring{\mathcal{J}}_t\in DGK^{\mathbb{T}}_\omega(\mathring{M})$ be the corresponding family of generalized complex structures (in the sense of Theorem \ref{ThmGuilleminGénéralisé}). For sufficiently small values of $|t|$, the family $\mathring{\mathcal{J}}_t$ is the restriction to $\mathring{M}$ of a family $\mathcal{J}_t\in DGK_\omega^{\mathbb{T}}(M)$. 
\end{cor} 

This manner of deforming a Kähler structure into a generalized Kähler can be viewed as an explicit realization of a recent unobstructedness theorem of R.~Goto \cite{goto:1, goto:2}, where the matrix $C$ corresponds to choosing a holomorphic Poisson structure $\sigma$ in the setting of \cite{goto:1} (See Proposition \ref{propGoto}).

Using our newly found notion of generalized Hermitian scalar curvature, we generalize E.~Calabi's notion of extremal Kähler metrics, calling {\em extremal} any element $\mathcal{J}$ of $GAK_{\omega}^{\mathbb{T}}(M)$ which is a critical point of the functional $\mathcal{J}\mapsto \int_Mu_{GK}(\mathcal{J})^2\frac{\omega^m}{m!}$. We deduce, as it is done in \cite{abreu:1} in the Kähler setting, that $\mathcal{J}\in DGK_{\omega}^{\mathbb{T}}(M)$ is extremal if and only if $u_{GK}(\mathcal{J})$ is an affine function of the momenta. This, and Corollary \ref{CorollaireDeDéformation}, provide examples of extremal strictly generalized Kähler metrics obtained as deformations of extremal Kähler toric varieties. See \cite{donaldson:2,donaldson:3,chen:1,zhou:1} for a general theory.

In the case of a compact symplectic toric manifold of dimension 4, we are able to prove in Theorem \ref{PropCompactificationDim4} that the compactification conditions (C1), (C2) of Theorem \ref{ThmCompactIntro} are actually necessary. In the 4-dimensional context, we also derive a closed form expression for the generalized Hermitian scalar curvature of elements in $DGK_{\omega}^{\mathbb{T}}(M)$ in terms of the classical scalar curvature (cf. Corollary \ref{CoroS_GK}). This result confirms the form of the generalized scalar curvature suggested in \cite{coimbra:1} and gives an exact value to the dilaton $\phi$ in terms of the angle between the complex structures of the underlying Hermitian structures.

\vspace{5mm}

{\em Acknowledgements.} This present paper is based on material originally from my PhD thesis. I wish to thank my supervisor Vestislav Apostolov for sharing his time and ideas so generously. I also thank Paul Gauchon for accepting to share some of his personal notes with me and Marco Gualtieri whose suggestions have helped to better the presentation of this paper.

\section{Generalized Kähler structures of symplectic type}\label{section3}
In this section, we introduce the notion of generalized almost Kähler structure of symplectic type which is the main object of the paper. We provide three characterizations of these structures which will be used throughout this paper depending on the situation. We shall also define a formal symplectic structure on the space of generalized almost Kähler structures with respect to which the action of the group of hamiltonian diffeomorphisms is symplectic.

Recall that \cite{hitchin:3} a {\em generalized complex structure} on a smooth manifold $M$ is a complex structure $\mathcal{J}$ on the vector bundle $TM\oplus T^*M$ which is orthogonal with respect to the natural inner product $\langle X\oplus\xi, Y\oplus\eta\rangle = \frac{1}{2}(\xi(Y)+\eta(X))$ and which satisfies the integrability condition 
$$
[\mathcal{J}U,\mathcal{J}V]_C-\mathcal{J}[\mathcal{J}U,V]_C-\mathcal{J}[U,\mathcal{J}V]_C-[U,V]_C=0
$$
with respect to the {\em Courant bracket} 
$$
[X\oplus\xi,Y\oplus\eta]_C = [X,Y] + \mathcal{L}_X\eta -\mathcal{L}_Y\xi -\frac{1}{2}d(\iota_X\eta - \iota_Y\xi).
$$
If the integrability condition is omited, we refer to $\mathcal{J}$ as a generalized {\em almost} complex structure. For instance, if $\omega$ is a non-degenerate 2-form and $J$ is an almost complex structure, then the endomorphisms of $TM\oplus T^*M$ 
\begin{flalign*}
& \mathcal{J}_{\omega}:X\oplus\xi \mapsto -\omega^{-1}(\xi)\oplus \omega(X),\\
& \mathcal{J}_{J}:X\oplus\xi \mapsto JX\oplus J\xi,
\end{flalign*}
are generalized almost complex structures. The integrability of $\mathcal{J}_J$ is equivalent to the usual integrability of $J$, while the integrability of $\mathcal{J}_{\omega}$ is equivalent to $d\omega=0$. A pair $(\mathcal{J}_1,\mathcal{J}_2)$ of generalized almost complex structures such that $\mathcal{J}_1\circ \mathcal{J}_{2} = \mathcal{J}_{2}\circ\mathcal{J}_1$ and the bilinear form $\langle-\mathcal{J}_1\mathcal{J}_2\cdot,\cdot\rangle$ is positive definite is called a {\em generalized almost Kähler structure}. It is a {\em generalized Kähler structure} provided both $\mathcal{J}_{1}$ and $\mathcal{J}_{2}$ are integrable. As a trivial example, if $(\omega, J)$ is a genuine Kähler structure on $M$, then $(\mathcal{J}_{\omega},\mathcal{J}_J)$ is generalized Kähler.

\begin{rem}
{\rm
The structure group of a generalized almost Kähler structure is $U(m)\times U(m)\subset U(m,m)$ which is maximal compact (cf. \cite{gualtieri:1}).
}
\end{rem}

It turns out \cite{gualtieri:1} that a generalized almost Kähler structure $(\mathcal{J}_1,\mathcal{J}_2)$ on $M$ is equivalent to the data $(J_+,J_-,g,b)$ of a Riemannian metric $g$, a 2-form $b$ and two $g$-compatible almost complex structures $J_+,J_-$. 

Indeed, the involution $-\mathcal{J}_1\mathcal{J}_2$ induces a splitting of $TM\oplus T^*M$ into its ($\pm1$)-eigenbundles $C_{\pm}$. The bilinear form $\langle\cdot,\cdot\rangle$ is then positive definite on $C_+$ and negative definite on $C_-$. On the one hand this implies that $C_{\pm}$ are both of dimension $2m$, and on the other that $C_{\pm}\cap TM = C_\pm\cap T^*M=0$ (since $TM$ and $T^*M$ are isotropic in $TM\oplus T^*M$). It follows that $C_+$ is the graph of a map $TM\rightarrow T^*M$ whose symetric and antisymetric parts we denote by $g$ and $b$ respectively. Similarly $C_-$ is the graph of $b-g$ and we have isomorphisms $\iota_{\pm}:TM\rightarrow C_\pm:X\mapsto (X, \iota_X(b\pm g))$. The generalized almost complex structrures $\mathcal{J}_1,\mathcal{J}_2$ preserve $C_{\pm}$ and so we may use $\iota_{\pm}$ to transfer them to almost complex structures $J_{\pm}$ on $TM$:
\begin{equation}\label{eqGualtieri2}
\begin{aligned}
J_+:=& \iota_+^{-1}\circ \mathcal{J}_1\circ \iota_+ = \iota_+^{-1}\circ \mathcal{J}_2\circ \iota_+, \\
J_- :=& \iota_-^{-1}\circ \mathcal{J}_1\circ \iota_- = -\iota_-^{-1}\circ \mathcal{J}_2\circ \iota_-.
\end{aligned}
\end{equation}
In fact, if $\iota_+$ is used to transfer $\langle\cdot,\cdot\rangle|_{C_+}$ on $TM$, we obtain precisely $g$. It follows that the pairs $(J_{\pm},g)$ are almost Hermitian structures. Explicitely, the generalized almost Kähler structure $(\mathcal{J}_1,\mathcal{J}_2)$ is given in terms of $(J_+,J_-,g,b)$ by
Explicitely, 
\begin{equation}\label{eqGualtieri}
\begin{aligned}
\mathcal{J}_1=\frac{1}{2}
e^b
\left( \begin{array}{cc}
J_+ + J_- & -(F_+^{-1}-F_-^{-1}) \\
F_+ - F_- & J_+^*+J_-^*
\end{array} \right)
e^{-b}, \\
\mathcal{J}_2=\frac{1}{2}
e^b
\left( \begin{array}{cc}
J_+ - J_- & -(F_+^{-1}+F_-^{-1}) \\
F_+ + F_- & J_+^*-J_-^*
\end{array} \right)
e^{-b}.
\end{aligned}
\end{equation}
Here, $F_{\pm}=g(J_{\pm}\cdot,\cdot)$ are the fundamental 2-forms of the Hermitian structures $(J_{\pm},g)$ and $e^b$ is the automorphisms of $TM\oplus T^*M$ given by $X\oplus\xi\mapsto X\oplus b(X)+\xi$. The integrability of $(\mathcal{J}_1,\mathcal{J}_2)$ is then equivalent to the integrability of $J_+$ and $J_-$ together with the relation
\begin{equation}\label{IntégrabilitéGualtieri}
d^c_{\pm}F_{\pm}=\mp db,
\end{equation}
where $d^c_{\pm}$ is the operator $J_{\pm}dJ^{-1}_{\pm}$ for the action of $J_{\pm}$ on $p$-forms by $J_{\pm}\psi=(-1)^p\psi(J_{\pm}\cdot,\ldots,J_{\pm}\cdot)$.

\begin{déf}
Given a symplectic form $\omega$ on $M$, denote by $GAK_{\omega}(M)$ the space of almost complex structures $\mathcal{J}$ such that $(\mathcal{J}_{\omega},\mathcal{J})$ is a generalized almost Kähler structure. We shall refer to the elements of $GAK_{\omega}(M)$ as {\bf generalized almost Kähler structures of symplectic type}. The set of {\em integrable} elements of $GAK_{\omega}(M)$ will be denoted by $GK_{\omega}(M)$.
\end{déf}

\begin{rem}
{\rm
If $\mathcal{J}\in GK_{\omega}(M)$, then $(\mathcal{J}_{\omega}, \mathcal{J})$ is a generalized Kähler structure since $\omega$ is symplectic.
}
\end{rem}

Recall that an almost complex structure $J$ is called {\em $\omega$-tamed} if the bilinear form $\omega(\cdot,J\cdot)$ is positive definite. Let us denote $AC_+(M,\omega)$ the set of all $\omega$-tamed almost complex structures on $M$. The following is well known (see for instance \cite{enrietti:1}):

\begin{prop}\label{PropEnrietti}
The correspondence $GAK_{\omega}(M)\rightarrow AC_+(M,\omega):\mathcal{J}\mapsto J_+$ given by \eqref{eqGualtieri2} is bijective. The inverse map is $J\mapsto (J_+,J_-,g,b)$, where
$$
J_+=J,\quad J_-=J^{*_{\omega}},\quad g=-\frac{1}{2}\omega (J - J^{*_{\omega}}),\quad b=-\frac{1}{2}\omega (J + J^{*_{\omega}}),
$$
for $J^{*_{\omega}}=-\omega^{-1}J^*\omega$ the {\em symplectic adjoint} of $J$. Moreover, $\mathcal{J}\in GAK_{\omega}(M)$ is integrable if and only if $J_+$ and $J_+^{*_\omega}$ are integrable.
\end{prop}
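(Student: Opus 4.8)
The plan is to establish the bijection by exhibiting the two maps explicitly and verifying they are mutually inverse, using the dictionary \eqref{eqGualtieri2}--\eqref{eqGualtieri} between $(\mathcal{J}_1,\mathcal{J}_2)$ and $(J_+,J_-,g,b)$ together with the fact that $\mathcal{J}_1$ is now fixed to be $\mathcal{J}_\omega$. First I would unwind what the commutation and positivity conditions defining $GAK_\omega(M)$ mean once $\mathcal{J}_1=\mathcal{J}_\omega$. Setting $\mathcal{J}_1=\mathcal{J}_\omega$ in the first line of \eqref{eqGualtieri} forces the endomorphism part and the various blocks to match the block form of $\mathcal{J}_\omega=\left(\begin{smallmatrix} 0 & -\omega^{-1}\\ \omega & 0\end{smallmatrix}\right)$. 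The diagonal blocks of $\mathcal{J}_\omega$ vanish, so comparing with \eqref{eqGualtieri} (after conjugating by $e^{-b}$, which only shifts by $b$-dependent terms) gives $J_++J_-=0$ on the relevant block, i.e. $J_-=-J_+$ up to the symplectic bookkeeping; more precisely it pins down $J_-$ as the symplectic adjoint of $J_+=:J$. This is where the formula $J_-=J^{*_\omega}=-\omega^{-1}J^*\omega$ should drop out, and I expect the $b$-term to be determined by matching the off-diagonal blocks, yielding $b=-\tfrac12\omega(J+J^{*_\omega})$.

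Next I would verify the positivity condition. The requirement $\langle -\mathcal{J}_\omega\mathcal{J}\cdot,\cdot\rangle>0$ should translate, under the correspondence, into positive-definiteness of the metric $g$ built from $J_\pm$; and since $g=\tfrac12\langle\cdot,\cdot\rangle|_{C_+}$ transferred via $\iota_+$, I would show this is equivalent to $\omega(\cdot,J\cdot)>0$, i.e. to $J$ being $\omega$-tamed. Concretely, with $J_-=J^{*_\omega}$ one computes $g=-\tfrac12\omega(J-J^{*_\omega})$ and checks $g(X,X)=\omega(X,JX)$ after using the definition of the symplectic adjoint; positivity of $g$ is then exactly the taming condition. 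This identifies the image of the forward map $\mathcal{J}\mapsto J_+$ with $AC_+(M,\omega)$.

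To close the bijection I would run the construction in reverse: starting from any $J\in AC_+(M,\omega)$, define $J_-,g,b$ by the displayed formulas, plug into \eqref{eqGualtieri} to build a pair $(\mathcal{J}_1,\mathcal{J}_2)$, and check that $\mathcal{J}_1$ thus produced is genuinely $\mathcal{J}_\omega$ and that $(\mathcal{J}_\omega,\mathcal{J}_2)$ satisfies the two axioms of a generalized almost K\"ahler structure. One must confirm that $(J,J^{*_\omega},g,b)$ is an admissible Gualtieri quadruple, i.e. that $g$ is a metric compatible with both $J_+$ and $J_-$; compatibility of $g$ with $J=J_+$ follows from the taming of $J$, and compatibility with $J^{*_\omega}$ follows from the algebraic identity $(J^{*_\omega})^*g = g\,J^{*_\omega}$, which I would derive directly from $J^{*_\omega}=-\omega^{-1}J^*\omega$ and the symmetry/antisymmetry of $g$ and $\omega$. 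That the two maps invert each other is then a matter of substituting one set of formulas into the other.

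Finally, for the integrability clause I would invoke the established equivalence \eqref{Int�grabilit�Gualtieri}: $(\mathcal{J}_\omega,\mathcal{J})$ is integrable iff $J_+=J$ and $J_-=J^{*_\omega}$ are both integrable and the relation $d^c_\pm F_\pm=\mp db$ holds. The main obstacle I anticipate is precisely this last point: one must check that the $H$-flux/$b$-field relation \eqref{Int�grabilit�Gualtieri} is automatically satisfied once $J$ and $J^{*_\omega}$ are integrable, so that integrability of the generalized structure reduces to integrability of the two almost complex structures alone. The key observation is that $\mathcal{J}_\omega$ is already integrable (since $d\omega=0$), so the generalized integrability condition decouples: the integrability of the pair is governed entirely by $\mathcal{J}=\mathcal{J}_2$, whose integrability in turn is equivalent to the integrability of $J_\pm$ given that $\omega$ is closed. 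I would make this decoupling precise by showing that with $b=-\tfrac12\omega(J+J^{*_\omega})$ one has $db$ expressible through $F_\pm$ in just the way \eqref{Int�grabilit�Gualtieri} demands whenever $J,J^{*_\omega}$ are integrable, so no extra condition survives. This yields the stated criterion that $\mathcal{J}\in GAK_\omega(M)$ is integrable iff $J_+$ and $J_+^{*_\omega}$ are integrable.
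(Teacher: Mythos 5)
The paper offers no proof of Proposition \ref{PropEnrietti} at all --- it is quoted as ``well known'' with a pointer to \cite{enrietti:1} --- so your proposal has to stand on its own, and in substance it reconstructs the Enrietti--Fino--Grantcharov argument. The algebraic half of your plan is correct and routine: conjugating $\mathcal{J}_\omega$ by $e^{-b}$ and matching blocks against \eqref{eqGualtieri} yields $b=-\tfrac12\omega(J_++J_-)$ and $g=-\tfrac12\omega(J_+-J_-)$, and the identity $J_-=J_+^{*_\omega}$ drops out precisely from the requirement that $g$ be symmetric and $b$ antisymmetric: since $\omega A$ is symmetric iff $A^{*_\omega}=-A$ and antisymmetric iff $A^{*_\omega}=A$, the two conditions combine to give $2J_+^{*_\omega}=2J_-$. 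Positivity of $g$ is then exactly the taming condition, as you say. One small slip: compatibility of $g$ with $J_\pm$ (i.e. $g(J_\pm\cdot,J_\pm\cdot)=g$) is a purely algebraic consequence of these formulas and of $(J^{*_\omega})^2=-\mathrm{Id}$, not a consequence of taming --- taming is needed only for positive-definiteness.

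The only place where your proposal falls short of a proof is the step you yourself flag as the main obstacle: the claim that the torsion condition $d^c_\pm F_\pm=\mp db$ is automatic for this particular $b$. Your ``decoupling'' remark is correct as far as it goes --- since $d\omega=0$ makes $\mathcal{J}_\omega$ integrable, Gualtieri's criterion reduces integrability of $\mathcal{J}$ to integrability of $J_\pm$ \emph{plus} the torsion identity --- but that reduction leaves the torsion identity as the entire nontrivial content of the proposition, and you only promise to verify it. It is true, and it goes through exactly along the lines you sketch: with respect to the integrable $J_+$, one computes that $F_+=\tfrac12\left(\omega+\omega(J_+\cdot,J_+\cdot)\right)$ is the $J_+$-invariant part of $\omega$, while the $J_+$-anti-invariant part equals $b(\cdot,J_+\cdot)$; writing the latter as $\sigma+\overline{\sigma}$ with $\sigma$ of type $(2,0)$, one finds $b=\pm i(\sigma-\overline{\sigma})$ (sign depending on conventions), and the $(3,0)$ and $(2,1)$ components of $d\omega=0$ give $\partial\sigma=0$ and $\bar\partial\sigma=-\partial F_+$, whence $db=i(\partial F_+-\bar\partial F_+)=-d^c_+F_+$. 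The computation for $J_-=J_+^{*_\omega}$ is identical except that the $J_-$-invariant part of $\omega$ is $-F_-$, which produces the opposite sign, so no condition beyond integrability of $J_\pm$ survives. With this computation supplied, your plan is a complete and correct proof of the proposition, essentially the one in the cited reference.
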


Note that in this context, the Kähler case corresponds to taking $J$ integrable and $\omega$-compatible (in which case $J_-=-J_+$).

The material in the remainder of this section is adapted from unpublished notes of P. Gauduchon \cite{gauduchon:1}. Let $(M,\omega)$ be a {\em compact} symplectic manifold of real dimension $2m$. We denote by $v_\omega=\omega^m\slash m!$ the symplectic volume form. It is straightforward to check that $\mathcal{J}\in GAK_{\omega}(M)$ if and only if $\mathcal{J}$ is of the form
$$
\mathcal{J}=
\left(
\begin{array}{cc}
A & B\omega^{-1} \\
-\omega B & A^*
\end{array}
\right),
$$
where $A,B$ are endomorphisms of $TM$ satisfying
\begin{equation} \label{conditionsAB}
\begin{aligned}
 A^2-B^2=-\mathrm{Id},\\
 AB+BA=0, \\
 A^{*_\omega}=-A, \\
 B^{*_\omega}=B,
\end{aligned}
\end{equation}
as well as the positivity relation
\begin{equation}\label{positivitéAB}
\omega(X,AX)+\omega(Y,AY)+2\omega(BX,Y)>0 \quad \forall X,Y\in TM.
\end{equation}
In terms of the corresponding $J\in AC_+(M,\omega)$, we have
\begin{equation}\label{J-AB}
A = -2(J-J^{*_\omega})^{-1}, \quad B= -(J+J^{*_\omega})(J-J^{*_\omega})^{-1}.
\end{equation}
Equations \eqref{conditionsAB} suggests a complex description of the situation. Indeed, if we define an endomorphism $K=A+iB$ of $T^{\mathbb{C}}M=TM\otimes \mathbb{C}$, then the first two equations are equivalent to $K^2=-\mathrm{Id}$, while the other two are equivalent to $K^{*_\omega}=-\overline{K}$. To express the positivity condition, it is natural to introduce the (non-degenerate anti-Hermitian) bilinear form $H(U,V)=\omega(U,\overline{V})$. Indeed, one may easily verify that \eqref{positivitéAB} is then equivalent to positivity of $H_K=H(\cdot,K\cdot)$. Viewing $GAK_{\omega}(M)$ as the set of such complex endomorphisms, we endow it with the structure of a Fréchet manifold with a formal symplectic structure in a manner analogous to \cite{gauduchon:2}. Indeed, the tangent space at $K$ is given by
\begin{equation}\label{GAK-tangent}
T_K(GAK_\omega(M))= \{\dot{K}\in C^{\infty}(\mathrm{End}(T^{\mathbb{C}}M)) \ | \ \dot{K}^{*_{\omega}}=-\dot{\overline{K}}, \ \dot{K}K+K\dot{K}=0\},
\end{equation}
and the symplectic form is
$$
\Omega_K(\dot{K}_1,\dot{K}_2)=\frac{1}{2}\int_M\mathrm{tr}(K\dot{K}_1\dot{K}_2)v_{\omega}.
$$
\begin{rem}
{\rm
\begin{itemize}
 \item [(1)] It is straightfoward to check that for any $K\in GAK_\omega(M)$ and $\dot{K} \in T_K(GAK_\omega(M))$, we have $K\dot{K}\in T_K(GAK_\omega(M))$. Using this and the fact that the elements of $T_K(GAK_\omega(M))$ are symmetric with respect to the Hermitian scalar product $H_K$, we see that $\Omega$ is indeed real and positive definite. In fact, if we define a formal almost complex structure by $\mathbb{K}_K\dot{K}:=K\dot{K}$, it can be shown that the pair $(\Omega,\mathbb{K})$ defines a formal Kähler structure on $GAK_{\omega}(M)$.
\item[(2)] Note that $GAK_\omega(M)$ naturally contains the set $AK_\omega(M)$ of almost Kähler structures as a symplectic submanifold by considering the {\em real} elements of $GAK_\omega(M)$ (\i.e. $\mathfrak{Im}K=0$). In fact, the restriction of $\Omega$ to $AK_\omega(M)$ is the symplectic form considered by A. Fujiki \cite{fujiki:1}. 
\end{itemize}
}
\end{rem}

Before going further, recall that a {\em hamiltonian vector field} $X_f$ on $(M,\omega)$ is a symplectic vector field of the form $X_f=-\omega^{-1}df$ for a function $f\in C^{\infty}(M)$. It is also called the {\em symplectic gradient} of $f$, and will be sometimes denoted by $\mathrm{grad}_{\omega}f$. The group $\mathrm{Ham}(M,\omega)$ of {\em hamiltonian diffeomorphisms} is the set of 1-parameter subgroups generated by vector fields of the form $\mathrm{grad}_{\omega}f_t$ for $f\in C^{\infty}(M\times I,\mathbb{R})$ where $I$ is an interval containing 0. According to a result of A. Banyaga \cite{banyaga:1}, to every path $t\mapsto\gamma_t\in\mathrm{Ham}(M,\omega)$ through the identity corresponds a family of functions $f_t\in C^{\infty}(M)$ such that
$$
\frac{d\gamma_t}{dt}=\mathrm{grad}_{\omega}f_t \quad \forall t.
$$
It follows that the Lie algebra of $\mathrm{Ham}(M,\omega)$ can be identified with the hamiltonian vector fields on $(M,\omega)$:
$$
\mathfrak{ham}(M,\omega)=\{\mathrm{grad}_{\omega}f\in C^{\infty}(TM)\ | \ f\in C^{\infty}(M)\}.
$$
The exponential map is given by the flow:
$$
\exp(tX)=\varphi^X_t; \quad \frac{d}{dt}\varphi^X_t= X\circ \varphi^X_t, \quad \varphi^X_0=\mathrm{Id}_M.
$$
In turn, this Lie algebra is identified to the space $C_0^{\infty}(M)$ of smooth functions $f$ normalized by the condition $\int_M fv_\omega=0$, and endowed with the Poisson bracket $\{f,g\}=X_f\cdot g = -X_g\cdot f$. The correspondence being 
\begin{equation}\label{ham-Fonctions}
C^{\infty}_0(M)\rightarrow \mathfrak{ham}(M,\omega):f\mapsto \mathrm{grad}_{\omega}f.
\end{equation}
It is also possible to use the $\mathrm{Ad}$-invariant euclidean scalar product $(f,g)=\int_M fgv_{\omega}$ to identify $C^{\infty}_0(M)$ to a subset of $C^{\infty}_0(M)^*$.

The group $\mathrm{Ham}(M,\omega)$ acts on $GAK_\omega(M)$ by $\varphi\cdot K=\varphi_*K\varphi_*^{-1}$ and the infinitesimal action corresponding to $V\in\mathfrak{ham}(M,\omega)$ is given by
\begin{equation}\label{Vfondamental}
V^{\sharp}_{\; K} = \left.\frac{d}{dt}\right|_{t=0}\varphi^V_t\cdot K=-\mathcal{L}_VK.
\end{equation} 
As shown in Theorem \ref{Theorem 1}, the restriction of this action to $AK_\omega(M)$, is hamiltonian, and the moment map can be identified with the Hermitian scalar curvature $u_J$.

\section{Toric Generalized Kähler structures}

In this section, we study generalized Kähler structures of symplectic type on compact symplectic toric manifolds. Section \ref{SectionDelzant} recalls the elements of symplectic toric geometry, which will be used in this paper. A source for this material is the monograph \cite{guillemin:2}. Section \ref{SectionPotentiel} introduces the class $DGK_{\omega}^{\mathbb{T}}(M)$ of torus-invariant anti-diagonal generalized Kähler structures of symplectic type, and we show that elements in this class are parametrized by the data of an antisymmetric matrix $C$ and a strictly convex smooth function $\tau$ defined on the interior of the moment polytope. This generalizes the notion of symplectic potential discovered by V. Guillemin \cite{guillemin:1} and M. Abreu \cite{abreu:1,abreu:2} in the Kähler setting. In section \ref{Compactification et déformation}, we adress the question of compactification, which is to determine whether a given pair $(\tau,C)$ as above comes from an element of $DGK_{\omega}^{\mathbb{T}}(M)$. In the spirit of \cite{apostolov:3}, we list sufficient conditions for compactification, and as a corollary, we obtain a simple and explicit procedure for deforming a toric Kähler metric to a strictly generalized Kähler element of $DGK_{\omega}^{\mathbb{T}}(M)$.

\subsection{Delzant theory}\label{SectionDelzant}
Recall that a compact {\em symplectic toric manifold} of dimension $2m$ is a triple $(M,\omega,\mathbb{T})$ such that the torus $\mathbb{T}$ of dimension $m$ acts on the compact connected symplectic manifold $(M,\omega)$ of real dimension $2m$ in an effective and hamiltonian fashion with moment map $\mu:M\rightarrow \mathfrak{t}^*:x\mapsto(\mu(x):\xi\mapsto \mu^{\xi}(x))$. In turn, this means that $\mu$ is $\mathbb{T}$-equivariant (in fact {\em $\mathbb{T}$-invariant} as $\mathbb{T}$ is abelian) and for all $\xi\in\mathfrak{t}=\mathrm{Lie}(\mathbb{T})$, $\mu^{\xi}$ is a hamiltonian function for the infinitesimal action $\xi^{\sharp}$ induced  on $M$ by $\xi$. According to M. F. Atiyah \cite{atiyah:1} and Guillemin-Sternberg \cite{guillemin:3}, the image $\Delta=\mu(M)\subset\mathfrak{t}^*$ of the moment map is the convex hull of the image by $\mu$ of the fixed points of the action. A theorem of T. Delzant \cite{delzant:1} states that compact symplectic toric manifolds are classified (up to equivariant symplectomorphisms) by their moment polytopes $\Delta$. Recall the definition of these classifying polytopes:
\begin{déf}\label{DéfDelzantPolytope}
Let $\mathfrak{t}$ be a vector space of dimension $m$. A {\em Delzant polytope} with $d$ facets in $\mathfrak{t}^*$ is the data $(\Delta,\Lambda,\nu_1,\ldots,\nu_d)$ of a set $\Delta\subset\mathfrak{t}^*$ which is the convex hull of a finite number of points called {\em vertices}, a lattice $\Lambda\subset\mathfrak{t}$ and {\em normals} $\nu_1,\ldots,\nu_d\in\Lambda$ such that
$$
\Delta=\{x\in \mathfrak{t}^* \ | \ L_j(x)\geq 0, \ j=1,\ldots, d\},
$$
where the $L_j$'s are functions of the form
$$
L_j(x)=\langle \nu_j, x\rangle+\lambda_j 
$$
for certain numbers $\lambda_1,\ldots,\lambda_d\in \mathbb{R}$, and such that for each vertex $x\in\Delta$, the normals $\nu_j$ for which $L_j(x)=0$ make up a basis of $\Lambda$. The {\em facets} of $\Delta$ are the sets $F_j$ of the form
$$
F_j= \{x\in \Delta \ | \ L_j(x)=0 \}, \quad j=1,\ldots,d.
$$
A {\em face} of codimension $k$ of $\Delta$ is the intersection of $k$ facets. For a face $F$, we call {\em interior} of $F$ the set $\mathring{F}$ of points of $F$ which are in no face of smaller codimension. In other words, if $F=\bigcap_{j\in I}^kF_{j}$ for a certain set in indices $I=\{j_1,\ldots,j_k\}$, then
$$
\mathring{F} = \{x\in \Delta \ | \ L_j(x)=0 \Leftrightarrow j\in I \}.
$$
\end{déf}

It is shown in Delzant \cite{delzant:1} that for any face $F=F_{j_1}\cap\ldots\cap F_{j_k}$ of codimension $k$ and any $p\in \mu^{-1}(\mathring{F})$, the stabilizer of $p$ in $\mathbb{T}$ is the sub-torus $\mathbb{T}_F$ of dimension $k$ corresponding to the subalgebra $\mathfrak{t}_F$ generated by the normals $\nu_{j_1},\ldots,\nu_{j_k}$. Moreover, $M_F=\mu^{-1}(F)$ is a symplectic toric submanifold of codimension $2k$ for the action of $\mathbb{T}\slash \mathbb{T}_F$. Its moment polytope is naturally identified with $F$, in the following sense. The face $F$ is supported by an affine subspace of the form $x_0+\mathfrak{t}_F^0$, where $\mathfrak{t}_F^0\cong (\mathfrak{t}\slash\mathfrak{t}_F)^*$ is the annihilator of $\mathfrak{t}_F$ in $\mathfrak{t}^*$. A moment map for the effective action of $\mathbb{T}\slash \mathbb{T}_F$ is then $\mu|_{M_F}-x_0$. The preimage 
$$
\mathring{M}:=\mu^{-1}(\mathring{\Delta})
$$
of the interior of the moment polytope corresponds to the set of points where the action of $\mathbb{T}$ is free, and this set is open and dense in $M$ (cf. \cite{ginzburg:1} Corollary B.48). Finally, let us mention the observation in \cite{lerman:1} (Proposition 7.3) that the set of smooth functions $C^{\infty}(\Delta)$ (\i.e. those functions which are the restriction to $\Delta$ of a function of $C^{\infty}(\mathfrak{t}^*)$) is pulled back to $M$ via $\mu$ to the set $C^{\infty}(M)^{\mathbb{T}}$ of smooth $\mathbb{T}$-invariant functions. Because of this, we shall freely identify $C^{\infty}(M)^{\mathbb{T}}$ and $C^{\infty}(\Delta)$.

\subsection{The symplectic potential}\label{SectionPotentiel}

Let $(M,\omega, \mathbb{T})$ be a compact symplectic toric manifold of real dimension $2m$, with moment map $\mu:M\rightarrow \Delta\subset \mathfrak{t}^*$. In this section, we are concerned with the generalized almost Kähler structures of symplectic type on $(M,\omega)$ (cf. section \ref{section3}) which are invariant under the action of $\mathbb{T}$. In accordance with the identification in Proposition \ref{PropEnrietti}, such a structure can also be regarded as an $\omega$-tamed $\mathbb{T}$-invariant almost complex structure on $M$. Recall also that such a $J$ represents an integrable generalized almost Kähler structure if and only if both $J$ and $J^{*_\omega}$ are integrable. 

\begin{notation}
{\rm
Let $GAK_{\omega}^{\mathbb{T}}(M)$ (resp. $GK_{\omega}^{\mathbb{T}}(M)$) denote the set of $\mathbb{T}$-invariant generalized almost Kähler (resp. generalized Kähler) structures of symplectic type as defined in section \ref{section3}. Similarly, let $AK_{\omega}^{\mathbb{T}}(M)$ (resp. $K_{\omega}^{\mathbb{T}}(M)$) denote the set of $\mathbb{T}$-invariant $\omega$-compatible almost complex (resp. complex) structures.
}
\end{notation}

\begin{prop}\label{LemmeCoordonnéesCX} Let $J\in GK_{\omega}^{\mathbb{T}}(M)$. Given a basis $(\xi_1,\ldots, \xi_m)$ of $\mathfrak{t}$ and $K_i=\xi_i^{\sharp}$ the corresponding infinitesimal actions on $M$, there exists pluriharmonic functions $u^j$ on $\mathring{M}$ which, in a neighborhood of each point, can be completed by angular coordinates $t^j$ to form a system $J$-holomorphic coordinates $(u^j,t^j)$ such that
$$
\frac{\partial}{\partial u^j} = -JK_j, \ \frac{\partial}{\partial t^j} = K_j.
$$
Moreover, for each such coordinate system, we may replace the functions \linebreak $u^1,\ldots,u^m$ by the functions $\mu^1,\ldots,\mu^m$ (where $\mu^i=\mu^{\xi_i}$) to obtain new coordinates $(\mu^j,t^j)$.
\end{prop}

\begin{proof}
Everywhere on $M$, the tangent space to the orbits of the action is generated by the infinitesimal actions $K_1,\ldots, K_m$. In particular, on $\mathring{M}$ where the orbits are of dimension $m$, the $K_i$'s are linearly independent. Denote by $\mathcal{K}$ the Lagrangian distribution on $\mathring{M}$ generated by the $K_i$. Then, we have $\mathcal{K}\oplus J\mathcal{K}=T\mathring{M}$. To complete the proof, it suffices to show that the Lie bracket of each pair of basis elements $(K_1,\ldots,K_m,JK_1,\ldots,JK_m)$ vanishes. The multiplicative structure on $\mathfrak{t}$ being trivial, we have $[K_i,K_j]=[\xi_i,\xi_j]^{\sharp}=0$ $\forall i,j$. Next, since the action of $\mathbb{T}$ preserves $J$, we have $\mathcal{L}_{K_i}J=0$ $\forall i$, which is equivalent to $[K_i,J\cdot]=J[K_i,\cdot]$. In particular, $[K_i,JK_j]=J[K_i,K_j]=0$. Finally, $J$ being integrable, the missing equality $[JK_i,JK_j]=0$ follows from the vanishing of the Nijenhuis tensor $N_J(X,Y)=[JX,JY]-J[JX,Y]-J[X,JY]-[X,Y]$. We deduce from this that $\mathrm{span}(d\mu^1,\ldots,d\mu^m) = \mathrm{span}(du^1,\ldots,du^m)$, and so the functions $(\mu^1,\ldots,\mu^m,t^1,\ldots,t^m)$ define a local diffeomorphism.
\end{proof}

It is important to note that even though the functions $u^j$ and $t^j$ are only defined locally, the coordinate fields $\frac{\partial}{\partial u^j}$, $\frac{\partial}{\partial t^j}$, $\frac{\partial}{\partial \mu^j}$ (as well as the 1-forms $du^j$, $dt^j$, $d\mu^j$) are well-defined globally on $\mathring{M}$ for a fixed choice of a basis $(\xi_j)$ of $\mathfrak{t}$. {\em From now on, we fix once and for all a basis $(\xi_j)$ of $\mathfrak{t}$ and we denote $(x^j)$  the coordinates on $\mathfrak{t}^*$ induced by the dual basis $(\xi_j^*)$}. 

For $J\in K_{\omega}^{\mathbb{T}}(M)$, it is well known that the coordinates $(\mu^j,t^j)$ from Proposition \ref{LemmeCoordonnéesCX} are symplectic \cite{apostolov:3}. They are refered to as momentum-angle coordinates associated to $J$. However, for a general $J\in GK_{\omega}^{\mathbb{T}}(M)$, we shall see in Propostion \ref{LemmeCoordonnéesS} below that this is only the case if the symplectic dual $J^{*_{\omega}}$ is "anti-diagonal" in the sense of the following proposition.

\begin{prop}\label{PropPSI} Let $J\in GK_{\omega}^{\mathbb{T}}(M)$ with corresponding angular coordinates $t^1,\ldots,t^m$ as in Proposition \ref{LemmeCoordonnéesCX}. Locally on $\mathring{M}$, $J$ takes the anti-diagonal form
 \begin{equation}\label{FormeBlocAntidiagonale}
\sum\limits_{i,j=1}^m\Psi_{ij}\frac{\partial}{\partial t^i}\otimes d\mu^j - \sum\limits_{i,j=1}^m\Psi^{ij}\frac{\partial}{\partial \mu^i}\otimes dt^j,
\end{equation}
where the matrix $\Psi\in C^{\infty}(\mathring{M},\mathbb{R}^{m\times m})$ is given by 
\begin{equation}\label{DéfPsi}
\frac{\partial}{\partial \mu^i} = \sum\limits_{j=1}^m \Psi_{ji}\frac{\partial}{\partial u^j},
\end{equation}
and where $\Psi^{ij}=(\Psi^{-1})_{ij}$. Let $\mathcal{K}$ denote the lagrangian distribution on $\mathring{M}$ generated by the action of $\mathbb{T}$. More generally,for an almost complex structure $\mathring{J}$ defined on $\mathring{M}$, the following statements are equivalent:
\begin{itemize}
 \item[(i)] $\mathring{J}\mathcal{K}=J\mathcal{K}$;
 \item[(ii)] $\mathring{J}$ is of the anti-diagonal form 
\begin{equation}\label{FormeBlocAntidiagonaleTilde}
\mathring{J} = \sum\limits_{i,j=1}^m\mathring{\Psi}_{ij}\frac{\partial}{\partial t^i}\otimes d\mu^j - \sum\limits_{i,j=1}^m\mathring{\Psi}^{ij}\frac{\partial}{\partial \mu^i}\otimes dt^j
\end{equation}
relative to coordinates $(\mu^j,t^j)$ induced by $J$ as in Proposition \ref{LemmeCoordonnéesCX}.
\end{itemize}
\end{prop}

\begin{proof}
We saw in the proof of Proposition \ref{LemmeCoordonnéesCX} that $\mathrm{span}(d\mu^1,\ldots,d\mu^m) = \mathrm{span}(du^1,\ldots,du^m)$. Write
$$
du^i=\sum\limits_{j=1}^m\Psi_{ij}d\mu^j,
$$
so that $Jdt^i=\sum\limits_{j=1}^m\Psi_{ij}d\mu^j$ (this equation determines $J$ entirely since $J^2=-\mathrm{Id}$). It follows that $\Psi$ verifies \eqref{DéfPsi} and $J$ is determined by
$$
J\frac{\partial}{\partial t^i}=-\sum\limits_{j=1}^m\Psi^{ji}\frac{\partial}{\partial \mu^j},
$$
which is equivalent to \eqref{FormeBlocAntidiagonale}.

Because of \eqref{DéfPsi}, we have
\begin{equation}\label{ObservationClé}
J\mathcal{K}=\mathrm{span}\left(\frac{\partial}{\partial \mu^1},\ldots,\frac{\partial}{\partial \mu^m}\right),
\end{equation}
and so an almost complex structure $\mathring{J}$ verifies (i) if and only if it takes the form
$$
\mathring{J}\frac{\partial}{\partial t^i}=-\sum\limits_{j=1}^m\mathring{\Psi}^{ji}\frac{\partial}{\partial \mu^j}
$$
for a certain matrix $\mathring{\Psi}$, which is equivalent to \eqref{FormeBlocAntidiagonaleTilde}.
\end{proof}

\begin{notation}
{\rm
We shall be interested in the almost complex structures $J\in GK_\omega^{\mathbb{T}}(M)$ whose symplectic dual $J^{*_\omega}$ is also anti-diagonal. Thus, set 
$$
DGK_{\omega}^{\mathbb{T}}(M)=\{J\in GK_\omega^{\mathbb{T}}(M)\ |\ J^{*_\omega}\mathcal{K}=J\mathcal{K}\},
$$
$$
DGAK_{\omega}^{\mathbb{T}}(M)=\{J\in GAK_\omega^{\mathbb{T}}(M)\ |\ J^{*_\omega}\mathcal{K}=J\mathcal{K}\}.
$$
}
\end{notation}

\begin{prop} \label{LemmeCoordonnéesS}
For $J\in GK_\omega^{\mathbb{T}}(M)$, the following conditions are  equivalent:
\begin{itemize}
\item[(i)] $J^{*_\omega}\mathcal{K}=J\mathcal{K}$.
\item[(ii)] The distribution $J\mathcal{K}$ is Lagrangian.
\item[(iii)] The coordinates $(\mu^j,t^j)$ are symplectic, \i.e. $(\mu^j,t^j)$ define momentum-angle coordinates on  $(M,\omega)$.
\end{itemize}
Moreover, if $J\in DGK_{\omega}^{\mathbb{T}}(M)$ and if $\mathring{J}$ is an almost complex structure on $\mathring{M}$ of anti-diagonal form 
$$
\mathring{J}=\sum\limits_{i,j=1}^m\mathring{\Psi}_{ij}\frac{\partial}{\partial t^i}\otimes d\mu^j-\sum\limits_{i,j=1}^m\mathring{\Psi}^{ij}\frac{\partial}{\partial \mu^i}\otimes dt^j
$$
with respect to momentum-angle coordinates $(\mu^j,t^j)$ induced by $J$, then $\mathring{J}^{*_\omega}$ is automatically also anti-diagonal with
\begin{equation}\label{DualDiagonal}
\mathring{J}^{*_\omega}=-\sum\limits_{i,j=1}^m\mathring{\Psi}_{ji}\frac{\partial}{\partial t^i}\otimes d\mu^j+\sum\limits_{i,j=1}^m\mathring{\Psi}^{ji}\frac{\partial}{\partial \mu^i}\otimes dt^j.
\end{equation}
\end{prop}

\begin{proof}
$(i)\Leftrightarrow (ii)$: Generally speaking, for a symplectic vector space $(V,\omega)$ equipped with a complex structure $J$ and a Lagrangian subspace $L$, the subspace $JL$ is Lagrangian if and only if $J^{*_\omega}L=JL$. Indeed, we have $\omega(JL,J^{*_\omega}L)=\omega(L,L)=0$, and so $J^{*_\omega}L\subset (JL)^{\perp_{\omega}}$. But, by definition, $JL$ is Lagrangian if and only if $JL = (JL)^{\perp_{\omega}}$. The equivalence between (i) and (ii) thus holds for any almost complex structure on $\mathring{M}$.

$(ii)\Leftrightarrow (iii)$: In general, we have
$$
\omega\left(\frac{\partial}{\partial t^i}, \frac{\partial}{\partial t^j}\right)=\omega\left(K_i,K_j\right)=0,
$$
$$
\omega\left(\frac{\partial}{\partial \mu^i},\frac{\partial}{\partial t^j}\right)=-\omega\left(K_j, \frac{\partial}{\partial \mu^i}\right)=d\mu^j\left(\frac{\partial}{\partial \mu^i}\right)=\delta^j_i.
$$
The equivalence between $(ii)$ and $(iii)$ then follows immediately from \eqref{ObservationClé}.

From the fact that the coordinates $(\mu^j,t^j)$ are symplectic, if $\mathring{J}$ is of the form \eqref{FormeBlocAntidiagonaleTilde}, we deduce formula \eqref{DualDiagonal} from $\omega(\mathring{J}\cdot,\cdot)=\omega(\cdot,\mathring{J}^{*_\omega}\cdot)$.
\end{proof}

Since a complex structure $J\in GK_\omega^{\mathbb{T}}(M)$ is compatible with $\omega$ if and only if $J=-J^{*_\omega}$ and, in this case, the condition $J^{*_\omega}\mathcal{K}=J\mathcal{K}$ is trivially satisfied, the set $DGK_\omega^{\mathbb{T}}(M)$ is an intermediate class between the Kähler structures and the generalized Kähler structures, \i.e. we have the strict inclusions
$$
K_{\omega}^{\mathbb{T}}(M)\subsetneq DGK_{\omega}^{\mathbb{T}}(M)\subsetneq GK_{\omega}^{\mathbb{T}}(M).
$$

\begin{déf}\label{DéfinitionCoordAdmissibles}
Let us call {\bf admissible coordinates} a coordinate system $(\mu^j,t^j)$ induced as in Proposition \ref{LemmeCoordonnéesCX} by an element $J\in DGK_{\omega}^{\mathbb{T}}(M)$.
\end{déf}

\begin{rem}
{\rm
\begin{itemize}
\item[(1)] In this language, Propositions \ref{PropPSI} and \ref{LemmeCoordonnéesS} imply that $J\in GK_{\omega}^{\mathbb{T}}(M)$ belongs to $DGK_{\omega}^{\mathbb{T}}(M)$ if and only if there exists admissible cooridnates $(\mu^j,t^j)$ with respect to which $J$ takes the anti-diagonal form \eqref{FormeBlocAntidiagonale}.
\item[(2)] There is a natural choice of admissible coordinates on $(M,\omega)$, obtained by taking the complex structure $J$ to be the standard Kähler structure on $M$ coming from Delzant's construction\footnote{Recall that in his famous theorem, Delzant constructs a toric symplectic manifold with prescribed moment polytope as the symplectic quotient of $\mathbb{C}^d$ by a certain sub-torus of the $\mathbb{T}^d$-action. In particular, this action preserves the standard complez structure of $\mathbb{C}^d$ and so the Kähler structure descends to the quotient (cf. for instance \cite{hitchin:4}).}. In this case, V. Guillemin \cite{guillemin:1} has found an explicit expression for the matrix $\Psi$ in terms of the fonctions $L_j$ defining the moment polytope (cf Definition \ref{DéfDelzantPolytope}):
$$
\Psi_{ij}=\frac{\partial^2}{\partial\mu^i\partial\mu^j}\left(\frac{1}{2}\sum\limits_{j=1}^mL_j\log L_j\right).
$$
\end{itemize}
}
\end{rem}

Our next theorem extends V. Guillemin's notion of symplectic potential \cite{guillemin:1, guillemin:2} of elements of $K_{\omega}(M)$ to the case of elements of $DGK_\omega^{\mathbb{T}}(M)$.

\begin{thm}\label{PropPrincipale} 
Let $\mathring{J}\in DGAK^{\mathbb{T}}_{\omega}(\mathring{M})$ of the anti-diagonal form
\begin{equation}\label{J}
\mathring{J}=\sum\limits_{i,j=1}^m\mathring{\Psi}_{ij}\frac{\partial}{\partial t^i}\otimes d\mu^j - \sum\limits_{i,j=1}^m\mathring{\Psi}^{ij}\frac{\partial}{\partial \mu^i}\otimes dt^j
\end{equation}
with respect to admissible coordinates $(\mu^j,t^j)$. Then, $\mathring{J}$ is integrable if and only if
 $\mathring{\Psi}_{ij,k}=\mathring{\Psi}_{ik,j}$ $\forall i,j,k$, whereas $\mathring{J}^{*_{\omega}}$ is integrable if and only if $\mathring{\Psi}_{ji,k}=\mathring{\Psi}_{ki,j}$. If these two conditions are met (\i.e. if $\mathring{J}$ is integrable as a generalized Kähler structure), then $\mathring{\Psi}$ is of the form
\begin{equation}\label{decA}
\mathring{\Psi}=\mathrm{Hess}(\tau)+C,
\end{equation}
where $\tau\in C^{\infty}(\mathring{\Delta})$ is strictly convex\footnote{\i.e. its Hessian is positive definite} and $C$ is a constant antisymmetric matrix. Conversely, given $\tau\in C^{\infty}(\mathring{\Delta})$ strictly convex and $C$ an antisymmetric matrix, formulas \eqref{J} and \eqref{decA} define an element $\mathring{J}$ of $DGK_{\omega}^{\mathbb{T}}(\mathring{M})$.
\end{thm}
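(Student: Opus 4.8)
The plan is to read off the two integrability conditions directly from the Nijenhuis tensor of $\mathring{J}$ in the admissible coordinates, and then to split the matrix $\mathring{\Psi}$ into symmetric and antisymmetric parts. Throughout I would use that $(\mu^j,t^j)$ are genuine coordinates, so the fields $\frac{\partial}{\partial\mu^j},\frac{\partial}{\partial t^j}$ all commute, and that $\mathbb{T}$-invariance forces the entries $\mathring{\Psi}_{ij}$ to be functions of the momenta $\mu$ alone. As an endomorphism, \eqref{J} reads $\mathring{J}\frac{\partial}{\partial\mu^k}=\sum_i\mathring{\Psi}_{ik}\frac{\partial}{\partial t^i}$ and $\mathring{J}\frac{\partial}{\partial t^k}=-\sum_i\mathring{\Psi}^{ik}\frac{\partial}{\partial\mu^i}$.

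First I would note that, since $\mathring{J}$ is invertible and $\{\mathring{J}\frac{\partial}{\partial\mu^b}\}_b$ spans the same distribution as $\{\frac{\partial}{\partial t^b}\}_b$, the algebraic identities $N_{\mathring{J}}(X,\mathring{J}Y)=-\mathring{J}N_{\mathring{J}}(X,Y)$ and $N_{\mathring{J}}(\mathring{J}X,\mathring{J}Y)=-N_{\mathring{J}}(X,Y)$ reduce the vanishing of the Nijenhuis tensor to the vanishing of its ``$\mu\mu$''-components. A direct bracket computation, using that the $\mathring{\Psi}_{ij}$ depend only on $\mu$ and that all coordinate fields commute, gives
\[
N_{\mathring{J}}\Big(\tfrac{\partial}{\partial\mu^a},\tfrac{\partial}{\partial\mu^b}\Big)=\sum_{i,k}\mathring{\Psi}^{ki}\big(\mathring{\Psi}_{ib,a}-\mathring{\Psi}_{ia,b}\big)\frac{\partial}{\partial\mu^k}.
\]
As $(\mathring{\Psi}^{ki})$ is invertible, this vanishes precisely when $\mathring{\Psi}_{ij,k}=\mathring{\Psi}_{ik,j}$, which is the stated criterion for $\mathring{J}$. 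For $\mathring{J}^{*_\omega}$ I would invoke \eqref{DualDiagonal}: it is again anti-diagonal, now with matrix $-\mathring{\Psi}^{T}$, so running the very same computation (which used only the anti-diagonal form, dependence on $\mu$, and commuting fields) for $-\mathring{\Psi}^T$ yields integrability of $\mathring{J}^{*_\omega}$ iff $\mathring{\Psi}_{ji,k}=\mathring{\Psi}_{ki,j}$, as claimed.

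Assuming both conditions, write $\mathring{\Psi}=S+C$ with $S=\frac{1}{2}(\mathring{\Psi}+\mathring{\Psi}^T)$ and $C=\frac{1}{2}(\mathring{\Psi}-\mathring{\Psi}^T)$. Adding the two conditions gives $S_{ij,k}=S_{ik,j}$ and subtracting them gives $C_{ij,k}=C_{ik,j}$, so each part satisfies the first condition on its own. For $S$, its symmetry together with $S_{ij,k}=S_{ik,j}$ makes $S_{ij,k}$ totally symmetric in $i,j,k$, whence (the Poincar\'e lemma on the convex, hence simply connected, $\mathring{\Delta}$) $S_{ij}=\frac{\partial^2\tau}{\partial\mu^i\partial\mu^j}$ for some $\tau\in C^\infty(\mathring{\Delta})$. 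For $C$ I expect the one genuinely delicate point: combining $C_{ij,k}=C_{ik,j}$ with $C_{ij}=-C_{ji}$ along the cyclic chain $C_{ij,k}=C_{ik,j}=-C_{ki,j}=-C_{kj,i}=C_{jk,i}=C_{ji,k}=-C_{ij,k}$ forces $C_{ij,k}=0$, so $C$ is a constant antisymmetric matrix. Strict convexity of $\tau$ then comes from tameness: a short computation shows $\omega(\cdot,\mathring{J}\cdot)$ is block-diagonal with blocks $\mathring{\Psi}$ on $\mathrm{span}(\frac{\partial}{\partial\mu^j})$ and $\mathring{\Psi}^{-1}$ on $\mathrm{span}(\frac{\partial}{\partial t^j})$, and its positivity is equivalent to positive-definiteness of the common symmetric part $S=\mathrm{Hess}(\tau)$ (positivity of the symmetric part of $\mathring{\Psi}^{-1}$ being automatic once that of $\mathring{\Psi}$ holds).

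For the converse, given strictly convex $\tau$ and a constant antisymmetric $C$, set $\mathring{\Psi}=\mathrm{Hess}(\tau)+C$ and define $\mathring{J}$ by \eqref{J}. Since $\mathrm{Hess}(\tau)>0$, the matrix $\mathring{\Psi}$ is invertible, whence $\mathring{J}^2=-\mathrm{Id}$; tameness and $\mathbb{T}$-invariance follow as above, and Proposition \ref{LemmeCoordonn�esS} guarantees $\mathring{J}^{*_\omega}$ is anti-diagonal so that $\mathring{J}\in DGAK^{\mathbb{T}}_\omega(\mathring{M})$. Integrability of both $\mathring{J}$ and $\mathring{J}^{*_\omega}$ is then immediate, because $C$ constant makes $\mathring{\Psi}_{ij,k}=\frac{\partial^3\tau}{\partial\mu^i\partial\mu^j\partial\mu^k}$ totally symmetric, so both conditions hold; hence $\mathring{J}\in DGK^{\mathbb{T}}_\omega(\mathring{M})$. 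The main obstacle in the whole argument is the cyclic index computation forcing the antisymmetric part to be constant; everything else is bookkeeping with the two symmetry conditions and the Poincar\'e lemma.
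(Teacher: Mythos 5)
Your proposal is correct, and it reaches the same two differential criteria as the paper by a genuinely different route. Where you compute the Nijenhuis tensor of $\mathring{J}$ directly in the admissible coordinates, reduce to the $\mu\mu$-components via the algebraic identity $N_{\mathring{J}}(\mathring{J}X,Y)=-\mathring{J}N_{\mathring{J}}(X,Y)$ (legitimate, since $N_{\mathring{J}}$ is tensorial and $\mathrm{span}(\partial/\partial\mu^j)=\mathring{J}\mathcal{K}$), and read off $\mathring{\Psi}_{ij,k}=\mathring{\Psi}_{ik,j}$ from the invertibility of $(\mathring{\Psi}^{ki})$, the paper instead observes that $\mathring{J}dt^i=\sum_j\mathring{\Psi}_{ij}d\mu^j$, so that integrability amounts to closedness of these 1-forms, whose local primitives $\mathring{u}^i$ furnish holomorphic coordinates $(\mathring{u}^i,t^i)$; the paper's argument is shorter and has the side benefit of explicitly producing the holomorphic coordinates reused later (e.g.\ in Proposition \ref{propGoto}), while yours is self-contained and does not lean on Proposition \ref{LemmeCoordonn�esCX}. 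Your treatment of the dual via \eqref{DualDiagonal}, i.e.\ running the same computation for the anti-diagonal matrix $-\mathring{\Psi}^T$, yields $\mathring{\Psi}_{ji,k}=\mathring{\Psi}_{ki,j}$, in agreement with the theorem statement (the paper's proof body writes $\mathring{\Psi}_{ji,k}=\mathring{\Psi}_{jk,i}$ at this point, evidently an index slip, since that condition would coincide with the one for $\mathring{J}$ itself). From the criteria onward the two arguments coincide: adding and subtracting gives the conditions for the symmetric and antisymmetric parts separately, the same cyclic six-step chain forces the antisymmetric part to be constant, total symmetry of $S_{ij,k}$ plus the Poincar\'e lemma on the convex (hence $H^1_{dR}=0$) set $\mathring{\Delta}$ gives $S=\mathrm{Hess}(\tau)$, and strict convexity follows from tameness via ${\bf x}^TC{\bf x}=0$ exactly as in the paper; your extra observation that positivity of the symmetric part of $\mathring{\Psi}^{-1}$ is automatic (take $y=\mathring{\Psi}x$, so $y^T\mathring{\Psi}^{-1}y=x^T\mathring{\Psi}^Tx=x^T\mathring{\Psi}x$) correctly completes the block-diagonal computation of $\omega(\cdot,\mathring{J}\cdot)$. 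Your converse is also slightly more explicit than the paper's, which leaves invertibility of $\mathrm{Hess}(\tau)+C$, the check $\mathring{J}^2=-\mathrm{Id}$, and membership in $DGAK^{\mathbb{T}}_\omega(\mathring{M})$ implicit; nothing is missing on either side.
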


\begin{proof}
The almost complex structure \eqref{J} is given by
\begin{equation}\label{J^*}
\mathring{J}dt^i=\sum\limits_{j=1}^m\mathring{\Psi}_{ij}d\mu^j.
\end{equation}
If $\mathring{J}$ is integrable, Proposition \ref{LemmeCoordonnéesCX} guarantees the existence of $\mathring{J}$-holomorphic coordinates $(\mathring{u}^i,\mathring{t}^i)$ such that  $(d\mathring{u}^i,d\mathring{t}^i)$ is the dual basis to $(-\mathring{J}K_i,K_i)$. Since $\mathring{J}\mathcal{K}=J\mathcal{K}$, we have $d\mathring{t}^i=dt^i$, and so equation \eqref{J^*} can be written
$$
d\mathring{u}^i=\sum\limits_{j=1}^m\mathring{\Psi}_{ij}d\mu^j.
$$
Taking the exterior derivative of this equation, we obtain the condition $\mathring{\Psi}_{ij,k}=\mathring{\Psi}_{ik,j}$ $\forall i,j,k$. Conversely, if $\mathring{\Psi}_{ij,k}=\mathring{\Psi}_{ik,j}$ $\forall i,j,k$, then taking the exterior derivative of \eqref{J^*}, we see that the 1-form $\mathring{J}^*dt^i$ is closed. It is thus locally exact which yields complex coordinates for $\mathring{J}$. We saw in Proposition \ref{LemmeCoordonnéesS} that $\mathring{J}^{*_\omega}$ takes the form \eqref{DualDiagonal}. The same argument as for $\mathring{J}$ thus shows that $\mathring{J}^{*_\omega}$ is integrable if and only if $\mathring{\Psi}_{ji,k}=\mathring{\Psi}_{jk,i}$ $\forall i,j,k$.

If $\mathring{J}$ and $\mathring{J}^{*_{\omega}}$ are integrable, then taking the sum and difference of the corresponding differential identities $\mathring{\Psi}_{ij,k}=\mathring{\Psi}_{ik,j}$ and $\mathring{\Psi}_{ji,k}=\mathring{\Psi}_{jk,i}$, we obtain the identities
\begin{equation}\label{eqAsym}
\mathring{\Psi}^s_{ij,k}=\mathring{\Psi}^s_{ik,j}, 
\end{equation}
\begin{equation}\label{eqAanti}
\mathring{\Psi}^a_{ij,k}=\mathring{\Psi}^a_{ik,j},
\end{equation}
where 
$$
\mathring{\Psi}^s = \frac{\mathring{\Psi}+\mathring{\Psi}^T}{2}, \quad \mathring{\Psi}^a=\frac{\mathring{\Psi}-\mathring{\Psi}^T}{2}
$$
are respectively the symmetric and antisymmetric parts of $\mathring{\Psi}$. Equation \eqref{eqAanti} implies that the matrix $\mathring{\Psi}^a$ is constant due to 
$$
\mathring{\Psi}^a_{ij,k}=\mathring{\Psi}^a_{ik,j}=-\mathring{\Psi}^a_{ki,j}=-\mathring{\Psi}^a_{kj,i}=\mathring{\Psi}^a_{jk,i}=-\mathring{\Psi}^a_{ij,k}.
$$
As for equation \eqref{eqAsym}, we make use of the general fact according to which a smooth $m\times m$ symmetric matrix-valued function $G$ defined on an open set $U\subset\mathbb{R}^m$ with $H^1_{dR}(U)=0$ satisfying $G_{ij,k}=G_{ik,j} \ \forall i,j,k$ is of the form $G = \mathrm{Hess}(g)$ for some function $g\in C^{\infty}(U)$. Thus, $\mathring{\Psi}^s=\mathrm{Hess}(\tau)$ for some function $\tau\in C^{\infty}(\mathring{\Delta})$. The fact that $\mathring{J}$ is $\omega$-tamed is equivalent to the positivity of $\mathring{\Psi}$, and since ${\bf x}^TC{\bf x}=0$ for all antisymmetric matrices $C$ and column vectors  ${\bf x}$, we have ${\bf x}^T\mathring{\Psi}{\bf x}={\bf x}^T\mathring{\Psi}^s{\bf x}$, from which it follows that $\tau$ is strictly convex.
\end{proof}

\begin{déf}
Given admissible coordintes $(\mu^j,t^j)$ and $\mathring{J}\in DGK^{\mathbb{T}}_{\omega}(\mathring{M})$ of antidiagonal form \eqref{J} for $\mathring{\Psi}=\mathrm{Hess}(\tau)+C$ as in the statement of Theorem \ref{PropPrincipale}, we will call the function $\tau$ the {\bf symplectic potential} of $\mathring{J}$.
\end{déf}

\subsection{Compactification and deformation}\label{Compactification et déformation}

We ask now whether a generalized almost Kähler structure $\mathring{J}\in DGAK_{\omega}^{\mathbb{T}}(\mathring{M})$ on $\mathring{M}$ is the restriction of an generalized almost Kähler structure defined on $M$? 

Let $(\mu^j,t^j)$ be admissible coordinates on $M$ and $J\in DGK^{\mathbb{T}}_\omega(M)$ (globally defined) be of the form
\begin{equation}\label{FormeJ}
J=\sum\limits_{i,j=1}^m\Psi_{ij}\frac{\partial}{\partial t^i}\otimes d\mu^j - \sum\limits_{i,j=1}^m\Psi^{ij}\frac{\partial}{\partial \mu^i}\otimes dt^j.
\end{equation}
Consider $\mathring{J}\in DGAK^{\mathbb{T}}_\omega(\mathring{M})$ (defined on $\mathring{M}$) of the form 
\begin{equation}\label{FormeJTilde}
\mathring{J}=\sum\limits_{i,j=1}^m\mathring{\Psi}_{ij}\frac{\partial}{\partial t^i}\otimes d\mu^j - \sum\limits_{i,j=1}^m\mathring{\Psi}^{ij}\frac{\partial}{\partial \mu^i}\otimes dt^j
\end{equation}
and set 
$$
\beta=\omega(\cdot,J\cdot),\quad \mathring{\beta}=\omega(\cdot,\mathring{J}\cdot).
$$
It is possible to argue as in the almost Kähler setting treated in \cite{apostolov:3} in order to obtain sufficient conditions for the compactification of such a $\mathring{J}$. Because of
\begin{equation}\label{reldtdmu}
Jdt^i=-\sum\limits_{i,j=1}^m\Psi_{ij}d\mu^j,
\end{equation}
we can write 
\begin{flalign}
\notag \mathring{\beta}-\beta &= \sum\limits_{i,j=1}^m(\mathring{\Psi}-\Psi)_{ij}d\mu^i\otimes d\mu^j+\sum\limits_{i,j,k,l=1}^m(\mathring{\Psi}^{-1}-\Psi^{-1})_{ij}(\Psi_{ik}Jd\mu^k)\otimes(\Psi_{jl}Jd\mu^l)\\
\label{MembreDeDroite}&=\sum\limits_{i,j=1}^m(\mathring{\Psi}-\Psi)_{ij}d\mu^i\otimes d\mu^j+\sum\limits_{k,l=1}^m(\Psi^T\mathring{\Psi}^{-1}\Psi-\Psi^T)_{kl}(Jd\mu^k)\otimes(Jd\mu^l).
\end{flalign}
Thus, if $\mathring{\Psi}-\Psi$ and $\Psi^T\mathring{\Psi}^{-1}\Psi-\Psi^T$ admit smooth extensions to $\Delta$, then the right hand side of \eqref{MembreDeDroite} defines a smooth $\mathbb{T}$-invariant bilinear form on the whole of $M$. It follows that $\mathring{\beta}$ (alternatively, $\mathring{J}$) admits a smooth extension to $M$. As $\mathring{M}$ is dense in $M$, by continuity, the extension verifies $\mathring{J}^2=-\mathrm{Id}$ everywhere on $M$ and is integrable provided that $\mathring{J}$ is integrable. On the other hand, a continuity argument only shows that $\mathring{\beta}$ is positive {\em semi}-definite on $M$. In order that the compactification of $\mathring{J}$ be $\omega$-tamed, we must make sure that the bilinear form
$$
\beta+\sum\limits_{i,j=1}^m(\mathring{\Psi}-\Psi)_{ij}d\mu^i\otimes d\mu^j+(\Psi^T\mathring{\Psi}^{-1}\Psi-\Psi^T)_{ij}(Jd\mu^i)\otimes(J d\mu^j)
$$
is positive definite on $M\backslash \mathring{M}$. We summarize the discussion in the following

\begin{thm}\label{ThmC123}
Let $J\in DGK_{\omega}^{\mathbb{T}}(M)$ be of the form \eqref{FormeJ} and $\mathring{J}\in DGAK^{\mathbb{T}}_\omega(\mathring{M})$ (resp. $\mathring{J}\in DGK^{\mathbb{T}}_\omega(\mathring{M})$) of the form \eqref{FormeJTilde}. If the matrix $\mathring{\Psi}$ associated with $\mathring{J}$ verifies the three conditions
\begin{itemize}
 \item[(C1)] $\mathring{\Psi}-\Psi$ admits a smooth extension to $\Delta$;
 \item[(C2)] $\Psi^T\mathring{\Psi}^{-1}\Psi-\Psi^T$ admits a smooth extension to $\Delta$;
 \item[(C3)] $\beta+\sum\limits_{i,j=1}^m(\mathring{\Psi}-\Psi)_{ij}d\mu^i\otimes d\mu^j+(\Psi^T\mathring{\Psi}^{-1}\Psi-\Psi^T)_{ij}(Jd\mu^i)\otimes(J d\mu^j)$ is positive definite on $M\backslash \mathring{M}$;
\end{itemize}
then $\mathring{J}$ is the restriction of an element of $DGAK^{\mathbb{T}}_\omega(M)$ (resp. of $DGK^{\mathbb{T}}_\omega(M)$).
\end{thm}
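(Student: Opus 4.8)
The plan is to package the computation \eqref{MembreDeDroite} into a clean argument that separates three concerns: smooth extension of the endomorphism $\mathring{J}$, preservation of the algebraic identity $\mathring{J}^2 = -\mathrm{Id}$ (and integrability), and the tameness positivity on $M \setminus \mathring{M}$. The key observation already displayed in the excerpt is that the difference $\mathring{\beta} - \beta$ of the two tamed bilinear forms decomposes as
$$
\sum_{i,j=1}^m(\mathring{\Psi}-\Psi)_{ij}\,d\mu^i\otimes d\mu^j + \sum_{k,l=1}^m(\Psi^T\mathring{\Psi}^{-1}\Psi-\Psi^T)_{kl}\,(Jd\mu^k)\otimes(Jd\mu^l),
$$
a tensor whose coefficients are exactly the quantities appearing in (C1) and (C2). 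First I would remark that the globally defined frame $\{d\mu^i, Jd\mu^k\}$ on $\mathring{M}$ consists of smooth $\mathbb{T}$-invariant $1$-forms that are already known (via the reference element $J \in DGK_{\omega}^{\mathbb{T}}(M)$) to extend smoothly across $M\setminus\mathring{M}$, since they are associated to a genuine compactified structure. Consequently, once (C1) and (C2) guarantee that the coefficient matrices extend smoothly to $\Delta$, the whole tensor $\mathring{\beta}-\beta$ extends to a smooth $\mathbb{T}$-invariant section over all of $M$.

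Next I would recover $\mathring{J}$ itself from this extension. Since $\mathring{\beta} = \beta + (\mathring{\beta}-\beta)$ is now smooth and globally defined, and since $\beta = \omega(\cdot, J\cdot)$ is already smooth with $\omega$ nondegenerate everywhere, one sets $\mathring{J} := \omega^{-1}\circ\mathring{\beta}$, viewing $\mathring{\beta}$ as a map $TM \to T^*M$; this gives a smooth $\mathbb{T}$-invariant endomorphism of $TM$ extending the original $\mathring{J}$ on the dense open set $\mathring{M}$. Because the identities $\mathring{J}^2=-\mathrm{Id}$, together with the tameness of $(\omega,\mathring{J})$ and the anti-diagonal form, hold on the dense subset $\mathring{M}$ and are closed (polynomial, resp. non-strict inequality) conditions, they pass to the closure by continuity. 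In particular $\mathring{J}^2 = -\mathrm{Id}$ holds on all of $M$, so the extension is a genuine almost complex structure, and $\mathring{\beta}$ is positive \emph{semi}-definite everywhere. If moreover $\mathring{J}$ was integrable on $\mathring{M}$, then integrability — a vanishing-tensor (Nijenhuis) condition that is closed — persists on $M$, placing the extension in $GK_{\omega}^{\mathbb{T}}(M)$; and the defining condition $\mathring{J}^{*_\omega}\mathcal{K} = \mathring{J}\mathcal{K}$ of Proposition \ref{LemmeCoordonn�esS}, being anti-diagonal form, likewise extends, landing us in $DGK_{\omega}^{\mathbb{T}}(M)$.

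The main obstacle — and the only point where a pure continuity argument is insufficient — is that semi-definiteness of $\mathring{\beta}$ on $M\setminus\mathring{M}$ does not upgrade automatically to the strict positivity required for $\mathring{J}$ to be $\omega$-tamed. This is precisely the role of hypothesis (C3): it is the explicit demand that the extended bilinear form
$$
\beta + \sum_{i,j=1}^m(\mathring{\Psi}-\Psi)_{ij}\,d\mu^i\otimes d\mu^j + (\Psi^T\mathring{\Psi}^{-1}\Psi-\Psi^T)_{ij}\,(Jd\mu^i)\otimes(J d\mu^j)
$$
be positive definite on the boundary stratum. Given (C3) on $M\setminus\mathring{M}$ and positivity on the open dense $\mathring{M}$ (automatic since $\mathring{J}\in DGAK$ there), positivity holds on all of $M$, so $\mathring{J}$ is $\omega$-tamed throughout, and by Proposition \ref{PropEnrietti} the extended structure lies in $GAK_{\omega}^{\mathbb{T}}(M)$ (respectively $GK_{\omega}^{\mathbb{T}}(M)$ in the integrable case). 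Combining this with the anti-diagonal condition preserved above completes the identification of the compactified object as an element of $DGAK_{\omega}^{\mathbb{T}}(M)$ (resp. $DGK_{\omega}^{\mathbb{T}}(M)$), which is the assertion of the theorem.
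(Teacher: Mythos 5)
Your proposal is correct and follows essentially the same route as the paper: the decomposition \eqref{MembreDeDroite}, with (C1)--(C2) guaranteeing a smooth $\mathbb{T}$-invariant extension of $\mathring{\beta}$ (hence of $\mathring{J}=\omega^{-1}\circ\mathring{\beta}$), density and continuity giving $\mathring{J}^2=-\mathrm{Id}$, integrability and positive semi-definiteness on $M$, and (C3) supplying exactly the strict positivity on $M\setminus\mathring{M}$ that continuity alone cannot. The only difference is cosmetic: you make explicit the global smoothness of the $1$-forms $d\mu^i$ and $Jd\mu^i$ and the persistence of the anti-diagonal condition defining $DGK_{\omega}^{\mathbb{T}}(M)$, points the paper leaves implicit.
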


As in \cite{apostolov:3} (cf. Remark 4), conditions (C1), (C2) can be recasted as follows:

\begin{lemme}\label{ConditionsPrime} 
The conditions (C1), (C2) is equivalent to
\begin{itemize}
\item[(C1)] $\mathring{\Psi}-\Psi$ admits a smooth extension to $\Delta$,
\item[(C2)'] the smooth extension of $\Psi^{-1}\mathring{\Psi}$ on $\Delta$ is invertible.
\end{itemize}
\end{lemme}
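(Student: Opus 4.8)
The plan is to show that, modulo condition (C1) (which is shared by both formulations), condition (C2) and condition (C2)$'$ are equivalent. I would begin by observing that the algebraic content of the lemma lies in manipulating the matrix-valued expression $\Psi^T\mathring{\Psi}^{-1}\Psi - \Psi^T$ and relating its smooth extendability to the invertibility of the extension of $\Psi^{-1}\mathring{\Psi}$. The natural first step is to factor out the known smooth quantities. Recalling from Theorem \ref{PropPrincipale} that $\Psi = \mathrm{Hess}(\tau) + C$ with $\mathrm{Hess}(\tau)$ symmetric positive-definite and $C$ antisymmetric constant, I would exploit that the symmetric part $\Psi^s = \mathrm{Hess}(\tau)$ of $\Psi$ is invertible on $\mathring{M}$, and that the block-antidiagonal structure forces us to track carefully which matrices remain invertible up to the boundary $\Delta$.

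The key algebraic step is a factorization. I would write
\begin{equation*}
\Psi^T\mathring{\Psi}^{-1}\Psi - \Psi^T = \Psi^T\mathring{\Psi}^{-1}\bigl(\Psi - \mathring{\Psi}\bigr),
\end{equation*}
so that condition (C2) concerns the smooth extendability of $\Psi^T\mathring{\Psi}^{-1}(\Psi - \mathring{\Psi})$. Under (C1) the factor $\Psi - \mathring{\Psi}$ already extends smoothly to $\Delta$, so the problematic part is $\Psi^T\mathring{\Psi}^{-1}$. I would then rewrite $\Psi^T\mathring{\Psi}^{-1} = \Psi^T\Psi^{-1}(\Psi\mathring{\Psi}^{-1}) = \Psi^T\Psi^{-1}(\Psi^{-1}\mathring{\Psi})^{-1}$, reducing everything to the behaviour of $\Psi^{-1}\mathring{\Psi}$ and of the smooth object $\Psi^T\Psi^{-1}$. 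The forward implication (C2)$\Rightarrow$(C2)$'$ then follows from manipulating these factorizations to conclude that $(\Psi^{-1}\mathring{\Psi})^{-1}$ extends smoothly and hence that the extension of $\Psi^{-1}\mathring{\Psi}$ is invertible; the reverse implication runs the same identities backwards, using that invertibility of the extension guarantees the smooth extendability of its inverse.

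The step I expect to be the main obstacle is controlling the boundary behaviour: on $M\setminus\mathring{M}$ the matrix $\Psi$ itself generically degenerates (its entries blow up, reflecting the collapse of the torus orbits, exactly as in Guillemin's boundary formula $\Psi_{ij} = \tfrac{\partial^2}{\partial\mu^i\partial\mu^j}(\tfrac12\sum_j L_j\log L_j)$), so one cannot simply treat $\Psi$ and $\Psi^{-1}$ as smooth invertible factors near $\Delta$. The careful point is therefore to argue that although $\Psi$ does not extend, the \emph{combinations} appearing in the reformulation do, and that the singular parts cancel. Following the analogous computation in \cite{apostolov:3} (their Remark 4), I would identify which products of $\Psi$, $\Psi^{-1}$ and $\mathring{\Psi}$ have singularities that offset one another, verifying that the factorizations above are legitimate as identities of smooth matrix-valued functions on $\Delta$ once (C1) is imposed. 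Assembling these cancellations into the clean equivalence stated in the lemma is the crux; the surrounding algebra is then routine.
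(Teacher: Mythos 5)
Your factorization $\Psi^T\mathring{\Psi}^{-1}\Psi-\Psi^T=\Psi^T\mathring{\Psi}^{-1}\bigl(\Psi-\mathring{\Psi}\bigr)$ is the right starting point, but the proposal contains one outright algebraic error and one genuine gap. The error: $\Psi\mathring{\Psi}^{-1}=\bigl(\mathring{\Psi}\Psi^{-1}\bigr)^{-1}$, \emph{not} $\bigl(\Psi^{-1}\mathring{\Psi}\bigr)^{-1}$; these differ unless $\Psi$ and $\mathring{\Psi}$ commute, which there is no reason to expect. This is repairable (work with $\mathring{\Psi}\Psi^{-1}$ throughout and pass back to (C2)' via $\det\bigl(\mathring{\Psi}\Psi^{-1}\bigr)=\det\bigl(\Psi^{-1}\mathring{\Psi}\bigr)$, which shows by continuity on the dense interior that one extension is invertible on $\Delta$ if and only if the other is), but as written your chain of identities is false. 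The gap: the entire argument hinges on two boundary-regularity facts that you never establish and explicitly defer as "the crux": (a) $\Psi^{-1}$ extends smoothly to $\Delta$ --- because $\Psi^{ij}=\beta(K_i,K_j)$ where $\beta=\omega(\cdot,J\cdot)$ is a globally smooth $\mathbb{T}$-invariant tensor on $M$ and the $K_i$ are global vector fields, a fact the paper records in the proof of Proposition \ref{CoroM^o}; and (b) $\Psi^T\Psi^{-1}=I-2C\Psi^{-1}$, using that $\Psi-\Psi^T=2C$ is the \emph{constant} antisymmetric matrix of Theorem \ref{PropPrincipale}, which is what makes your "smooth object $\Psi^T\Psi^{-1}$" actually smooth up to $\partial\Delta$ even though $\Psi$ and $\Psi^T$ separately blow up there. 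Without (a), even the statement of (C2)' is unsupported, since the smooth extension of $\Psi^{-1}\mathring{\Psi}$ under (C1) is exactly $I+\Psi^{-1}\bigl(\mathring{\Psi}-\Psi\bigr)$. A plan that names the cancellation of singular parts as the main obstacle and then does not exhibit the cancellation has not proved the lemma.

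For the record, here is how the repaired route closes. Under (C1) and (a), both $N:=\Psi^{-1}\mathring{\Psi}=I+\Psi^{-1}\bigl(\mathring{\Psi}-\Psi\bigr)$ and $\mathring{\Psi}\Psi^{-1}=I+\bigl(\mathring{\Psi}-\Psi\bigr)\Psi^{-1}$ extend smoothly to $\Delta$. For (C2)$\Rightarrow$(C2)', observe
\begin{equation*}
N^{-1}=\mathring{\Psi}^{-1}\Psi=I+\Psi^{-T}\left(\Psi^T\mathring{\Psi}^{-1}\Psi-\Psi^T\right),
\end{equation*}
so (C2) and (a) show that $N^{-1}$ extends smoothly; since $NN^{-1}=I$ on $\mathring{\Delta}$, continuity gives $NN^{-1}=I$ on $\Delta$, i.e.\ the extension of $N$ is invertible. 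Conversely, (C2)' and the determinant identity make the extension of $\mathring{\Psi}\Psi^{-1}$ invertible, hence $\bigl(\mathring{\Psi}\Psi^{-1}\bigr)^{-1}$ is smooth on $\Delta$, and then
\begin{equation*}
\Psi^T\mathring{\Psi}^{-1}\Psi-\Psi^T=-\left(I-2C\Psi^{-1}\right)\left(\mathring{\Psi}\Psi^{-1}\right)^{-1}\left(\mathring{\Psi}-\Psi\right)
\end{equation*}
exhibits the (C2)-quantity as a product of three smoothly extending factors. Note finally that the paper offers no proof of Lemma \ref{ConditionsPrime}, deferring instead to Remark 4 of \cite{apostolov:3}, so there is no in-text argument to compare yours against; judged on its own, your proposal identifies the correct factorizations but stops short of the facts (a) and (b) that constitute the actual content.
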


As an immediate consequence of Theorem \ref{ThmC123}, we obtain

\begin{cor}\label{CorDéformation}
Let $(\mu^j,t^j)$ be admissible coordinates on $M$ and let $J_0\in K_{\omega}^{\mathbb{T}}(M)$ be an $\omega$-compatible complex structure of the form
\begin{equation*}
J_0=\sum\limits_{i,j=1}^mS_{ij}\frac{\partial}{\partial t^i}\otimes d\mu^j - \sum\limits_{i,j=1}^mS^{ij}\frac{\partial}{\partial \mu^i}\otimes dt^j
\end{equation*} 
for some positive definite symmetric matrix $S$. Consider also an arbitrary antisymmetric matrix $C$ and define a family of complex structures $\mathring{J}_t\in DGK^{\mathbb{T}}_\omega(\mathring{M})$ ($t\in\mathbb{R}$) by
\begin{equation}\label{Famille}
\mathring{J}_t=\sum\limits_{i,j=1}^m\Psi_{ij}(t)\frac{\partial}{\partial t^i}\otimes d\mu^j - \sum\limits_{i,j=1}^m\Psi^{ij}(t)\frac{\partial}{\partial \mu^i}\otimes dt^j,
\end{equation}
where
$$
\Psi(t)=S+tC.
$$
For sufficiently small values of $|t|$, the family $\mathring{J}_t$ is the restriction to $\mathring{M}$ of a family $J_t\in DGK_\omega^{\mathbb{T}}(M)$. 
\end{cor} 

\begin{proof}
By \cite{apostolov:3}, we know that conditions (C1), (C2)' and (C3) are verified for $\Psi(0)=S$. It suffices to notice that for $t$ small enough, $\Psi(t)$ continues to verify conditions (C1), (C2)', (C3) as $M$ is compact.
\end{proof}

By a  theorem of R. Goto \cite{goto:1, goto:2} (see also \cite{hitchin:5}), on a compact Kähler manifold $(M,\omega, J)$ equipped with a holomorphic Poisson bivector $\sigma\neq 0$, the trivial generalized Kähler structure $(\mathcal{J}_\omega,\mathcal{J}_J)$ can be deformed in the direction of $[\sigma\omega]\in H^{0,1}(M,T^{1,0})$\footnote{If we see  $\sigma=\sum\limits_{k,\ell}\sigma^{k\ell}\frac{\partial}{\partial z^k}\otimes\frac{\partial}{\partial z^{\ell}}$ and $\omega=\sum\limits_{k,\ell}\omega_{k\overline{\ell}}dz^k\otimes d\overline{z}^{\ell}$ as bundle maps $\Lambda^{1,0}\rightarrow T^{1,0}$ and $T^{0,1}\rightarrow \Lambda^{1,0}$ respectively, then the class $[\sigma\omega]$ is represented by $\sigma\circ\omega = \sum\limits_{j,k,\ell}\omega_{\ell \overline{j}}\sigma^{\ell k}d\overline{z}^j\otimes\frac{\partial}{\partial z^k}\in\Omega^{0,1}(M,T^{1,0})$.} into a nontrivial generalized Kähler structure $(\mathcal{J}_1(t),\mathcal{J}_2(t))$. 
More precisely, the complex structures $J_{\pm}(t)$ of the underlying hermitian structures depend analytically of $t$ and if $z^1,\ldots, z^m$ are local holomorphic coordinates for $J_{+}(0)$ with respect to which we have 
\begin{equation*}
\left.\frac{d}{dt}\right|_{t=0}J_{+}(t)\frac{\partial}{\partial \overline{z}^j}=\sum\limits_{k=1}^m\alpha_{\overline{j}k}\frac{\partial}{\partial z^k}+\beta_{\overline{j}\overline{k}}\frac{\partial}{\partial \overline{z}^k},
\end{equation*}
then the Kodaira-Spencer class of the deformation $J_+(t)$ is locally represented by the tensor $\sum\limits_{j,k=1}^m\alpha_{\overline{j}k}d\overline{z}^j\otimes\frac{\partial}{\partial z^k}$ with $\alpha_{\overline{j}k}=\sum\limits_{\ell=1}^m\omega_{\ell \overline{j}}\sigma^{\ell k}$. The first variation of $J_-(t)$ yields the opposite class.

\begin{prop}\label{propGoto}
The Kodaira-Spencer class of the deformation $J_t$ of  Corollary \ref{CorDéformation} is $[\sigma\omega]$ where $\sigma$ is the  holomorphic Poisson structure given by 
$$
\sigma = 2\sum\limits_{j,k=1}^m C_{jk}K_j^{\;1,0}\otimes K_k^{\;1,0}.
$$
\end{prop}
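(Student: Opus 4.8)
The plan is to compute the Kodaira--Spencer representative of the family $J_t$ directly in the holomorphic frame of $J_0$ and to check that it agrees, as a $T^{1,0}$-valued $(0,1)$-form, with the contraction $\sigma\circ\omega$ described in the footnote; since the two representatives turn out to be equal on the nose, equality of the corresponding classes follows at once.

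First I would fix the holomorphic frame. Writing $z^j=u^j+it^j$ for the $J_0$-holomorphic coordinates of Proposition \ref{LemmeCoordonn�esCX}, one checks $K_j=\frac{\partial}{\partial t^j}=i\frac{\partial}{\partial z^j}-i\frac{\partial}{\partial \overline z^j}$, hence $K_j^{\,1,0}=i\frac{\partial}{\partial z^j}$ and therefore $\sigma=-2\sum_{j,k}C_{jk}\frac{\partial}{\partial z^j}\otimes\frac{\partial}{\partial z^k}$, i.e. $\sigma^{\ell k}=-2C_{\ell k}$ in this frame. Because $C$ is antisymmetric this is a genuine section of $\Lambda^2T^{1,0}$; because the $\frac{\partial}{\partial z^j}$ are holomorphic (the torus preserves $J_0$) and $C$ is constant, $\sigma$ is holomorphic and $[\sigma,\sigma]=0$, so $\sigma$ is a holomorphic Poisson structure and the class $[\sigma\omega]$ is well defined.

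Next I would compute $\dot J:=\frac{d}{dt}\big|_{0}J_t$. Differentiating $J_t^2=-\mathrm{Id}$ gives $\dot JJ_0+J_0\dot J=0$, so $\dot J$ sends $T^{0,1}$ into $T^{1,0}$; in particular the $\overline\partial$-component $\beta$ vanishes automatically, and the Kodaira--Spencer representative is simply the matrix of $\dot J\colon T^{0,1}\to T^{1,0}$ in the coordinate frames. From $\Psi(t)=S+tC$ one has $\dot\Psi=C$ and $\frac{d}{dt}\big|_0\Psi^{-1}=-S^{-1}CS^{-1}$, whence $\dot J\frac{\partial}{\partial\mu^i}=\sum_kC_{ki}\frac{\partial}{\partial t^k}$ and $\dot J\frac{\partial}{\partial t^j}=\sum_k(S^{-1}CS^{-1})_{kj}\frac{\partial}{\partial\mu^k}$. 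Substituting $\frac{\partial}{\partial\overline z^j}=\tfrac12\big(\sum_iS^{ij}\frac{\partial}{\partial\mu^i}+i\frac{\partial}{\partial t^j}\big)$, applying $\dot J$, and re-expanding in the holomorphic frame via $\frac{\partial}{\partial t^k}=i\frac{\partial}{\partial z^k}-i\frac{\partial}{\partial\overline z^k}$ and $\frac{\partial}{\partial\mu^k}=\sum_lS_{kl}\big(\frac{\partial}{\partial z^l}+\frac{\partial}{\partial\overline z^l}\big)$, the $\frac{\partial}{\partial\overline z^k}$-contributions cancel (as they must, by the previous remark) and one is left with $\dot J\frac{\partial}{\partial\overline z^j}=i\sum_k(CS^{-1})_{kj}\frac{\partial}{\partial z^k}$, i.e. $\alpha_{\overline jk}=i(CS^{-1})_{kj}$.

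Finally I would compute $\omega$ in this frame. Expanding $\omega=\sum_pd\mu^p\wedge dt^p$ using $d\mu^p=\sum_jS^{pj}du^j$ together with the expressions of $du^j,dt^j$ in terms of $dz^j,d\overline z^j$ yields $\omega_{\ell\overline j}=\tfrac{i}{2}S^{\ell j}$ in the bundle-map convention of the footnote, and then $\sigma\circ\omega\big(\frac{\partial}{\partial\overline z^j}\big)=\sum_{k,\ell}\omega_{\ell\overline j}\sigma^{\ell k}\frac{\partial}{\partial z^k}=i\sum_k(CS^{-1})_{kj}\frac{\partial}{\partial z^k}$, where the antisymmetry of $C$ is used to rewrite $\sum_\ell C_{\ell k}S^{\ell j}$ as $-(CS^{-1})_{kj}$. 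This is exactly $\dot J\frac{\partial}{\partial\overline z^j}$, so the Kodaira--Spencer representative of $J_t$ equals the representative $\sigma\circ\omega$ of $[\sigma\omega]$, which proves the claim. I expect the main obstacle to be purely the bookkeeping of the three frames $(u,t)$, $(\mu,t)$ and $(z,\overline z)$ together with the sign conventions implicit in "$\omega$ as a bundle map $T^{0,1}\to\Lambda^{1,0}$" and in the contraction $\sigma\circ\omega$; it is the antisymmetry of $C$ that simultaneously makes $\sigma$ an honest bivector and reconciles these signs in the final contraction.
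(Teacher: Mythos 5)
Your proposal is correct and follows essentially the same route as the paper's own proof: both compute the first-order variation $\left.\frac{d}{dt}\right|_{t=0}J_t\frac{\partial}{\partial\overline{z}^j}=i\sum_{k}(CS^{-1})_{kj}\frac{\partial}{\partial z^k}$ in the $J_0$-holomorphic frame and identify it with $\sum_{\ell}\omega_{\ell\overline{j}}\sigma^{\ell k}$ via $\omega_{\ell\overline{j}}=\tfrac{i}{2}S^{\ell j}$ and $\sigma^{\ell k}=-2C_{\ell k}$. The only differences are cosmetic: you differentiate $\Psi(t)$ in the $(\mu,t)$ frame before passing to $(z,\overline{z})$ (the paper substitutes into the $(u,t)$ frame first), and you make explicit the vanishing of the $(0,1)$-part of $\dot J$ and the holomorphy of $\sigma$, which the paper leaves implicit.
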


\begin{proof}
Let $z^j=u^j+it^j$ be the complex coordinates defined by $J_0$ as in Proposition \ref{LemmeCoordonnéesCX}. By virtue of \eqref{DéfPsi}, we can write
$$
J_t=\sum\limits_{k,\ell,p=1}^m\Psi_{k\ell}(t)S^{\ell p}\frac{\partial}{\partial t^k}\otimes du^{p} - \Psi^{k\ell}(t)S_{pk}\frac{\partial}{\partial u^p}\otimes dt^{\ell}.
$$
Using the relations 
$$
du^p\left(\frac{\partial}{\partial \overline{z}^{j}}\right)=\tfrac{1}{2}\delta_{pj}, \quad dt^{\ell}\left(\frac{\partial}{\partial \overline{z}^{j}}\right)=\tfrac{i}{2}\delta_{\ell j}
$$
as well as 
$$
(\Psi^{-1})'(0)=-S^{-1}CS^{-1},
$$
we compute
\begin{equation}\label{DéformationDePremierOrdre}
\left.\frac{d}{dt}\right|_{t=0}J_t\frac{\partial}{\partial \overline{z}^{j}}=\sum\limits_{k=1}^mi(CS^{-1})_{kj}\frac{\partial}{\partial z^k};
\end{equation}
\i.e. $\alpha_{\overline{j}k}=i(CS^{-1})_{kj}$. On the one hand, we have
$$
\omega=\frac{i}{2}\sum\limits_{k,\ell=1}^mS^{k \ell} dz^k\otimes d\overline{z}^\ell,
$$
and using the relation $\frac{\partial}{\partial z^j}=-iK_j^{\; 1,0}$, we can write locally
$$
\sigma=-2\sum\limits_{k,\ell=1}^mC_{k\ell}\frac{\partial}{\partial z^k}\otimes\frac{\partial}{\partial z^\ell}.
$$
It follows that $\sigma$ is a holomorphic Poisson structure, and
$$
\sum\limits_{\ell=1}^m\omega_{\ell\overline{j}}\sigma^{\ell k} =i(CS^{-1})_{kj}.
$$
\end{proof}

\begin{rem}\label{RemGualtieri}
{\rm
It has been observed in \cite{apostolov:4} in dimension 4 and by N. Hitchin \cite{hitchin:1} in general that for any generalized Kähler structure $(g,J_+,J_-,b)$, the bivector $P=\frac{1}{2}[J_+,J_-]g^{-1}:T^*M\rightarrow TM$ gives rise to the holomorphic Poisson structures
\begin{equation}\label{PoissonHolo2}
\sigma_\pm=\left([J_+,J_-]g^{-1}\right)^{2,0}=P-iJ_\pm P
\end{equation}
The holomorphic Poisson structure $\sigma_t$ associated with the family \eqref{Famille}
$$
\sigma_t=-4\sum\limits_{j,k=1}^m\left(tC+tS(\Psi(t)^{T})^{-1}CS^{-1}\Psi(t)^{T}\right)_{jk}\frac{\partial}{\partial z^j}\otimes\frac{\partial}{\partial z^k}.
$$
It is not difficult to see that in dimension 4, this reduces to $\sigma_t=4t\sigma$, whereas in general, we have $\sigma_t=4t\sigma+ O(t^2)$.
In the spirit of \cite{hitchin:2, gualtieri:2}, another way to produce an element of $GK_{\omega}^{\mathbb{T}}(M)$ by deformation of a toric Kähler structure $J_0$ is to start with a $\mathbb{T}$-invariant vector field $X$ and consider the family of 2-forms
$$
\omega_t=\frac{1}{t}\int_0^t(\varphi^X_s)^*\omega ds,
$$
where $\varphi^X_s$ is the flow of $X$. We have $\omega_0=\omega$ and for all $t>0$, $\omega_t$ is a closed 2-form, tamed by $J_0$ for $t$ small enough. In fact, if $X$ is holomorphic, $(\varphi^X_s)_*$ commutes with $J$, and the 2-form $\omega_t$ is symplectic for all $t>0$. For example, if $X$ is of the form $X=\sum\limits_{j=1}^mX^jK_j$, then its flow is of the form
$$
\varphi^X_s(\mu^1,\ldots,\mu^m,t^1,\ldots,t^m)=\left(\mu^1,\ldots,\mu^m,t^1+sX^1,\ldots,t^m+sX^m\right),
$$
from where we compute
$$
\omega_t=\omega+\frac{t}{2}\sum\limits_{j,k=1}^m\frac{\partial X^j}{\partial \mu^k}d\mu^j\wedge d\mu^k.
$$
While the deformation in Corollary \ref{CorDéformation} applies to any compact symplectic toric manifold, there are several results in the literature providing such deformations in special cases. For instance, Y. Lin and S. Tolman \cite{lin:1} developped a notion of symplectic reduction for generalized Kähler structures (see also \cite{hu:1, gualtieri:4}).They show that under certain conditions on the moment polytope $\Delta$, there exists a deformation of generalized Kähler structures $(\mathcal{J}_{\omega},\mathcal{J}_{\epsilon})$ (in the sense introduced by M. Gualtieri \cite{gualtieri:1}) of the standard Kähler structure $(\omega, J)$ on $\mathbb{C}^d$ which descends to the symplectic quotient to define a generalized Kähler structure $(\mathcal{J}_{\omega_{\mathrm{red}}},\hat{\mathcal{J}}_{J})$, where $\omega_{\mathrm{red}}$ is the reduced symplectic form. For instance, this applies to the Hirzebruch surfaces. Similarly, in \cite{gualtieri:4} are obtained explicit deformations of the standard Fubini-Study Kähler structure $(\omega_{FS},J)$ on $\mathbb{C}P^2$. The results in \cite{hitchin:2} give rise to toric deformations in the case of toric Fano manifolds.

}
\end{rem}

\section{The generalized Hermitian scalar curvature}\label{ChapMomap}

In this section, we compute the moment map for the action of a subgroup of $\mathrm{Ham}(M,\omega)$ on $GAK_{\omega}(M)$ and on $DGK_{\omega}(M)$ in admissible coordinates. This generalizes the formulae in \cite{abreu:2, donaldson:2, lejmi:1} for the hermitian scalar curvature of a toric almost Kähler metric and suggests a definition of a "scalar curvature" for generalized Kähler structures in $DGK_{\omega}^{\mathbb{T}}(M)$. In section \ref{SectionExtremal}, we use these definitions to introduce a natural notion of extremality in $GAK_{\omega}(M)$. In the case of $DGK_{\omega}^{\mathbb{T}}(M)$, we show that extremality is equivalent to the generalized Hermitian scalar curvature being an affine function of the moment coordinates. This generalizes an important theorem of M. Abreu \cite{abreu:1}.

Let $(M,\omega, \mathbb{T})$ be a compact symplectic toric manifold of real dimension $2m$ with moment map $\mu:M\rightarrow \Delta\subset \mathfrak{t}^*$. We adopt the viewpoint developped in Section \ref{section3} and regard elements of $GAK^{\mathbb{T}}_\omega(M)$ as $\mathbb{T}$-invariant complex endomorphisms of the complexified tangent bundle. The group $\mathrm{Ham}^{\mathbb{T}}(M,\omega)$ of $\mathbb{T}$-invariant hamiltonian diffeomorphisms acts on $GAK^{\mathbb{T}}_\omega(M)$ with infinitesimal action $V^{\sharp}$ given by \eqref{Vfondamental} for $V\in \mathfrak{ham}^{\mathbb{T}}(M,\omega)$. The Lie algebra $\mathfrak{ham}^{\mathbb{T}}(M,\omega)$ consists of the $\mathbb{T}$-invariant hamiltonian vector fields on $M$. Seen as a space of functions by means of the correspondence \eqref{ham-Fonctions}, this is simply the $\mathbb{T}$-invariant elements of $C^{\infty}_0(M)$. If $V=\mathrm{grad}_{\omega}h$ for some function $h\in C^{\infty}_0(M)^{\mathbb{T}}$, then $V^{\sharp}$ takes the following form relative to admissible coordinates $(\mu^j,t^j)$:
\begin{equation}\label{ChampInduit}
V^{\sharp}_{\;K}=\sum\limits_{j=1}^m(dh_{,j}\circ K)\otimes\frac{\partial}{\partial t^j}-dh_{,j}\otimes K\frac{\partial}{\partial t^j}.
\end{equation}

Let $C^{\infty}_{c,0}(M)^{\mathbb{T}}\subset C^{\infty}_0(M)^{\mathbb{T}}$ denote the ideal of functions with support in $\mathring{M}$ and $\mathrm{Ham}_c^{\mathbb{T}}(M,\omega)\mathrel{\unlhd}\mathrm{Ham}^{\mathbb{T}}(M,\omega)$ the corresponding connected subgroup.

\begin{thm}\label{ThmMomapActionRéduite}
The action of $\mathrm{Ham}_c^{\mathbb{T}}(M,\omega)$ on $GAK_{\omega}^{\mathbb{T}}(M)$ is hamiltonian with moment map $\nu:GAK^{\mathbb{T}}_\omega(M)\rightarrow (C^{\infty}_{c,0}(M)^{\mathbb{T}})^*$ given by
\begin{equation}\label{MomapActionRéduite}
\nu^f(K)=-\int_{\mathring{M}}f\left(\sum\limits_{i,j=1}^m\frac{\partial^2 \mathfrak{Re}Q_{ij}}{\partial \mu^i\partial\mu^j}\right)v_\omega,
\end{equation}
where $Q_{ij}=\omega(KK_i,K_j)$. For $J\in DGK_{\omega}^{\mathbb{T}}(M)$, the following alternative expression holds:
\begin{equation}\label{ExprAlt}
\nu^f(J)=\int_{\mathring{M}}f\left(\sum\limits_{i,j=1}^m\frac{\partial^2 S^{ij}}{\partial \mu^i\partial\mu^j}\right)v_\omega,
\end{equation}
where $S_{ij}=\tau_{,ij}$ for $\tau\in C^{\infty}(\mathring{\Delta})$ the symplectic potential of $J$ and $S^{ij}=(S^{-1})_{ij}$.
\end{thm}

\begin{proof}
Formula \eqref{ChampInduit} together with the fact that $M\backslash\mathring{M}=\mu^{-1}(\partial\Delta)$ has measure 0 allows us to write
\begin{flalign*}
\Omega_K(V_{\;K}^{\sharp},\dot{K}) & = \frac{1}{2}\int_M\mathrm{tr}(K\circ V_{\;K}^{\sharp}\circ \dot{K})v_\omega \\
& = \sum\limits_{j=1}^m\int_{\mathring{M}}\mathrm{tr}\left((df_{,j}\circ \dot{K})\otimes \frac{\partial}{\partial t^j}\right)v_\omega\\
& = \sum\limits_{j=1}^m\int_{\mathring{M}}df_{,j}\left(\dot{K}\frac{\partial}{\partial t^j}\right)v_\omega.
\end{flalign*}
Since $\mathbb{T}$ acts freely on $\mathring{M}$ (with $\mathring{\Delta}$ identified with the orbit space), $\mu:\mathring{M}\rightarrow \mathring{\Delta}$ defines a trivial principal torus bundle: $\mathring{M}\cong\mathring{\Delta}\times \mathbb{T}$. We have $v_{\omega}=(-1)^{m-1}dx^1\wedge\ldots\wedge dx^m\wedge dt^1\wedge\ldots\wedge dt^m$ so if we set $C_m=\int_{\mathbb{T}}(-1)^{m-1}dt^1\wedge\ldots\wedge dt^m$, we can write  
\begin{equation}\label{FormeTemporaire}
\Omega_K(V_{\;K}^{\sharp},\dot{K}) = C_m\sum\limits_{j=1}^m\int_{\mathring{\Delta}}df_{,j}\left(\dot{K}\frac{\partial}{\partial t^j}\right)v_0,
\end{equation}
where $v_0=dx^1\wedge\ldots\wedge dx^m$. 
If the matrix representation of $K$ relative to the basis $\left(\frac{\partial }{\partial \mu},\frac{\partial }{\partial t}\right)$ de $T^{\mathbb{C}}\mathring{M}$ is
$$
K=
\left(
\begin{array}{cc}
P & Q \\
R & S
\end{array}
\right),
$$
then
\eqref{FormeTemporaire} takes the form
\begin{equation}
\Omega_K(V^{\sharp}_K,\dot{K})=C_m\sum\limits_{i,j=1}^m\int_{\mathring{\Delta}}f_{,ij}\dot{Q}_{ij}v_0.
\end{equation}
This computation suggests that the moment map is
\begin{equation}\label{FormuleNu}
\nu^f(K)=-C_m\sum\limits_{i,j=1}^m\int_{\mathring{\Delta}}f_{,ij}Q_{ij}v_0.
\end{equation}
Here, we observe that the functions $Q_{ij}$ are well-defined and smooth on $\Delta$ since we can write $Q_{ij}=\omega(KK_j,K_i)$ which is a smooth and $\mathbb{T}$-invariant function on $M$. Consequently, if $f$ has support in $\mathring{\Delta}$, a double integration by parts allows us to shift the derivatives over to $Q_{ij}$, and thus
\begin{flalign*}
\nu^f(K) & =-C_m\sum\limits_{i,j=1}^m\int_{\mathring{\Delta}}fQ_{ij,ij}v_0\\
& = -\sum\limits_{i,j=1}^m\int_{\mathring{M}}fQ_{ij,ij}v_\omega.
\end{flalign*}
It remains to check the equivariance of $\nu$, namely the relation $\nu^{\varphi\cdot f}(\varphi\cdot K)=\nu^f(K)$ for $\varphi\in \mathrm{Ham}^{\mathbb{T}}(M,\omega)$. Let $\varphi$ be the flow at time 1 of $\mathrm{grad}_{\omega}h$ for $h\in C^{\infty}_0(M)^{\mathbb{T}}$. As in Remark \ref{RemGualtieri} (2), we compute
$$
\varphi_*=
\left(
\begin{array}{cc}
I & 0 \\
(h_{,ij}) & I
\end{array}
\right),
$$
so in particular, $\varphi$ preserves the fields vector $K_i$. It follows that $\varphi$ acts on $K$ by changing $Q_{ij}$ to $\omega(\varphi_*K\varphi^{-1}_*K_j,K_i)=Q_{ij}\circ \varphi^{-1}=\varphi\cdot Q_{ij}$.
Next, using the naturality of the Lie derivative on $(\varphi\cdot Q_{ij})_{,ij}=\mathcal{L}_{\partial\slash\partial \mu^i}\mathcal{L}_{\partial\slash\partial \mu^j}(\varphi\cdot Q_{ij})$, we get
$$
(\varphi\cdot Q_{ij})_{,ij} = \varphi\cdot \mathcal{L}_{\varphi^{-1}_*\partial\slash\partial \mu^i}\mathcal{L}_{\varphi^{-1}_*\partial\slash\partial \mu^j}Q_{ij},
$$
where
$$
\varphi^{-1}_*\frac{\partial}{\partial \mu^i} = \frac{\partial}{\partial \mu^i}-\sum\limits_{k=1}^{m}h_{,ki}\frac{\partial}{\partial t^k}.
$$
But the functions $Q_{ij}$ are $\mathbb{T}$-invariant, so $\mathcal{L}_{\partial\slash\partial t^k}Q_{ij}=0$ and it remains
$$
(\varphi\cdot Q_{ij})_{,ij} = \varphi\cdot (Q_{ij,ij}).
$$
Since $\varphi$ preserves the symplectic volume form $v_\omega$, we have
$$
\nu^{\varphi\cdot f}(\varphi\cdot K)=-\sum\limits_{i,j=1}^m\int_{\mathring{M}}\varphi\cdot (fQ_{ij,ij}v_{\omega})=\nu^{f}(K).
$$
Finally, note that the expression $\sum\limits_{i,i=1}^m Q_{ij,ij}$ is {\em real}. This follows from the fact that the imaginary part of $K$ is $\omega$-self-dual (cf. \eqref{conditionsAB}), and so the imaginary part of $Q_{ij}=\omega(KK_j,K_i)$ is antisymmetric. We thus get \eqref{MomapActionRéduite}.

To obtain \eqref{ExprAlt}, recall equation \eqref{J-AB} to obtain the expression $\mathfrak{Re}Q_{ij}=-\omega((J^a)^{-1}K_j,K_i)$. In terms of the identification $K\sim(S,C)$ of Theorem \ref{PropPrincipale}, we obtain $\mathfrak{Re}Q_{ij}=-S^{ij}$. 
\end{proof}

Comparing the results of Theorem \ref{ThmMomapActionRéduite} with \eqref{ThmDonaldson}, we are naturally led to the following definition.

\begin{déf}\label{DéfCourburesGénéralisées}
The {\bf generalized Hermitian scalar curvature} of $K\in GAK_{\omega}^{\mathbb{T}}(M)$ is
\begin{equation}\label{ScalGK}
u_{GK}(K)=\sum\limits_{i,j=1}^m\frac{\partial^2 \mathfrak{Re}Q_{ij}}{\partial x^i\partial x^j},
\end{equation}
where $Q_{ij}=\omega(KK_i,K_j)$.
\end{déf}

\begin{rem}
{\rm
In terms of the characterisation of Theorem \ref{PropPrincipale}, the Hermitian scalar curvature of an element $\mathcal{J}$ of $DGK_{\omega}^{\mathbb{T}}(\mathring{M})$ is of the form
\begin{equation}\label{ScalGKPot}
u_{GK}(\mathcal{J})=-\sum\limits_{i,j=1}^m \frac{\partial^2\tau^{ij}}{\partial \mu^i\partial\mu^j},
\end{equation}
where $\tau^{ij} = (\mathrm{Hess}(\tau)^{-1})_{ij}$.
}
\end{rem}


\begin{rem}
{\rm
\begin{itemize}
 \item[(1)] The function $u_{GK}(K)$ is well-defined globally, since $Q_{ij}=\omega(KK_i,K_j)\in C^{\infty}(M)^{\mathbb{T}}\cong C^{\infty}(\Delta)$. However, at present, we can only be sure that $S^{ij}$ defines an element of $C^{\infty}(M)^{\mathbb{T}}$ in dimension 4 (cf. section \ref{Compactification en dimension 4}).
 \item[(2)] When $J\in K_{\omega}^{\mathbb{T}}(M)$, formula \eqref{ScalGKPot} reduces to the formula found by M. Abreu \cite{abreu:1} for the Riemannian scalar curvature. Similarly, when $K\in AK^{\mathbb{T}}_\omega(M)$, formula \eqref{ScalGK} reduces to the formula found by S. K. Donaldson \cite{donaldson:2} and more generally, by M. Lejmi \cite{lejmi:1} for the Hermitian scalar curvature.
\end{itemize}
}
\end{rem}

\section{Extremal generalized Kähler structures}\label{SectionExtremal}

Let $(M,\omega, \mathbb{T})$ be a compact symplectic toric manifold of real dimension $2m$ with moment map $\mu:M\rightarrow \Delta\subset \mathfrak{t}^*$. It is clear that the moment map in equation \eqref{ThmDonaldson} can be replaced by $\nu^f(J)=-\int_M f(u_J-\overline{u}_J)v_{\omega}$ (for $\overline{u}_J=\int_Mu_Jv_{\omega}$) so that with respect to the identifications discussed in Section \ref{section3}, $\nu$ can be seen as the map $J\mapsto -u_J+\overline{u}_J\in C^{\infty}_0(M)$. A simple computation reveals that the critical points of $\Vert\nu\Vert^2:K_{\omega}(M)\rightarrow\mathbb{R}$ are precisely the extremal Kähler metrics in the sense of E. Calabi \cite{calabi:1}. Indeed, we have
$$
d(\Vert\nu\Vert^2)_J(\dot{J}) = 2(\nu(J),d\nu_J(\dot{J}))=2\Omega_J((\mathrm{grad}_{\omega}u_J)^{\sharp}, \dot{J})=-2\Omega_J(\mathcal{L}_{\mathrm{grad}_{\omega}u_J}J,\dot{J}),
$$
Thus, $J$ is a critical point if and only if $\mathcal{L}_{\mathrm{grad}_{\omega}u_J}J=0$. Since $J$ is $\omega$-compatible, this is equivalent to saying that $\mathrm{grad}_{\omega}u_J$ is Killing. But as is well known \cite{calabi:1}, for fixed $J$ this condition also characterizes the Kähler metrics in a given DeRham class $a\in H^2_{dR}(M)$ which are critical points of the Calabi functional $g\mapsto \int_M u_g^{\;2}v_g$. More generally, the calculation above holds true on $GAK_{\omega}^{\mathbb{T}}(M)$ provided that $\nu$ is replaced with the moment map from Theorem \ref{ThmMomapActionRéduite}. In light of this, the following definition is natural.

\begin{déf}
Let $(M,\omega,\mathbb{T},\mu)$ be a compact symplectic toric manifold. An element $K\in GAK_{\omega}^{\mathbb{T}}(M)$ is called {\bf extremal} if it is a critical point of the functional $K\mapsto\int_M (u_{GK}(K)-\overline{u_{GK}}(K))^2v_\omega$, where $\overline{u_{GK}}(K) = \int_Mu_{GK}(K)v_\omega$. An equivalent condition is
$$
\mathcal{L}_{\mathrm{grad}_{\omega}u_{GK}(K)}K=0.
$$
\end{déf}

M. Abreu has observed \cite{abreu:1} that the toric Kähler metrics which are extremal are precisely those whose scalar curvature depends in an affine manner upon the moment coordinates of Proposition \ref{LemmeCoordonnéesCX}. This characterization admits a natural extension to $DGK_{\omega}^{\mathbb{T}}(M)$:

\begin{prop}\label{CaracterisationsExtremal}
For $J\in DGK_{\omega}^{\mathbb{T}}(M)$, the following statements are equivalent.
\begin{itemize}
\item[(1)] $J$ is extremal.
\item[(2)] $\mathcal{L}_{\mathrm{grad}_{\omega}u_{GK}(J)}J=0$.
\item[(3)] The vector field $\mathrm{grad}_{\omega}u_{GK}(J)$ is Killing with respect to $g=\omega(\cdot,J\cdot)^{s}$ and also preserves the 2-form $b=-\omega(\cdot,J\cdot)^{a}$.
\item[(4)] $u_{GK}(J)$ is an affine function in the momentum variables $(\mu^1,\ldots,\mu^m)$.
\end{itemize}
\end{prop}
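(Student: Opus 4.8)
The plan is to organize the proof around statement (2), deriving (1)$\Leftrightarrow$(2) from the moment-map formalism already set up, (2)$\Leftrightarrow$(3) from a pointwise tensor identity, and (2)$\Leftrightarrow$(4) by an explicit computation in admissible coordinates, this last being the substantive step. Throughout I write $h=u_{GK}(J)$ — a globally defined smooth $\mathbb{T}$-invariant function by Definition \ref{D�fCourburesG�n�ralis�es} and the global smoothness of the $Q_{ij}$ — and set $V=\mathrm{grad}_{\omega}h$. Since $h$ is $\mathbb{T}$-invariant, $V$ is a $\mathbb{T}$-invariant hamiltonian vector field, so $\mathcal{L}_V\omega=0$, and by \eqref{Vfondamental} the infinitesimal action is $V^{\sharp}=-\mathcal{L}_VJ$. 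The equivalence (1)$\Leftrightarrow$(2) is then exactly the content of the definition of extremality, justified by the variational computation preceding the proposition: differentiating $\|\nu\|^2$ (for the moment map $\nu$ of Theorem \ref{ThmMomapActionR�duite}) along a tangent vector $\dot J$ produces $-2\,\Omega_J(\mathcal{L}_VJ,\dot J)$, which vanishes for all $\dot J$ precisely when $\mathcal{L}_VJ=0$.

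For (2)$\Leftrightarrow$(3), I would set $\beta=\omega(\cdot,J\cdot)$, so that $g=\beta^{s}$ and $b=-\beta^{a}$, and establish the pointwise identity
\begin{equation*}
(\mathcal{L}_V\beta)(X,Y)=(\mathcal{L}_V\omega)(X,JY)+\omega\bigl(X,(\mathcal{L}_VJ)Y\bigr),
\end{equation*}
which follows by expanding both Lie derivatives and using $[V,JY]-J[V,Y]=(\mathcal{L}_VJ)Y$. Because $\mathcal{L}_V\omega=0$ and $\omega$ is non-degenerate, this gives $\mathcal{L}_VJ=0\Leftrightarrow\mathcal{L}_V\beta=0$. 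Since $\mathcal{L}_V$ commutes with the splitting into symmetric and antisymmetric parts, $\mathcal{L}_V\beta=0$ is equivalent to the simultaneous vanishing $\mathcal{L}_Vg=0$ and $\mathcal{L}_Vb=0$, i.e. to (3).

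For (2)$\Leftrightarrow$(4), the computational heart, I would work in admissible coordinates for $J$ (Proposition \ref{PropPSI}), so that $J$ has the anti-diagonal form \eqref{FormeBlocAntidiagonale} with $\Psi=\mathrm{Hess}(\tau)+C$ by Theorem \ref{PropPrincipale}. As in the flow computation inside the proof of Theorem \ref{ThmMomapActionR�duite}, $\mathbb{T}$-invariance of $h$ gives $V=\sum_k h_{,k}\,\frac{\partial}{\partial t^k}$, whence
\begin{equation*}
\Bigl[V,\tfrac{\partial}{\partial\mu^i}\Bigr]=-\sum_k h_{,ki}\tfrac{\partial}{\partial t^k},\qquad \Bigl[V,\tfrac{\partial}{\partial t^j}\Bigr]=0,\qquad \mathcal{L}_V d\mu^j=0,\qquad \mathcal{L}_V dt^j=\sum_l h_{,jl}\,d\mu^l,
\end{equation*}
while $\mathcal{L}_V\Psi_{ij}=\mathcal{L}_V\Psi^{ij}=0$ since these depend on $\mu$ only. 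Substituting into \eqref{FormeBlocAntidiagonale}, the $\frac{\partial}{\partial t}\otimes d\mu$ block of $\mathcal{L}_VJ$ drops out identically and one is left with
\begin{equation*}
\mathcal{L}_VJ=\sum_{i,j,k}\Psi^{ij}h_{,ki}\,\tfrac{\partial}{\partial t^k}\otimes dt^j-\sum_{i,j,l}\Psi^{ij}h_{,jl}\,\tfrac{\partial}{\partial\mu^i}\otimes d\mu^l,
\end{equation*}
whose two blocks are the matrix products $\mathrm{Hess}(h)\,\Psi^{-1}$ and $\Psi^{-1}\mathrm{Hess}(h)$. As $\Psi^{-1}$ is invertible, $\mathcal{L}_VJ=0$ on the dense open set $\mathring M$ if and only if $\mathrm{Hess}(h)=0$, i.e. $u_{GK}(J)$ is affine in $(\mu^1,\dots,\mu^m)$; by continuity this is equivalent to $\mathcal{L}_VJ=0$ on all of $M$.

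The individual computations are routine, so the only points demanding care are two bookkeeping issues. The first is global well-definedness: for the passage "on $\mathring M$ $\Rightarrow$ on $M$" in (2)$\Leftrightarrow$(4) to be legitimate, one needs $h=u_{GK}(J)$, and hence $V$, to be globally smooth, which rests on the global smoothness of the $Q_{ij}$ noted after Definition \ref{D�fCourburesG�n�ralis�es}. The second, which I expect to be the genuinely delicate point, underlies (1)$\Leftrightarrow$(2): the moment map of Theorem \ref{ThmMomapActionR�duite} is constructed only for the compactly supported subgroup $\mathrm{Ham}_c^{\mathbb{T}}(M,\omega)$, whereas $\mathrm{grad}_\omega u_{GK}$ need not be supported in $\mathring M$, so one must verify that no boundary contributions along $M\setminus\mathring M$ spoil the identification $d(\|\nu\|^2)_J=-2\,\Omega_J(\mathcal{L}_VJ,\cdot)$. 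This is to be handled exactly as in the Kähler discussion preceding the proposition.
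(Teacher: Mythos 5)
Your overall architecture is correct, and the place where you genuinely depart from the paper is the step $(2)\Leftrightarrow(4)$. The paper splits it in two: $(4)\Rightarrow(2)$ is immediate, since $u_{GK}(J)$ affine gives $V=\sum_j a_jK_j$ and $\mathcal{L}_{K_j}J=0$ by $\mathbb{T}$-invariance; and $(3)\Rightarrow(4)$ is proved metrically, by writing $DV^{\flat}=\sum_{j,k}\frac{\partial^2 u_{GK}(J)}{\partial\mu^j\partial\mu^k}d\mu^k\otimes K_j^{\flat}+\sum_j\frac{\partial u_{GK}(J)}{\partial\mu^j}DK_j^{\flat}$, using that the $K_j$ are Killing for $g$ and that $K_j^{\flat}=\sum_{\ell}(\Psi^{-1})^s_{\;j\ell}dt^{\ell}$ places the residual term entirely in $d\mu\otimes dt$, so the Killing (antisymmetry) condition forces $\mathrm{Hess}(u_{GK}(J))=0$. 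Your single computation of $\mathcal{L}_VJ$ in admissible coordinates is correct — the brackets, $\mathcal{L}_Vdt^j=\sum_l h_{,jl}d\mu^l$, and the two diagonal blocks $\mathrm{Hess}(h)\Psi^{-1}$ and $\Psi^{-1}\mathrm{Hess}(h)$ all check out, and invertibility of $\Psi^{-1}$ plus density of $\mathring{M}$ closes the argument — and it buys both directions at once while avoiding the Levi-Civita connection and the Killing-field bookkeeping entirely; the paper's route, in exchange, reuses standard Riemannian formalism and keeps the coordinate work minimal. Your $(2)\Leftrightarrow(3)$ identity is the same as the paper's Lie derivative of $\omega(\cdot,J\cdot)=g-b$.

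The one genuine omission is in $(1)\Leftrightarrow(2)$, and it is not the one you flagged. Extremality is defined on $GAK_{\omega}^{\mathbb{T}}(M)$ regarded as the space of endomorphisms $K=A+iB$ of $T^{\mathbb{C}}M$: the symplectic form $\Omega$, the infinitesimal action \eqref{Vfondamental}, and the equivalent condition stated in the definition all refer to $K$, so the variational computation yields criticality $\Leftrightarrow\mathcal{L}_VK=0$, not $\mathcal{L}_VJ=0$. When you write $\Omega_J(\mathcal{L}_VJ,\dot J)$ you silently transport everything through the bijection of Proposition \ref{PropEnrietti}; the bridge $\mathcal{L}_VK=0\Leftrightarrow\mathcal{L}_VJ=0$ is exactly the first third of the paper's proof, done via \eqref{J-AB}: $\mathcal{L}_VA=0\Leftrightarrow\mathcal{L}_VJ=(\mathcal{L}_VJ)^{*_\omega}$, and, granting this, $B=\frac{1}{2}(J+J^{*_\omega})A$ gives $\mathcal{L}_VB=0\Leftrightarrow\mathcal{L}_VJ=-(\mathcal{L}_VJ)^{*_\omega}$ (both steps using $\mathcal{L}_V\omega=0$, so that Lie differentiation commutes with the symplectic adjoint). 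You must supply this, either by that computation or by the one-line observation that the correspondence $J\leftrightarrow K$ of Proposition \ref{PropEnrietti} is natural under symplectomorphisms, so the flow of $V$ fixes $K$ iff it fixes $J$; as written, your claim that $(1)\Leftrightarrow(2)$ is "exactly the content of the definition" is not accurate. By contrast, the compact-support issue you single out as the delicate point is real but is not resolved in the paper's proof either: the paper builds the equivalence of (1) with $\mathcal{L}_VK=0$ directly into the definition of extremal (only the remark after Theorem \ref{ThmCompactificationDim4} extends the moment-map identity to the full group $\mathrm{Ham}^{\mathbb{T}}(M,\omega)$, in dimension 4), so on that score your proposal assumes nothing beyond what the paper itself does.
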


\begin{proof}
Let $K=A+iB$ be the endomorphism of $T^{\mathbb{C}}M$ corresponding to $J$ as in section \ref{section3} and let $X$ be a vector field on $M$. The equation $\mathcal{L}_X K=0$ is equivalent to $\mathcal{L}_XA=\mathcal{L}_XB=0$. According to \eqref{J-AB}, we have 
\begin{flalign*}
\mathcal{L}_XA=0 & \Leftrightarrow (J-J^{*_{\omega}})^{-1}(\mathcal{L}_X J-(\mathcal{L}_XJ)^{*_{\omega}})(J-J^{*_{\omega}})^{-1}=0 \\
& \Leftrightarrow\mathcal{L}_X J=(\mathcal{L}_XJ)^{*_{\omega}},
\end{flalign*}
and since $B=\frac{1}{2}(J+J^{*_{\omega}})A$, we see that under the hypothesis $\mathcal{L}_XA=0$, we have
$$
\mathcal{L}_XB=0 \Leftrightarrow \mathcal{L}_X J=-(\mathcal{L}_XJ)^{*_{\omega}}.
$$
Hence, $\mathcal{L}_X K=0$ is equivalent to $\mathcal{L}_X J=0$. Taking the Lie derivative of the equation $\omega(\cdot,J\cdot)=g-b$, we obtain $\mathcal{L}_X\omega(\cdot,J\cdot) + \omega(\cdot,\mathcal{L}_XJ\cdot)=\mathcal{L}_Xg-\mathcal{L}_Xb$. If $X=\mathrm{grad}_{\omega}u_{GK}(J)$, the first term vanishes and we see that 
$$
\mathcal{L}_{\mathrm{grad}_{\omega}u_{GK}(J)}J=0\Leftrightarrow \mathcal{L}_{\mathrm{grad}_{\omega}u_{GK}(J)}g=\mathcal{L}_{\mathrm{grad}_{\omega}u_{GK}(J)}b=0.
$$
This proves that (1) and (2) are equivalent. Statements (2) and (3) are equivalent because $\mathrm{grad}_{\omega}u_{GK}(J)$ preserves $\omega$. Assume (4) holds, so that $u_{GK}(J)=\sum\limits_{j=1}^ma_j\mu^j+b$ for certain numbers $a_1,\ldots,a_m,b\in\mathbb{R}$. Then, $du_{GK}(J)=\sum\limits_{j=1}^ma_jd\mu^j$ and so $\mathrm{grad}_{\omega}u_{GK}(J) = \sum\limits_{j=1}^ma_jK_j$. Since $J$ is $\mathbb{T}$-invariant, we have $\mathcal{L}_{K_j}J=0$ $\forall j$, whence we see that (2) holds. Finally, let us show that (3) implies (4). 
Set 
$$
V:=\mathrm{grad}_{\omega}u_{GK}(J)=\sum\limits_{j=1}^m\frac{\partial u_{GK}(J)}{\partial \mu^j}K_j.
$$
The fact that $V$ is a Killing vector field means that the tensor 
$$
DV^{\flat} = \sum\limits_{j,k=1}^m \frac{\partial^2 u_{GK}(J)}{\partial \mu^j\partial \mu^k}d\mu^k\otimes K_j^{\flat} + \sum\limits_j\frac{\partial u_{GK}(J)}{\partial \mu^j}DK_j^{\flat}
$$
is antisymmetric. Since the vector fields $K_j$ are themselves Killing, this boils down to the first term of the right hand side being antisymmetric. We have $K_j^{\flat}=\sum\limits_{\ell=1}^m(\Psi^{-1})^s_{\;\;j\ell}dt^{\ell}$, so
$$
\sum\limits_{j,k=1}^m \frac{\partial^2 u_{GK}(J)}{\partial \mu^j\partial \mu^k}d\mu^k\otimes K_j^{\flat} = \sum\limits_{k,\ell=1}^m(\mathrm{Hess}(u_{GK}(J))^T(\Psi^{-1})^s)_{k\ell}d\mu^k\otimes dt^{\ell},
$$
which implies $\mathrm{Hess}(u_{GK}(J))=0$.
\end{proof}

\begin{cor}\label{JExtrémaleDim4}
Let $(M,\omega,\mathbb{T},\mu)$ be a compact symplectic toric manifold. If there exists an extremal element $J_0\in K_{\omega}^{\mathbb{T}}(M)$, then there exists extremal elements $J\in DGK_{\omega}^{\mathbb{T}}(M)\backslash K_{\omega}^{\mathbb{T}}(M)$.
\end{cor}

\begin{proof}
According to a result of M. Abreu (\cite{abreu:2} Theorem 4.1), $J_0\in K_{\omega}^{\mathbb{T}}(M)$ is extremal if and only if $s_{J_0}$ is an affine function of $\mu^1,\ldots\mu^m$. Let $J_t$ be the deformation of $J_0$ from Corollary \ref{CorDéformation} associated with an arbitrary nonzero antisymmetric matrix $C$. For $t$ sufficiently small, $J_t\in DGK_{\omega}^{\mathbb{T}}(M)\backslash K_{\omega}^{\mathbb{T}}(M)$, and $u_{GK}(J_t)=s_{J_0}$. We obtain the desired conclusion by combining Abreu's characterization with our Proposition \ref{CaracterisationsExtremal}.
\end{proof}

\begin{rem}
{\rm
In a series of articles (see e.g. \cite{donaldson:2,donaldson:3}), S. K. Donaldson has developped a general program of characterizing compact toric varieties admitting extremal Kähler metrics in terms of suitable notions of stability for the corresponding Delzant polytopes.
}
\end{rem}

\section{The 4-dimensional case}
In this section we focus on the 4-dimensional case. In section \ref{section2}, we formulate some lemmas that will be useful later and we show that, on compact 4-manifolds, the generalized Kähler structures of symplectic type are, up to isomorphism, precisely those whose underlying complex structures induce the same orientation. In section \ref{Compactification en dimension 4} we argue that, in dimension 4, the sufficient conditions of Theorem \ref{ThmC123} are also necessary. We do so by formulating an equivalent set of conditions as is done in the Kähler setting of \cite{apostolov:3}. Finally, we provide a closed formula for the generalized Hermitian scalar curvature of elements in $DGK_{\omega}^{\mathbb{T}}(M)$ in terms of the underlying bi-Hermitian structure.

\subsection{Generalized Kähler structures of symplectic type in dimension 4}\label{section2}
In this subsection, $M$ denotes a smooth manifold of dimension 4. In this case, the underlying complex structures $J_{\pm}$ of a generalized Kähler structure of symplectic type $(J_+,J_-,g,b)$ induce the same orientation \cite{gualtieri:1}. In particular, $(J_+,J_-,g)$ forms a {\em bi-Hermitian} structure in the sense of \cite{apostolov:4} and we have \cite{pontecorvo:1}:

\begin{lemme}\label{LemmePontecorvo}
If $(J_+,J_-,g,b)$ is a generalized Kähler structure with $J_+$ and $J_-$ inducing the same orientation on, then 
\begin{equation}\label{IdentitéCruciale}
J_+J_-+J_-J_+ = -2p\mathrm{Id},
\end{equation}
where $p=-\frac{1}{4}\mathrm{tr}(J+J_-)\in [-1,1]$ is called the {\em angle function}. Moreover, $p=\pm 1$ if and only if $J_+=\pm J_-$.
\end{lemme}

Recall that the {\em Lee form} of an almost Hermitian metric $(g,J)$ with fundamental form $F=g(J\cdot,\cdot)$ is the 1-form $\theta=J\delta F$, also characterized as the unique 1-form such that $dF=\theta\wedge F$. A Hermitian metric is called {\em Gauduchon} \cite{gauduchon:3} if $\delta\theta=0$.

\begin{lemme}[\cite{apostolov:2}]\label{GK-Gauduchon} 
If $(J_+,J_-,g,b)$ is a generalized Kähler structure with $J_+$ and $J_-$ inducing the same orientation, then the metric $g$ is Gauduchon with respect to $J_+$ and $J_-$, and the Lee forms are related by 
$$
\theta_++\theta_-=0, \quad \theta_+=*db.
$$
Here, $*$ is the Hodge operator relative to $g$ and the orientation induced by $J_{\pm}$.
\end{lemme}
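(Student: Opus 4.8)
The plan is to deduce everything from the generalized K\"ahler integrability relation $d^c_\pm F_\pm = \mp\,db$ recorded earlier, exploiting two features special to real dimension $4$: each fundamental form $F_\pm$ is self-dual for the orientation it induces (so $*F_\pm = F_\pm$), and, by Lemma \ref{LemmePontecorvo}, $J_+$ and $J_-$ induce the \emph{same} orientation, so that a single Hodge operator $*$ serves both Hermitian structures $(g,J_+)$ and $(g,J_-)$ at once. Granting the characterization $dF_\pm = \theta_\pm\wedge F_\pm$ of the Lee form, the whole statement then reduces to a short computation: rewrite $d^c_\pm F_\pm$ through $\theta_\pm$, apply $*$ to read off the Lee forms, and differentiate to obtain the Gauduchon condition.

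First I would express $d^c_\pm F_\pm$ in terms of $\theta_\pm$. Since $F_\pm$ is of type $(1,1)$ it is $J_\pm$-invariant, hence $J_\pm^{-1}F_\pm = F_\pm$; using that the induced action of $J_\pm$ on forms distributes over the wedge product together with $dF_\pm = \theta_\pm\wedge F_\pm$ gives $d^c_\pm F_\pm = J_\pm\,dF_\pm = (J_\pm\theta_\pm)\wedge F_\pm$. Combined with integrability this reads $(J_\pm\theta_\pm)\wedge F_\pm = \mp\,db$. Next I would apply $*$ and invoke the pointwise identity $*(\alpha\wedge F_\pm) = J_\pm\alpha$, valid for any $1$-form $\alpha$ in the $J_\pm$-orientation (where $*F_\pm = F_\pm$); together with $J_\pm^2 = -\mathrm{Id}$ on $1$-forms this yields $*\big((J_\pm\theta_\pm)\wedge F_\pm\big) = J_\pm(J_\pm\theta_\pm) = -\theta_\pm$. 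Hence $-\theta_\pm = \mp\,{*db}$, that is $\theta_+ = *db$ and $\theta_- = -*db$, giving simultaneously $\theta_+ = *db$ and $\theta_+ + \theta_- = 0$. It is precisely here that the equal-orientation conclusion of Lemma \ref{LemmePontecorvo} enters, ensuring that the same $*$ and the same self-duality $*F_\pm = F_\pm$ apply on both sides.

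For the Gauduchon assertion I would argue directly from the Lee-form formula. For $\theta_+ = *db$, using $\delta = -*d*$ on $1$-forms in dimension $4$ and $** = -\mathrm{Id}$ on $3$-forms, one computes $\delta\theta_+ = -*d*({*db}) = -*d({**db}) = -*d(-db) = *\,d(db) = 0$ since $d(db) = 0$; as $\theta_- = -\theta_+$ the same holds for $\theta_-$. Thus $\delta\theta_\pm = 0$, i.e. $g$ is Gauduchon with respect to both $J_+$ and $J_-$. Equivalently, differentiating the integrability relation gives $dd^c_\pm F_\pm = \mp\,d(db) = 0$, i.e. $F_\pm$ is pluriclosed, which in dimension $4$---where $F_\pm^{m-1} = F_\pm$---coincides with the Gauduchon condition.

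The conceptual content is light; the main obstacle is sign and convention bookkeeping. I would need to fix consistently the sign of $d^c = J\,d\,J^{-1}$ acting on $p$-forms, the signs of $**$ and of $\delta = \pm\,{*d*}$ in the relevant degrees, and the orientation convention tying $*F_\pm = F_\pm$ and $db$ together. In particular, the pointwise Hodge identity $*(\alpha\wedge F_\pm) = J_\pm\alpha$ (and the self-duality $*F_\pm = F_\pm$) is the step most deserving a careful verification in a $J_\pm$-adapted orthonormal coframe, and arranging every sign to be mutually consistent---so that the stated relations rather than their negatives emerge---is the one genuinely delicate point.
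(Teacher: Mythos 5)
Your proof is correct. There is, however, nothing in the paper to compare it against: the lemma is quoted from Apostolov--Gualtieri \cite{apostolov:2} without proof, and your argument is essentially the standard one from that reference, reconstructed from ingredients the paper does supply. The steps all check out under the paper's conventions: since $J\psi=(-1)^p\psi(J\cdot,\ldots,J\cdot)$, one has $J^2=\mathrm{Id}$ on 2-forms and $J_\pm F_\pm=F_\pm$ (type $(1,1)$), hence $d^c_\pm F_\pm=J_\pm\,dF_\pm=(J_\pm\theta_\pm)\wedge F_\pm$; with $(J\alpha)(X)=-\alpha(JX)$ on 1-forms and $\mathrm{vol}=F_\pm^2/2$, the pointwise identity $*(\alpha\wedge F_\pm)=J_\pm\alpha$ holds with exactly the sign you need (checked in a unitary coframe with $F=e^{12}+e^{34}$, $Je^1=e^2$, $*e^{134}=e^2$), so $*\bigl((J_\pm\theta_\pm)\wedge F_\pm\bigr)=J_\pm^2\theta_\pm=-\theta_\pm$, and the integrability relation $d^c_\pm F_\pm=\mp\,db$ gives $\theta_+=*db$, $\theta_-=-*db$, i.e. both asserted identities at once. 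The Gauduchon step is also sound: in dimension 4, $\delta=-*d*$ in every degree and $**=-\mathrm{Id}$ on 3-forms, so $\delta\theta_+=-*d(-db)=*d(db)=0$, and $\delta\theta_-=0$ follows from $\theta_-=-\theta_+$; your alternative check via $dd^c_\pm F_\pm=\mp\,d(db)=0$ is equally valid, since pluriclosed coincides with Gauduchon when $m=2$. Two minor points deserve correction or emphasis. First, the equal-orientation statement is a \emph{hypothesis} of the present lemma (and the standing assumption of the subsection, where $\dim M=4$); it is likewise a hypothesis, not a conclusion, of Lemma \ref{LemmePontecorvo}, so your phrase ``the equal-orientation conclusion of Lemma \ref{LemmePontecorvo}'' has the logic backwards, though nothing in the proof depends on this. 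Second, both the characterization $dF_\pm=\theta_\pm\wedge F_\pm$ with no primitive correction term and the self-duality $*F_\pm=F_\pm$ are special to real dimension 4, so your argument is genuinely 4-dimensional; that is fine here, since the lemma lives in the 4-dimensional section, but it would not generalize verbatim to higher dimensions.
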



Recall from \cite{gualtieri:1} that the bundle isomorphisms of $TM\oplus T^*M$ that preserve both the natural inner product and the Courant bracket (called {\em Courant isomorphisms}) are of the form $f_*\circ e^b$ for $f\in \mathrm{Diff}(M)$, $b\in \Omega^2(M)$ a closed 2-form, and $e^b:X\oplus\xi\mapsto X\oplus(b(X,\cdot)+\xi)$.

\begin{thm}[\cite{hitchin:2}]
Let $M$ be a compact 4-dimensional. A generalized Kähler structure $(\mathcal{J}_1,\mathcal{J}_2)$ on $M$ is Courant equivalent to a generalized Kähler structure of symplectic type if and only if the complex structures $J_+$ and $J_-$ induce the same orientation.
\end{thm}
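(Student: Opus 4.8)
The overall plan is to pass to the bi-Hermitian data $(J_+,J_-,g,b)$ associated to $(\mathcal{J}_1,\mathcal{J}_2)$ by \eqref{eqGualtieri}, and to exploit two facts. First, a Courant isomorphism $f_\ast\circ e^{B}$ leaves the triple $(J_+,J_-,g)$ unchanged up to the action of the diffeomorphism $f$ and alters $b$ only by the addition of the closed form $B$; in particular $e^{B}$ fixes $J_+$ and $J_-$. Second, the pointwise \emph{type} of a generalized complex structure (the corank of its projection to $TM$) is a Courant invariant. Since $\mathcal{J}_\omega$ has type $0$ everywhere, and since a diffeomorphism either fixes or reverses the orientations of both $J_+$ and $J_-$ at once, both the \emph{relative} orientation of the pair $(J_+,J_-)$ and the property ``one of the $\mathcal{J}_i$ has type $0$ everywhere'' are invariants of the Courant equivalence class.

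For the implication \emph{symplectic type $\Rightarrow$ same orientation}, it suffices, by the invariance of the relative orientation, to verify it on a structure of symplectic type. There $J_+$ is an $\omega$-tamed almost complex structure and $J_-=J_+^{*_\omega}$ by Proposition \ref{PropEnrietti}. Taming gives $\omega(X,J_+X)>0$, so $J_+$ induces the orientation of $\omega$; and from the definition of the symplectic adjoint, $\omega(X,J_-X)=\omega(J_+X,X)=-\omega(X,J_+X)<0$, so $-J_-$ is $\omega$-tamed and likewise induces the orientation of $\omega$. As $-J_-$ and $J_-$ induce the same orientation, $J_+$ and $J_-$ induce the same orientation.

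For the converse I would construct the symplectic form explicitly. Fixing the common orientation, both $F_+$ and $F_-$ are self-dual, hence so are $F_+\pm F_-$; a nonzero self-dual $2$-form on a $4$-manifold is nondegenerate, so $F_+\pm F_-$ is nondegenerate away from its zero locus, which by Lemma \ref{LemmePontecorvo} is $\{p=\mp 1\}$. The first step is to show that at least one of these loci is empty, so that one of $F_+\pm F_-$ is nowhere zero; I would deduce this from the holomorphic Poisson structure of Remark \ref{RemGualtieri}, whose vanishing set is $\{p=\pm1\}$ and along which $p$ is locally constant. Supposing then that $F_++F_-$ is nowhere zero, I set $\Omega:=2\,(F_+^{-1}+F_-^{-1})^{-1}$ and let $B$ be the $2$-form determined by $b+B=(J_+^\ast-J_-^\ast)(F_+^{-1}+F_-^{-1})^{-1}$. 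Comparing with \eqref{eqGualtieri}, one checks pointwise that $e^{B}$ conjugates $\mathcal{J}_2$ into $\mathcal{J}_\Omega$: the off-diagonal blocks fix $\Omega$ and $B$, and the diagonal blocks then match by the algebraic relations tying $J_\pm$, $g$, and $F_\pm$.

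The crux is to prove that $B$ is closed. Once $dB=0$, the map $e^{B}$ is a genuine Courant isomorphism and hence preserves integrability; as it sends the integrable $\mathcal{J}_2$ to $\mathcal{J}_\Omega$, integrability of $\mathcal{J}_\Omega$ forces $d\Omega=0$, so $\Omega$ is symplectic and $(\mathcal{J}_1,\mathcal{J}_2)$ is Courant equivalent to the symplectic-type pair $(e^{B}\mathcal{J}_1e^{-B},\mathcal{J}_\Omega)$. To obtain $dB=0$ I would differentiate the defining expression for $b+B$ and reduce it to $db$ using the integrability identities $d^{c}_\pm F_\pm=\mp\,db$ together with the Gauduchon relations $\theta_++\theta_-=0$ and $\theta_+=\ast\,db$ from Lemma \ref{GK-Gauduchon}, which in dimension $4$ constrain $db$ through the Lee forms of $(J_+,g)$ and $(J_-,g)$. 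I expect this closedness computation, together with the global nonvanishing of one of $F_+\pm F_-$, to be the main obstacle; the remaining ingredients are pointwise linear algebra and the formal fact that integrability transfers across a closed $B$-field transform.
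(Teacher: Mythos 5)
Your skeleton is sound and in fact mirrors the paper's: your $\Omega=2(F_+^{-1}+F_-^{-1})^{-1}=\tfrac{1}{1+p}\,g(J_++J_-)$ is exactly the image, under the swap $J_-\mapsto -J_-$ (which exchanges $\mathcal{J}_1$ and $\mathcal{J}_2$ and changes $p$ to $-p$), of the form $\omega=\tfrac{1}{1-p}\,g(J_+-J_-)=F_+-\tfrac{1}{2(1-p)}g[J_+,J_-]J_+$ that the paper builds in case (I), and your pointwise $B$-field matching is correct linear algebra. Your forward direction is also fine (the paper simply cites \cite{gualtieri:1} for it). But there is a genuine gap at the step you call the ``first step'': your argument that one of the loci $\{p=1\}$, $\{p=-1\}$ is empty does not work. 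Holomorphicity of the Poisson tensor makes its zero set an analytic curve, and continuity forces $p$ to be constantly $+1$ or $-1$ on each \emph{connected component} of that set --- but this is vacuous globally: nothing in a local argument prevents distinct components from carrying opposite signs, so both loci could a priori be nonempty. The alternative ``$J_+(x)\neq J_-(x)$ for all $x$, or $J_+(x)\neq -J_-(x)$ for all $x$'' is a genuinely global theorem about compact bihermitian surfaces, which the paper imports from \cite{apostolov:4} after first noting that $b_1(M)$ is even by \cite{apostolov:2}. It is the essential input: if both loci were nonempty, neither $\mathcal{J}_1$ nor $\mathcal{J}_2$ would have type $0$ everywhere, and (by the type invariance you yourself invoke) no Courant transform could make the pair symplectic. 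You must either cite this dichotomy or reprove it; local constancy of $p$ cannot replace it.

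The second incomplete step is $dB=0$, which you leave as an expected computation; it is true, but the paper's route shows how to avoid the computation entirely, in two complementary ways. First, the paper proves closedness of the two-form itself rather than of the $B$-shift: by the computation in the proof of Proposition 4 of \cite{apostolov:4}, $\delta\omega=-\tfrac{1}{2}\omega(\theta_++\theta_-)^{\sharp}$, which vanishes by Lemma \ref{GK-Gauduchon}; since $\omega$ is self-dual and $d=-*\delta*$ in dimension $4$, $\omega$ is closed, hence symplectic. Second, the closedness of the $B$-shift then comes for free: once $g=-\tfrac{1}{2}\omega(J_+-J_-)$ is checked, Proposition \ref{PropEnrietti} says that $(J_+,J_-,g,b_\omega)$ with $b_\omega=-\tfrac{1}{2}\omega(J_++J_-)$ is itself a generalized K\"ahler structure (integrability of $J_\pm$ suffices), so the integrability relation $d^c_\pm F_\pm=\mp db$ holds for both $b$ and $b_\omega$, giving $d(b_\omega-b)=0$ without any tensor calculus. (Alternatively and most softly: an integrable generalized complex structure of type $0$ everywhere has canonical bundle generated by $e^{B+i\omega}$ with $d(B+i\omega)=0$, so closedness of both $B$ and $\omega$ is automatic once the dichotomy gives type $0$ everywhere.) Either patch closes your second gap; the first gap, the global alternative, is the one that cannot be closed by the argument you propose.
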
 

\begin{proof}
If $(\mathcal{J}_1,\mathcal{J}_2)$ is Courant equivalent to a generalized Kähler structure of symplectic type, then $J_+$ and $J_-$ induce the same orientation \cite{gualtieri:1}. For the converse, assume $J_+$ and $J_-$ induce the same orientation. Without loss of generality, we may assume that $J_+\neq\pm J_-$. In this case, the first Betti number of $M$ is even \cite{apostolov:2} and we face the following alternative \cite{apostolov:4}:
\begin{itemize}
\item[(I)] $J_+(x)\neq J_-(x)$ $\forall x\in M$,
\item[(II)] $J_+(x)\neq -J_-(x)$ $\forall x\in M$.
\end{itemize}
Assume (I) holds, \i.e. $p(x)<1$ $\forall x\in M$, where $p$ is the angle function introduced in Lemma \ref{LemmePontecorvo}. Consider the 2-form 
$$
\omega=F_+-\frac{1}{2(1-p)}g[J_+,J_-]J_+,
$$
where $F_+=gJ_+$ is the fundamental form of the Hermitian structure $(g,J_+)$. This form is globally defined on $M$ and its codifferential was computed in the proof of Proposition 4 of \cite{apostolov:4} to be $\delta\omega=-\frac{1}{2}\omega(\theta_++\theta_-)^\sharp$. However, Lemma \ref{GK-Gauduchon} implies that $\omega$ is co-closed. Since $d=-*\delta*$ in dimension 4 and $\omega$ is self-dual, we see that $\omega$ is closed. The symmetric part of $\omega(\cdot,J_+\cdot)$ being $g$, it follows that $\omega$ is symplectic. To conclude, it suffices to check that $g=-\tfrac{1}{2}\omega(J_+-J_-)$. Indeed, it will then follow from Proposition \ref{PropEnrietti} that for $b_\omega=-\tfrac{1}{2}\omega(J_++J_-)$, the generalized Kähler structure corresponding to $(J_+,J_-,g,b_\omega)$ is of the form $(\mathcal{J}_{\omega}, \mathcal{J})$. In particular, $e^{b_\omega-b}\cdot(\mathcal{J}_1,\mathcal{J}_2)=(\mathcal{J}_{\omega},\mathcal{J})$. We have
$$
\omega(J_+-J_-)=-g-gJ_+J_--\frac{1}{2(1-p)}g[J_+,J_-]J_+(J_+-J_-).
$$
Using that $g([J_+,J_-]\cdot,\cdot)$ is $J_+$-anti-invariant along with the identity $(J_+-J_-)^2 = -2(1-p)\mathrm{Id}$, we may write
\begin{flalign*}
g([J_+,J_-]J_+(J_+-J_-)\cdot,\cdot)&=2(1-p)g(J_++J_-)\cdot,J_+\cdot)\\
&=2(1-p)(g-g(J_+J_-\cdot,\cdot)).
\end{flalign*} 
We see then that $\omega(J_+-J_-)=-2g$.

If (II) holds, consider $J_-'=-J_-$ so that $(J_+,J'_-)$ satisfies (I) and so $(\mathcal{J}_1,\mathcal{J}_2)$ is Courant equivalent to $(\mathcal{J},\mathcal{J}_{\omega})$.
\end{proof}

\subsection{Compactification in dimension 4}\label{Compactification en dimension 4}

Unless stated otherwise, we assume in this section that $(M,\omega, \mathbb{T})$ is a compact symplectic toric manifold of real dimension $4$ with moment map $\mu:M\rightarrow \Delta\subset \mathfrak{t}^*$ and consider $\mathring{J}\in DGK^{\mathbb{T}}_\omega(\mathring{M})$ of the form \eqref{FormeBlocAntidiagonale} with respect to admissible coordinates $(\mu^j,t^j)$ on $\mathring{M}$. We begin by writing down some identities valid in dimension 4. According to Theorem \ref{PropPrincipale}, the decomposition $\mathring{\Psi}=\mathring{\Psi}^s+\mathring{\Psi}^a$ of $\mathring{\Psi}$ into its symmetric and antisymmetric parts is of the form $\mathring{\Psi}^s=S$, $\mathring{\Psi}^a=C$ for some positive definite symmetric matrix $S$ and a constant antisymmetric matrix $C=\bigl(\begin{smallmatrix}
0&c\\ -c&0
\end{smallmatrix} \bigr)$. Therefor, we have the decomposition $\mathring{\Psi}^{-1}=(\mathring{\Psi}^{-1})^s+(\mathring{\Psi}^{-1})^a$ with
\begin{equation}\label{partiesSymAsym}
(\mathring{\Psi}^{-1})^s = \frac{\det S}{\det \mathring{\Psi}}S^{-1}, \quad (\mathring{\Psi}^{-1})^a=-\frac{1}{\det \mathring{\Psi}}C.
\end{equation}
Also, the Riemannian metric $\mathring{g}=\omega(\cdot,\mathring{\Psi}\cdot)^s$ and the 2-form $\mathring{b}=-\omega(\cdot,\mathring{\Psi}\cdot)^a$ are given by
\begin{equation}\label{gDiagoDim4}
\mathring{g}=\sum\limits_{i,j=1}^2 S_{ij}d\mu^i\otimes d \mu^j+\frac{\det S}{\det \mathring{\Psi}}S^{ij} dt^i\otimes dt^j,
\end{equation}
\begin{equation}\label{bdim4}
\mathring{b}=-cd\mu^1\wedge d\mu^2+\frac{c}{\det\mathring{\Psi}}dt^1\wedge dt^2.
\end{equation}
The angle function $\mathring{p}=-\frac{1}{4}\mathrm{tr}(\mathring{J}\mathring{J}^{*_\omega})$ from Lemma \ref{LemmePontecorvo} is given by
\begin{equation}
\mathring{p}=\frac{c^2-\det S}{\det\mathring{\Psi}}.
\end{equation}
In particular, 
\begin{equation}\label{Identitésp}
\frac{1-\mathring{p}}{2}=\frac{\det S}{\det\mathring{\Psi}}, \quad \frac{1+\mathring{p}}{2}=\frac{c^2}{\det\mathring{\Psi}}.
\end{equation}
Finally, the determinants are related by the formula
\begin{equation}\label{RelationDet}
\det{\mathring{\Psi}} = \det S + c^2.
\end{equation}

\begin{thm} \label{PropCompactificationDim4}
Consider $J \in DGK^{\mathbb{T}}_\omega(M)$ of the form \eqref{FormeJ} and $\mathring{J}\in DGK^{\mathbb{T}}_\omega(\mathring{M})$ of the form \eqref{FormeJTilde} with respect to admissible coordinates $(\mu^j,t^j)$. If $\mathring{J}$ satisfies conditions (C1) and (C2) of Theorem \ref{ThmC123} relative to $J$, then $\mathring{J}$ is the restriction of an element of $DGK^{\mathbb{T}}_\omega(M)$.
\end{thm}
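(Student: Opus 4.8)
The plan is to deduce the statement from Theorem~\ref{ThmC123}: since (C1) and (C2) are assumed, it suffices to prove that in real dimension~$4$ these two conditions force condition (C3) as well. The integrability of the resulting global structure, hence its membership in $DGK^{\mathbb T}_\omega(M)$ rather than merely in $DGAK^{\mathbb T}_\omega(M)$, is then automatic from Theorem~\ref{ThmC123}, being inherited from $\mathring J$ by density of $\mathring M$. First I would rewrite (C3) in metric terms: by \eqref{MembreDeDroite} the bilinear form appearing in (C3) is exactly the extension of $\mathring\beta=\omega(\cdot,\mathring J\cdot)$, and since an antisymmetric summand is invisible to definiteness, (C3) is equivalent to the positive-definiteness on $M\setminus\mathring M$ of its symmetric part, the candidate metric $\mathring g=\omega(\cdot,\mathring\Psi\cdot)^s$ of \eqref{gDiagoDim4}.

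Conditions (C1) and (C2) already guarantee (as in the discussion preceding Theorem~\ref{ThmC123}) that $\mathring g$ extends to a smooth symmetric tensor on $M$; being positive-definite on the dense open set $\mathring M$, it is positive-semidefinite everywhere, so the whole problem is to rule out a nontrivial kernel along the boundary $\mu^{-1}(\partial\Delta)$. To do this I would compare $\mathring g$ with the toric \emph{K\"ahler} metric $g_\tau$ having the \emph{same} symplectic potential $\tau$, i.e.\ the metric of \eqref{gDiagoDim4} with $c=0$, whose horizontal block is $S=\mathrm{Hess}(\tau)$ and whose vertical block is $S^{-1}$. The strategy is to show (i) that $g_\tau$ is a genuine smooth positive-definite metric on all of $M$, and (ii) that $\mathring g$ coincides with $g_\tau$ on $M\setminus\mathring M$; positivity of $\mathring g$ on the boundary then follows from that of $g_\tau$.

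For (i) I would verify that $S$ satisfies the classical boundary (compactification) conditions of \cite{apostolov:3}, recast via Lemma~\ref{ConditionsPrime} as: $S-S_0$ extends smoothly to $\Delta$ and $S_0^{-1}S$ extends smoothly and invertibly, where $S_0=\Psi^s=\mathrm{Hess}(\tau_0)$ is the Hessian of the symplectic potential of the reference $J$. The first condition is the symmetric part of (C1). For the second, $S_0$ is itself K\"ahler-admissible (smoothness of the reference metric $g=\omega(\cdot,J\cdot)^s$ on $M$ pins down precisely the boundary jets of $\tau_0$), so $S_0^{-1}$ is smooth up to $\Delta$ and $S_0^{-1}S=I+S_0^{-1}(S-S_0)$ is smooth; its invertibility on $\partial\Delta$ is the delicate point, and it is exactly here that (C2) and dimension~$4$ enter. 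Indeed, by Lemma~\ref{ConditionsPrime} the matrix $\Psi^{-1}\mathring\Psi$ is smooth and invertible on $\Delta$, so its determinant $\det\mathring\Psi/\det\Psi=(\det S+c^2)/(\det S_0+c_0^2)$ — computed from \eqref{RelationDet} applied to both $\mathring\Psi$ and $\Psi$ — is nowhere vanishing; since $\det S,\det S_0\to+\infty$ on $\partial\Delta$ while $c,c_0$ are constant, this boundary limit equals that of $\det S/\det S_0$, and its being nonzero forces the facet-tangential entry of $\mathrm{Hess}(\tau)$ to stay strictly positive on $\partial\Delta$, which is what makes $S_0^{-1}S$ invertible there. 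This is the main obstacle: without it one recovers only positive-semidefiniteness, and the argument is special to dimension~$4$ because it rests on the clean determinant identity \eqref{RelationDet} that decouples the antisymmetric datum $C$.

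Finally, for (ii) I would use \eqref{partiesSymAsym} and \eqref{gDiagoDim4}: $\mathring g$ and $g_\tau$ share the identical horizontal block $S$, while their vertical blocks differ only by the scalar factor $\det S/\det\mathring\Psi=\det S/(\det S+c^2)$, which tends to $1$ on $\partial\Delta$ since $\det S\to+\infty$. Consequently the difference tensor $\mathring g-g_\tau$ extends smoothly to $M$ and, carrying a factor $c^2/\det\mathring\Psi$ that vanishes on $\partial\Delta$, restricts to zero on $M\setminus\mathring M$; hence $\mathring g=g_\tau$ there and is therefore positive-definite. This establishes (C3), and Theorem~\ref{ThmC123} then yields that $\mathring J$ is the restriction of an element of $DGK^{\mathbb T}_\omega(M)$, completing the proof.
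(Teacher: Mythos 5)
Your route is viable and genuinely different from the paper's, which is much shorter: the paper notes that (C1), (C2) already produce (by the discussion preceding Theorem \ref{ThmC123}) a smooth extension $\mathring{J}_c$ to $M$ with $\mathring{J}_c^2=-\mathrm{Id}$, so the extended form $\mathring{\beta}_c=\omega(\cdot,\mathring{J}_c\cdot)$ is everywhere \emph{non-degenerate} and, by continuity, positive semi-definite; it therefore suffices to show the antisymmetric part vanishes on $M\backslash\mathring{M}$ (at a point where $\mathring{b}_c=0$, the form equals its symmetric part, which is non-degenerate and semi-definite, hence definite). This is done in one line by rewriting \eqref{bdim4} as $\mathring{b}=-c\bigl(\mathrm{Id}-\frac{\det\Psi}{\det\mathring{\Psi}}J^*\bigr)d\mu^1\wedge d\mu^2$, observing that the quotient $\det\Psi/\det\mathring{\Psi}$ is global by (C2)' and that $d\mu^1\wedge d\mu^2$ vanishes on $M\backslash\mathring{M}$ because $K_1,K_2$ become linearly dependent there. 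Your alternative --- identifying the boundary limit of the symmetric part $\mathring{g}$ with an auxiliary global K\"ahler metric $g_\tau$ --- is essentially the mechanism of the paper's later Theorem \ref{ThmCompactificationDim4}, but it invokes the boundary-condition machinery of \cite{apostolov:3} (Proposition \ref{Crit�reAGCTF}), which the paper's proof of this theorem avoids entirely; what your approach buys is a direct geometric explanation (the structure becomes K\"ahler along $\mu^{-1}(\partial\Delta)$, cf.\ Proposition \ref{CoroM^o}), at the cost of heavier input.

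Two steps need patching before your argument is complete. First, the parenthetical claim that smoothness of $g=\omega(\cdot,J\cdot)^s$ on $M$ ``pins down the boundary jets of $\tau_0$'', i.e.\ that $S_0=\Psi^s$ is the Hessian datum of a globally defined toric K\"ahler metric, is not an observation but a lemma: $g$ is \emph{not} the K\"ahler metric of $\tau_0$ (by \eqref{gDiagoDim4} its vertical block carries the factor $\frac{\det S_0}{\det\Psi}=\frac{1-p}{2}$), so you must pass through the conformal comparison of Corollary \ref{ThmDGKtoK} and Lemma \ref{Lemmeg_f} --- the $\omega$-compatible metric $g_{AK}=\sqrt{(1-p)/2}\,g$ is of the form $\mathring{g}_f$ with $f\to 1$ on $\partial\Delta$, and conditions (i)--(iii) of Proposition \ref{Crit�reAGCTF} are insensitive to such a factor. (These results are logically independent of the present theorem, so there is no circularity, but they must be cited.) Second, in your step (ii) the vanishing of the scalar $c^2/\det\mathring{\Psi}$ on $\partial\Delta$ does not by itself force $\frac{c^2}{\det\mathring{\Psi}}\sum_{i,j}S^{ij}dt^i\otimes dt^j$ to vanish there, because $dt^i\otimes dt^j$ blows up where the torus action degenerates; you need additionally that the vertical block $V=\sum_{i,j}S^{ij}dt^i\otimes dt^j$ satisfies $0\le V\le g_\tau$ (the horizontal block of $g_\tau$ being non-negative), so that $V$ is bounded with respect to the continuous extension of $g_\tau$ and the product tends to zero --- note that only continuity of the difference, not the smoothness you assert, is needed. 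Finally, your determinant argument for the invertibility of $S_0^{-1}S$ on $\partial\Delta$ is correct once you justify $\det S\to+\infty$ (it follows from (C2)' together with $\det\Psi\to+\infty$, as in the proof of Proposition \ref{CoroM^o}); the remark about a ``facet-tangential entry'' is unnecessary, since invertibility is exactly $\lim\det S/\det S_0\neq 0$.
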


\begin{proof}
By the arguments of section \ref{Compactification et déformation}, $\mathring{J}$ is the restriction of a complex structure $\mathring{J}_c$ on $M$. It remains to show that the (non-degenerate) bilinear form $\mathring{\beta}_c=\omega(\cdot,\mathring{J}_c\cdot)$ is positive definite on $M\backslash \mathring{M}$. By continuity, we know that $\mathring{\beta}_c$ is positive semi-definite there. Consequently, $\mathring{\beta}_c$ will be positive definite provided that the antisymmetric part $-\mathring{b}_c$ of $\mathring{\beta}_c$ vanishes on $M\backslash \mathring{M}$. Using \eqref{reldtdmu}, equation \eqref{bdim4} can alternatively be writen
$$
\mathring{b}=-c\left(\mathrm{Id} - \frac{\det\Psi}{\det \mathring{\Psi}}J^*\right)d\mu^1\wedge d\mu^2.
$$
By continuity, this formula holds true everywhere. Indeed, the 1-forms $d\mu^i$ are globally defined as is the quotient $\frac{\det\Psi}{\det \mathring{\Psi}}$ (cf. condition (C2)' of Lemma \ref{ConditionsPrime}). However, $d\mu^1\wedge d\mu^2$ vanishes on $M\backslash \mathring{M}$ since $d\mu^i$ is $\omega$-dual to $K_i$ and the $K_i$'s are linearly dependent on $M\backslash \mathring{M}$.
\end{proof}

\begin{prop}\label{CoroM^o}
Consider $J \in DGK^{\mathbb{T}}_\omega(M)\backslash K_{\omega}^{\mathbb{T}}(M)$. Then 
\begin{flalign*}
M\backslash \mathring{M} & = \{x\in M \ \vert \ \text{$J(x)$ is compatible with $\omega(x)$}\}\\
& = \{x\in M \ \vert \ J(x) = -J^{*_\omega}(x)\},\\
& = \{x\in M \ \vert \ p(x) = -1\},
\end{flalign*}
where $p=-\frac{1}{4}\mathrm{tr}(JJ^{*_\omega})$.
\end{prop}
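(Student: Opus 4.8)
The plan is to prove the chain of equalities by first disposing of the two pointwise equivalences among the conditions on the right, and then establishing the genuinely geometric equality $M\setminus\mathring{M}=\{x : p(x)=-1\}$ by a two-sided inclusion. Recall from Proposition~\ref{PropEnrietti} that the bi-Hermitian data underlying $J$ are $J_+=J$ and $J_-=J^{*_\omega}$. Since every element of $DGK^{\mathbb{T}}_\omega(M)$ is in particular $\omega$-tamed, compatibility of $J(x)$ with $\omega(x)$ amounts to the single extra condition $\omega(J\cdot,J\cdot)=\omega$ at $x$, which is readily seen to be equivalent to $J(x)+J^{*_\omega}(x)=0$; this gives the first equivalence. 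The second equivalence $J(x)=-J^{*_\omega}(x)\iff p(x)=-1$ is the pointwise content of Lemma~\ref{LemmePontecorvo}, which asserts $p=\pm1\iff J_+=\pm J_-$. Thus it remains only to identify the locus $\{p=-1\}$ with $M\setminus\mathring{M}$.

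For the inclusion $\{p=-1\}\subseteq M\setminus\mathring{M}$, I would work on $\mathring{M}$ in admissible coordinates. By Theorem~\ref{PropPrincipale} the matrix of $J$ there is $\Psi=\mathrm{Hess}(\tau)+C$ with $S:=\mathrm{Hess}(\tau)$ positive definite and $C=\bigl(\begin{smallmatrix}0&c\\-c&0\end{smallmatrix}\bigr)$ a constant antisymmetric matrix; since $J\notin K^{\mathbb{T}}_\omega(M)$ the antisymmetric part does not vanish, so $c\neq0$. Using the angle-function formula $p=(c^2-\det S)/\det\Psi$ recorded earlier in this section together with $\det\Psi=\det S+c^2$ from \eqref{RelationDet}, one gets $p=(c^2-\det S)/(c^2+\det S)$, and the equation $p=-1$ forces $c=0$, a contradiction. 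Hence $p>-1$ everywhere on $\mathring{M}$, so $\{p=-1\}\cap\mathring{M}=\emptyset$, i.e. $\{p=-1\}\subseteq M\setminus\mathring{M}$.

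The reverse inclusion $M\setminus\mathring{M}\subseteq\{p=-1\}$ is the crux. I would recast $p=-1$ as the vanishing of the antisymmetric part $b=-\omega(\cdot,J\cdot)^a$, which is legitimate because $b=0$ at $x$ says precisely that $\omega(\cdot,J\cdot)$ is symmetric there, equivalently $J(x)=-J^{*_\omega}(x)$. Starting from the coordinate expression \eqref{bdim4}, namely $b=-c\,d\mu^1\wedge d\mu^2+\tfrac{c}{\det\Psi}\,dt^1\wedge dt^2$, I would use \eqref{reldtdmu} to rewrite the second term through the $J$-transform of $d\mu^1\wedge d\mu^2$, exactly as in the proof of Theorem~\ref{PropCompactificationDim4}, obtaining the globally defined form $b=-c\,(\mathrm{Id}-J^{*})\,d\mu^1\wedge d\mu^2$, where $J^{*}$ denotes the induced action on $2$-forms. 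Since $d\mu^i=-\iota_{K_i}\omega$ and $J$ are defined on all of $M$, this identity persists by continuity. On $M\setminus\mathring{M}=\mu^{-1}(\partial\Delta)$ the infinitesimal generators $K_1,K_2$ are linearly dependent, so $d\mu^1\wedge d\mu^2=0$ and hence $b=0$ there; therefore $p=-1$ on $M\setminus\mathring{M}$, completing the inclusion and the proof.

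The main obstacle is the boundary behaviour in the last step: the angular coordinates $t^i$ degenerate along $\mu^{-1}(\partial\Delta)$, so the expression \eqref{bdim4} cannot be evaluated there directly, and the whole argument hinges on re-expressing the $dt^1\wedge dt^2$ term via the $J$-transform of the globally defined $2$-form $d\mu^1\wedge d\mu^2$, whose vanishing on the boundary drives the conclusion. The remaining manipulations are the dimension-$4$ linear algebra already recorded in \eqref{partiesSymAsym}--\eqref{RelationDet}.
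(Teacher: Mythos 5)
Your proof is correct, and its crux step takes a genuinely different route from the paper's. The pointwise equivalences (tamed $+$ $\omega$-invariance $\Leftrightarrow$ $J=-J^{*_\omega}$, and $J=-J^{*_\omega}\Leftrightarrow p=-1$ via Lemma~\ref{LemmePontecorvo}) and the inclusion $\{p=-1\}\cap\mathring{M}=\emptyset$ agree with the paper, which phrases the latter through \eqref{p-detS}: $\frac{1-p}{2}=\frac{\det S}{\det S+c^2}<1$ on $\mathring{M}$ when $c\neq0$ --- the same computation as your contradiction argument. (Your one-line justification of $c\neq 0$ deserves to be spelled out: if $c=0$ then $\Psi$ is symmetric, so $J=-J^{*_\omega}$ on the dense set $\mathring{M}$, hence on $M$ by continuity, placing the integrable $J$ in $K^{\mathbb{T}}_{\omega}(M)$, contrary to hypothesis.) For the reverse inclusion $M\setminus\mathring{M}\subseteq\{p=-1\}$, the paper stays with the scalar function $p$ and runs a limit argument: $\Psi^{ij}=\beta(K_i,K_j)$ is smooth on all of $M$, $\det(\Psi^{-1})$ vanishes on $\partial\Delta$ because the $K_i$ degenerate there, so \eqref{RelationDet} forces $\det S\to+\infty$ at the boundary, and passing to the limit in \eqref{p-detS} yields $p=-1$ on $M\setminus\mathring{M}$ by continuity of $p$. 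You instead recast $p=-1$ as the vanishing of the globally defined form $b=-\omega(\cdot,J\cdot)^a$ and reuse the tensorial identity $b=-c\left(\mathrm{Id}-J^*\right)d\mu^1\wedge d\mu^2$ from the proof of Theorem~\ref{PropCompactificationDim4}, here with $\mathring{\Psi}=\Psi$ so the determinant ratio is $1$ and no appeal to condition (C2)' is needed; the conclusion then falls out of $d\mu^1\wedge d\mu^2=0$ on $M\setminus\mathring{M}$. Both mechanisms ultimately exploit the same degeneration of $K_1,K_2$ along $\mu^{-1}(\partial\Delta)$, but they buy different things: your version avoids any limit computation and exhibits $M\setminus\mathring{M}$ directly as the zero locus of $b$, which is conceptually cleaner; the paper's determinant argument is self-contained within the proposition and establishes in passing that $\det S$ blows up at $\partial\Delta$, a fact it reuses in the proof of Theorem~\ref{ThmCompactificationDim4}. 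Your continuity step is legitimate exactly as you flag: both sides of the identity for $b$ are smooth on $M$ and agree on the dense set $\mathring{M}$.
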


\begin{proof}
Assume $J$ is of the form \eqref{FormeJ} relative to admissible coordinates. Combining \eqref{Identitésp} and \eqref{RelationDet}, we obtain
\begin{equation}\label{p-detS}
 \frac{1-p}{2}=\frac{\det S}{\det S+c^2},
\end{equation}
from where $p(x)>-1$ $\forall x\in\mathring{M}$. Moreover, since $\beta=\omega(\cdot,J\cdot)$ takes the form
$$
\beta = \sum\limits_{i,j=1}^2 \Psi_{ij} d\mu^i\otimes d \mu^j+\Psi^{ij} dt^i\otimes dt^j,
$$
we may write $\Psi^{ij} = \beta(K_i,K_j)$, where $\beta(K_i,K_j)$ is a smooth function defined on the whole of $M$. It follows that $\Psi^{-1}\in C^{\infty}(\Delta)$. Moreover, $\det(\Psi^{-1})=0$ on $\partial\Delta$ since the vector fields $K_i$, $i=1,2$ are linearly dependent on $M\backslash\mathring{M}$. By \eqref{RelationDet}, this implies that $\det S\rightarrow +\infty$ when $x\rightarrow \partial\Delta$. Taking the limit in \eqref{p-detS}, this implies in turn that $\frac{1-p}{2}=1$ on $M\backslash\mathring{M}$; \i.e. $p(x)=-1$ $\forall x\in M\backslash\mathring{M}$. The equivalence between the various expressions of $M\backslash\mathring{M}$ correspond to the fact that $p(x)=-1$ if and only if $J=-J^{*_\omega}$.
\end{proof}

\begin{cor}\label{ThmDGKtoK}
Consider $J\in DGK_\omega^{\mathbb{T}}(M)$ and $g=\omega(\cdot,J\cdot)^{s}$. Then, the metric
$$
g_{AK}:=\sqrt{\frac{1-p}{2}}g
$$
is smooth, $\mathbb{T}$-invariant and $\omega$-compatible. 
\end{cor}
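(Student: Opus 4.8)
The plan is to reduce the three assertions---smoothness, $\mathbb{T}$-invariance, and $\omega$-compatibility---to a single computation of the square of the endomorphism $\omega^{-1}g$ in admissible coordinates. First I would record that, because $J\in DGK_{\omega}^{\mathbb{T}}(M)$ is globally defined on the compact manifold $M$, the objects $J$, $J^{*_\omega}$ and (by Proposition \ref{PropEnrietti}) $g=-\tfrac12\omega\big((J-J^{*_\omega})\cdot,\cdot\big)$ are smooth and $\mathbb{T}$-invariant, and $g$ is positive definite, on all of $M$. Likewise $p=-\tfrac14\mathrm{tr}(JJ^{*_\omega})$ is a smooth $\mathbb{T}$-invariant function; by Lemma \ref{LemmePontecorvo} it takes values in $[-1,1]$, with $p=1$ only when $J_+=J_-$, i.e.\ $J=J^{*_\omega}$. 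Since the latter would force $J-J^{*_\omega}=0$ and hence $g=0$, which is impossible for a metric, we have $p<1$ at every point, so $\tfrac{1-p}{2}$ is a strictly positive smooth function on the compact $M$. Its square root is therefore smooth and positive, whence $g_{AK}$ is a smooth, $\mathbb{T}$-invariant, positive definite symmetric $2$-tensor. This settles the first two claims at once.

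The heart of the matter is $\omega$-compatibility: it amounts to producing an $\omega$-compatible almost complex structure $J_{AK}$ with $g_{AK}=\omega(\cdot,J_{AK}\cdot)$, equivalently, to showing that the endomorphism $\omega^{-1}g_{AK}$ defined by $g_{AK}(\cdot,\cdot)=\omega\big(\cdot,(\omega^{-1}g_{AK})\cdot\big)$ squares to $-\mathrm{Id}$. I would carry this out on the dense open set $\mathring M$ in admissible coordinates. Writing $\mathring\Psi=S+C$ as in Theorem \ref{PropPrincipale} and using \eqref{gDiagoDim4} together with $\tfrac{1-p}{2}=\det S/\det\mathring\Psi$, the metric $g$ becomes the block matrix $\mathrm{diag}\!\left(S,\ \tfrac{1-p}{2}S^{-1}\right)$ in the frame $(\partial_{\mu},\partial_{t})$, while $\omega=\left(\begin{smallmatrix}0&I\\-I&0\end{smallmatrix}\right)$. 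A direct multiplication then gives $\omega^{-1}g=\left(\begin{smallmatrix}0&-\tfrac{1-p}{2}S^{-1}\\ S&0\end{smallmatrix}\right)$ and hence $(\omega^{-1}g)^{2}=-\tfrac{1-p}{2}\,\mathrm{Id}$. Consequently the positive conformal multiple of $g$ whose associated endomorphism squares exactly to $-\mathrm{Id}$ is precisely $g_{AK}$; the resulting $J_{AK}$ is anti-diagonal with a symmetric positive definite matrix proportional to $S$, so by Theorem \ref{ThmGuillemin} and Proposition \ref{PropPrincipale} it is a genuine $\omega$-compatible almost complex structure on $\mathring M$, with $g_{AK}=\omega(\cdot,J_{AK}\cdot)$.

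It then remains to pass from $\mathring M$ to all of $M$. Since $g_{AK}$ is already known to be smooth on $M$ from the first step, the endomorphism $J_{AK}=\omega^{-1}g_{AK}$ is smooth on $M$, and the pointwise identity $J_{AK}^{2}=-\mathrm{Id}$, valid on the dense set $\mathring M$, extends to $M$ by continuity; positivity of $g_{AK}$ then shows $J_{AK}$ is $\omega$-tamed, hence $\omega$-compatible, everywhere---exactly the density-and-continuity mechanism already used in Theorem \ref{ThmC123}. The step I expect to be the main obstacle, and the one that fixes the precise conformal factor, is the block computation of $(\omega^{-1}g)^{2}$: one must track carefully that the $S$-block and the $\tfrac{1-p}{2}S^{-1}$-block degenerate in opposite ways as one approaches $M\setminus\mathring M$ (where, by the preceding proposition, $p\to-1$ and the two complex structures coalesce), and it is precisely the cancellation between these two blocks that makes the square a genuine scalar multiple of $\mathrm{Id}$ and lets $J_{AK}$ extend smoothly and non-degenerately across the boundary.
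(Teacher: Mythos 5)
Your strategy is exactly the paper's: get smoothness and $\mathbb{T}$-invariance from the nonvanishing of $1-p$, verify $\omega$-compatibility on $\mathring{M}$ by a block computation in admissible coordinates using \eqref{gDiagoDim4} and the identity $\tfrac{1-p}{2}=\det S/\det\Psi$, then extend to $M$ by density and continuity. The first part is correct and is the paper's argument in substance ($p=1$ forces $J=J^{*_\omega}$ by Lemma \ref{LemmePontecorvo} and Proposition \ref{PropEnrietti}, hence $g=0$, contradicting tamedness), and your key computation is also correct: with $g=\mathrm{diag}\bigl(S,\tfrac{1-p}{2}S^{-1}\bigr)$ one indeed gets $(\omega^{-1}g)^2=-\tfrac{1-p}{2}\,\mathrm{Id}$. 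But your pivotal sentence does not follow from this computation --- it contradicts it. For a positive function $\lambda$ one has $\bigl(\omega^{-1}(\lambda g)\bigr)^2=-\lambda^2\tfrac{1-p}{2}\,\mathrm{Id}$, so the conformal multiple of $g$ whose associated endomorphism squares to $-\mathrm{Id}$ is $\lambda=\sqrt{2/(1-p)}$, the \emph{inverse} of the factor in the statement. With $\lambda=\sqrt{(1-p)/2}$ you get $-\bigl(\tfrac{1-p}{2}\bigr)^2\mathrm{Id}$, which equals $-\mathrm{Id}$ only where $p=-1$; since $\tfrac{1+p}{2}=c^2/\det\Psi$ on $\mathring{M}$, for $C\neq 0$ this happens at no point of $\mathring{M}$. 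So the proof as written asserts compatibility of a metric that your own computation shows is not compatible whenever $J\notin K_{\omega}^{\mathbb{T}}(M)$.

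In fairness, the inversion mirrors an inconsistency in the paper itself: the coordinate expression displayed just after the corollary, $\sum_{i,j}\sqrt{\tfrac{1-p}{2}}S_{ij}d\mu^i\otimes d\mu^j+\bigl(\sqrt{\tfrac{1-p}{2}}\bigr)^{-1}S^{ij}dt^i\otimes dt^j$, is $\omega$-compatible (it is $\mathring{g}_f$ with $f=\sqrt{(1-p)/2}$, as used in Lemma \ref{Lemmeg_f}), but it is \emph{not} equal to $\sqrt{\tfrac{1-p}{2}}\,g$, whose $dt^i\otimes dt^j$ coefficient is $\bigl(\tfrac{1-p}{2}\bigr)^{3/2}S^{ij}$; the unique compatible conformal rescaling of $g$ is $\sqrt{2/(1-p)}\,g=\mathring{g}_{\sqrt{2/(1-p)}}$. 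Your method is precisely the right instrument and, carried to the end, would have detected and repaired this exponent typo; as submitted, it papers over the mismatch by assertion, which is a genuine gap. Two smaller points: your closing worry about a delicate ``cancellation between the two blocks'' is a non-issue, since for any block metric $\mathrm{diag}(P,Q)$ the endomorphism is $\bigl(\begin{smallmatrix}0&-Q\\ P&0\end{smallmatrix}\bigr)$ with square $\mathrm{diag}(-QP,-PQ)$, and here $QP=\tfrac{1-p}{2}\mathrm{Id}$ is scalar for trivial reasons; and your density-and-continuity extension to $M$ coincides with the paper's one-line argument and is sound once the factor is corrected (note $\sqrt{2/(1-p)}\rightarrow 1$ as $p\rightarrow -1$ on $M\backslash\mathring{M}$, consistent with the hypotheses of Lemma \ref{Lemmeg_f}).
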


\begin{proof}
Smoothness follows from the fact that $1-p$ vanishes nowhere on $M$. This is so because $p(x)=1$ if and only if $J(x)=J^{*_\omega}(x)$ and this happens at no point of $M$ since $J$ is tamed by $\omega$. Using $\eqref{Identitésp}$, it is trivial to check that $g_{AK}$ is $\omega$-compatible on $\mathring{M}$, and hence on $M$ by continuity.  
\end{proof}

In terms of admissible coordinates $(\mu^j,t^j)$ on $\mathring{M}$, we have
$$
g_{AK}=\sum\limits_{i,j=1}^2 \sqrt{\frac{1-p}{2}}S_{ij}d\mu^i\otimes d \mu^j+\left(\sqrt{\frac{1-p}{2}}\right)^{-1}S^{ij} dt^i\otimes dt^j.
$$
More generally, it is not hard to see that for any positive $f\in C^{\infty}(\mathring{M})$, the metric defined on $\mathring{M}$ by
\begin{equation}\label{Défg_f}
\mathring{g}_f=\sum\limits_{i,j=1}^2 fS_{ij}d\mu^i\otimes d \mu^j+f^{-1}S^{ij} dt^i\otimes dt^j,
\end{equation}
is compatible with $\omega$. In particular, for $f\equiv 1$, the resulting metric is integrable (Theorem \ref{PropPrincipale}). For this reason, we introduce the following notation:

\begin{notation}
{\rm
Denote by $\mathring{g}_K$ the toric Kähler metric on $\mathring{M}$ corresponding to the function $f\equiv 1$. In other words,
\begin{equation}\label{Défg_K}
\mathring{g}_K = \sum\limits_{i,j=1}^2 S_{ij}d\mu^i\otimes d \mu^j+S^{ij} dt^i\otimes dt^j.
\end{equation}
}
\end{notation}

\begin{lemme}\label{Lemmeg_f}
Let $f\in C^{\infty}(M)^{\mathbb{T}}$ be a positive and $\mathbb{T}$-invariant function such that $f|_{M\backslash{\mathring{M}}}\equiv 1$. Then, $\mathring{g}_f$ is the restriction to $\mathring{M}$ of a toric almost Kähler metric defined on $M$ if and only if $\mathring{g}_K$ is the restriction to $\mathring{M}$ of a toric Kähler metric  defined on $M$.
\end{lemme}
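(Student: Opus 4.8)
The plan is to reduce the question to the classical Abreu--Guillemin boundary conditions and then exploit the hypothesis $f|_{M\setminus\mathring{M}}\equiv 1$. Both $\mathring{g}_f$ and $\mathring{g}_K$ are $\omega$-compatible $\mathbb{T}$-invariant metrics of the anti-diagonal form $\sum_{i,j}\Phi_{ij}\,d\mu^i\otimes d\mu^j+\sum_{i,j}\Phi^{ij}\,dt^i\otimes dt^j$, with coefficient matrix $\Phi=fS$ and $\Phi=S$ respectively; here $\mathring{g}_K$ is actually Kähler since $S=\mathrm{Hess}(\tau)$, whereas $\mathring{g}_f$ is only almost Kähler. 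By the theory of \cite{apostolov:3} (the same boundary analysis that underlies Theorem \ref{ThmC123} and Lemma \ref{ConditionsPrime}), such a metric is the restriction of a smooth $\omega$-compatible metric on $M$ if and only if the inverse matrix $H:=\Phi^{-1}$ extends smoothly to the closed polytope $\Delta$ and satisfies Guillemin's boundary conditions along $\partial\Delta$: on each facet $F_k=\{L_k=0\}$ with primitive normal $\nu_k$ one has $H\nu_k=0$, a prescribed value of the normal derivative of $H\nu_k$, and the appropriate positivity of $H$ on each face. The key feature is that these are conditions on the restriction of $H$ to $\partial\Delta$ together with its boundary $1$-jet. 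First I would record the elementary identity $H_f=(fS)^{-1}=f^{-1}S^{-1}=f^{-1}H_K$, where $H_f,H_K$ denote the inverse coefficient matrices of $\mathring{g}_f,\mathring{g}_K$, and observe that $f^{-1}$ is a positive smooth $\mathbb{T}$-invariant function, hence an element of $C^{\infty}(\Delta)$ under the identification $C^{\infty}(M)^{\mathbb{T}}\cong C^{\infty}(\Delta)$, with $f^{-1}\equiv 1$ on $\partial\Delta$.

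Next I would compare the boundary data of $H_f$ and $H_K$. Smoothness on $\Delta$ is symmetric, since $H_f=f^{-1}H_K$ and $H_K=fH_f$ are products of smooth functions, so $H_f$ extends smoothly to $\Delta$ if and only if $H_K$ does. On each facet $F_k$ one has $f\equiv 1$, so $H_f|_{F_k}=H_K|_{F_k}$; hence the zeroth-order condition $H_f\nu_k=0$ holds on $F_k$ exactly when $H_K\nu_k=0$ does, and likewise the restriction of $H_f$ to any lower-dimensional face coincides with that of $H_K$, so the face-positivity conditions are identical. For the first-order condition I would differentiate $H_f\nu_k=f^{-1}(H_K\nu_k)$, obtaining $d(H_f\nu_k)=d(f^{-1})\,(H_K\nu_k)+f^{-1}\,d(H_K\nu_k)$; restricting to $F_k$ and using $f\equiv 1$ together with $H_K\nu_k=0$ on $F_k$, the first term drops out and one finds $d(H_f\nu_k)|_{F_k}=d(H_K\nu_k)|_{F_k}$. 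Thus, granting the (mutually equivalent) zeroth-order conditions, the normal-derivative condition for $H_f$ holds precisely when it holds for $H_K$.

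Assembling these facts, $H_f$ satisfies all of Guillemin's boundary conditions if and only if $H_K$ does, which by the cited compactification result means that $\mathring{g}_f$ extends to a smooth $\omega$-compatible metric on $M$ if and only if $\mathring{g}_K$ does. It remains only to match the adjectives in the statement: the compactification of $\mathring{g}_K$ is automatically integrable, hence Kähler, because $S=\mathrm{Hess}(\tau)$ makes the underlying almost complex structure integrable on the dense open set $\mathring{M}$ by Theorem \ref{PropPrincipale}, and integrability then propagates to all of $M$ by continuity of the Nijenhuis tensor; the compactification of $\mathring{g}_f$ is a toric almost Kähler metric, with no integrability asserted. This gives the claimed equivalence.

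The one genuinely delicate step is the first-order boundary condition, where one must verify that the normal derivative of $f$ does not interfere. This is precisely where the zeroth-order condition $H_K\nu_k|_{F_k}=0$ is essential: it annihilates the term $d(f^{-1})\,(H_K\nu_k)$, so that the boundary $1$-jet of $H$ is left unchanged upon multiplication by $f$. Everything else in the argument is an immediate consequence of the single observation that $f\equiv 1$ on $\partial\Delta$.
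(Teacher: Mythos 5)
Your proof is correct and follows essentially the same route as the paper: both reduce the statement to the boundary criterion of Proposition~\ref{Crit�reAGCTF} (the conditions of \cite{apostolov:3}), observe that the boundary matrices of $\mathring{g}_f$ and $\mathring{g}_K$ differ by multiplication by a positive smooth $\mathbb{T}$-invariant factor equal to $1$ on $\partial\Delta$, and check via the product rule that the first-order normal-derivative condition is unaffected because the zeroth-order vanishing of $H$ on each facet kills the term involving $df$. The only cosmetic difference is a convention (you scale by $f^{-1}$ where the paper scales by $f$, reflecting whether one works with $g(K_i,K_j)$ or its inverse), which is immaterial since $f$ and $f^{-1}$ share all the relevant properties; your extra remark that integrability of the compactified $\mathring{g}_K$ propagates from $\mathring{M}$ by continuity of the Nijenhuis tensor is left implicit in the paper.
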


\begin{rem}\label{Remg_f}
{\rm
\begin{itemize}
\item[(1)] In particular, since $\mathring{g}_{AK} = \mathring{g}_{f}$ for $f=\sqrt{\frac{1-p}{2}}$, it satisfies the hypotheses of Lemma \ref{Lemmeg_f}. 
\item[(2)] It is known since \cite{apostolov:2} that  every 4-manifold admitting a generalized Kähler structure is Kählerian. Our construction associates in a canonical way a Kähler structure (the metric $g_K$) to any element of $DGK_{\omega}^{\mathbb{T}}(M)$.
\end{itemize}
}
\end{rem}

The proof of Lemma \ref{Lemmeg_f} relies on the compactification criterion for toric almost Kähler metrics of Apostolov-Calderbank-Gauduchon-Tønnesen-Friedman \cite{apostolov:3} which we reproduce here in a form adapted to our needs.

\begin{déf}\label{DéfCoordAdaptées}
Let $(\Delta,\Lambda,\nu_1,\ldots,\nu_d)$ be a Delzant polytope (cf. Definition \ref{DéfDelzantPolytope}) and let $x_0$ be a point in the interior of a $k$-dimensional face $F$ of $\Delta$. Choose a vertex $v$ of $F$. By reordering the normals $\nu_1,\ldots, \nu_d$ if necessary, we may assume that $v$ is characterized by the vanishing of $L_{1},\ldots,L_{m}$ and that $F$ is characterized by the vanishing of $L_{1},\ldots,L_{m-k}$. Since $\Delta$ is a Delzant polytope, the mapping 
$$
\mathfrak{t}^*\rightarrow \mathbb{R}^m:x\mapsto (L_1(x),\ldots,L_m(x))
$$
is an affine isomorphism. The coordinates $y=(y^i)$ defined by $y^i=L_i(x)-L_i(x_0)$ for $i=1\ldots,m$ are called {\bf adapted to $F$} (centered on $x_0$).
\end{déf}

\begin{prop}[\cite{apostolov:3}, Proposition 1]\label{CritèreAGCTF}
Let $(M,\omega, \mathbb{T})$ be a compact symplectic toric manifold of real dimension $2m$ with moment map $\mu:M\rightarrow\Delta\subset\mathfrak{t}^*$. A toric almost Kähler structure $\mathring{J}\in AK^{\mathbb{T}}_\omega(\mathring{M})$ defined on $\mathring{M}$ is the restriction of an element of $AK^{\mathbb{T}}_\omega(M)$ if and only if for each $k$-dimensional face $F$ of $\Delta$ with adapted coordinates $(y^i)$, the matrix $H_{ij}$, defined on $\mathring{\Delta}$ as the matrix whose inverse is $H^{ij}(\mu(p))=\mathring{g}_p(X_{\nu_i},X_{\nu_j})$ ($1\leq i,j\leq m$), satisfies the following conditions:
\begin{itemize}
 \item[(i)] $H_{ij}$ admits a smooth extension to $\Delta$;
 \item[(ii)] on each facet $F_i$ containing $F$,
$$
H_{ij}(y)=0 \quad \forall j=1,\ldots,m \quad \text{and} \quad \frac{\partial H_{ii}}{\partial y^i}=2;
$$
 \item[(iii)] the sub-matrix $(H_{ij})_{i,j=m-k+1}^{m}$ is positive definite on $\mathring{F}$ ($k>0$).
\end{itemize}
Alternatively, $\mathring{J}$ is the restriction of an element of $AK^{\mathbb{T}}_\omega(M)$ if and only if conditions (C1), (C2)' of Lemma \ref{ConditionsPrime} are satisfied.
\end{prop}

\begin{proof}[Proof of Lemma \ref{Lemmeg_f}]
If $H_{ij}$ is the matrix corresponding to $\mathring{g}_K$ as in the statement of Proposition \ref{CritèreAGCTF}, then $fH_{ij}$ is the matrix corresponding to $\mathring{g}_f$. It suffices to realize that conditions (i)-(iii) are satisfied by $H_{ij}$ if and only if they are satisfied by $fH_{ij}$. For (i) and (iii), it is trivial, while for (ii), we have 
\begin{flalign*}
\frac{\partial (fH_{ii})}{\partial y^i}(y) & =\frac{\partial f}{\partial y^i}(y)H_{ii}(y)+f(y)\frac{\partial H_{ii}}{\partial y^i}(y) \\
& = \frac{\partial H_{ii}}{\partial y^i}(y),
\end{flalign*}
using (i) and the hypothesis $f(y)=1$ for $y\in \partial\Delta$.
\end{proof}

Finally, we can prove the result announced at the begining of this section. 

\begin{thm}\label{ThmCompactificationDim4}
Consider $\mathring{J}\in DGK^{\mathbb{T}}_\omega(\mathring{M})$ of the form \eqref{FormeBlocAntidiagonale} with respect to admissible coordinates, where $\mathring{\Psi}=S+C$. Consider also the Riemannian metric $\mathring{g}=\omega(\cdot,\mathring{J}\cdot)^s$ and $\mathring{g}_K$ the toric almost Kähler metric on $\mathring{M}$ definned by equation \eqref{Défg_K}. Then, $\mathring{J}$ is the restriction of an element of $DGK^{\mathbb{T}}_\omega(M)$ if and only if $\mathring{g}_K$ is the restriction to $\mathring{M}$ of an $\omega$-compatible toric Kähler metric on $M$. In particular, this condition is equivalent to the following conditions for the matrix $H_{ij}$, defined on $\mathring{\Delta}$ as the matrix whose inverse is $H^{ij}(\mu(p))=\mathring{g}_K|_p(X_{\nu_i},X_{\nu_j})$ ($1\leq i,j\leq 2$):
For each $k$-dimensional face $F$ of $\Delta$ with adapted coordinates $(y^i)$,
\begin{itemize}
 \item[(i)] $H_{ij}$ admits a smooth extension to $\Delta$;
 \item[(ii)] on each facet $F_i$ containing $F$,
$$
H_{ij}(y)=H_{ji}(y)=0, \quad \forall j=1,2 \quad \text{and} \quad \frac{\partial H_{ii}}{\partial y^i}=2;
$$
 \item[(iii)] the sub-matrix $(H_{ij})_{i,j=m-k+1}^{m}$ is positive definite on $\mathring{F}$ ($k>0$).
\end{itemize}
Alternatively, $\mathring{J}$ is the restriction of an element of $DGK^{\mathbb{T}}_\omega(M)$ if and only if conditions (C1), (C2)' of Lemma \ref{ConditionsPrime} are satisfied.
\end{thm}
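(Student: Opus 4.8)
The plan is to route everything through conditions (C1) and (C2)' of Lemma \ref{ConditionsPrime}, and to show that the constant antisymmetric part of $\mathring\Psi$ is irrelevant to compactification in dimension $4$. Write $\mathring\Psi = S + C$ and, for the reference structure $J$ defining the admissible coordinates, $\Psi = S_{\mathrm{ref}} + C_{\mathrm{ref}}$, where $S, S_{\mathrm{ref}}$ are the (positive definite, symmetric) parts and $C, C_{\mathrm{ref}}$ are constant antisymmetric matrices with off-diagonal entries $c, c_0$. I would first note that the reference K\"ahler metric $g_K^{\mathrm{ref}}$ of matrix $S_{\mathrm{ref}}$ is globally defined on $M$: applying Corollary \ref{ThmDGKtoK} to $J$ shows that $g_{AK} = \sqrt{(1-p)/2}\,g$ extends to $M$, and since this is the metric $\mathring g_f$ with $f = \sqrt{(1-p)/2}$ (which equals $1$ on $M\backslash\mathring M$ by Proposition \ref{CoroM^o}), Lemma \ref{Lemmeg_f} forces $g_K^{\mathrm{ref}}$ to extend as well. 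The heart of the proof is then the claim that (C1), (C2)' for $\mathring\Psi$ relative to $\Psi$ are equivalent to (C1), (C2)' for $S$ relative to $S_{\mathrm{ref}}$. Granting this, the statements close up into a cycle: Lemma \ref{ConditionsPrime} together with Theorem \ref{PropCompactificationDim4} gives that (C1), (C2)' for $\mathring\Psi$ imply $\mathring J$ compactifies; if $\mathring J$ compactifies then, exactly as above, Corollary \ref{ThmDGKtoK} and Lemma \ref{Lemmeg_f} make $\mathring g_K$ extend as a K\"ahler metric; and the criterion of \cite{apostolov:3} (Proposition 1, reproduced above) says that $\mathring g_K$ extends if and only if conditions (i)--(iii) on $H$ hold, if and only if (C1), (C2)' hold for $S$ relative to $S_{\mathrm{ref}}$.

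For (C1) the reduction is immediate: $\mathring\Psi - \Psi = (S - S_{\mathrm{ref}}) + (C - C_{\mathrm{ref}})$ and $C - C_{\mathrm{ref}}$ is constant, so $\mathring\Psi - \Psi$ extends smoothly to $\Delta$ precisely when $S - S_{\mathrm{ref}}$ does. For (C2)' I would first dispose of smoothness. Since $\Psi^{-1}$ extends smoothly to $\Delta$ (cf.\ the proof of Proposition \ref{CoroM^o}), the formulas \eqref{partiesSymAsym} show that $S_{\mathrm{ref}}^{-1}$ is smooth as well; writing $\Psi^{-1}\mathring\Psi = \mathrm{Id} + \Psi^{-1}(S - S_{\mathrm{ref}}) + \Psi^{-1}C - \Psi^{-1}C_{\mathrm{ref}}$ and $S_{\mathrm{ref}}^{-1}S = \mathrm{Id} + S_{\mathrm{ref}}^{-1}(S - S_{\mathrm{ref}})$, both right-hand sides extend smoothly to $\Delta$ once (C1) is known. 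Thus (C2)' for either matrix amounts to nonvanishing of its determinant on all of $\Delta$.

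The crux is the determinant comparison. By the dimension-$4$ identity \eqref{RelationDet} one has $\det(\Psi^{-1}\mathring\Psi) = (\det S + c^2)/(\det S_{\mathrm{ref}} + c_0^2)$ and $\det(S_{\mathrm{ref}}^{-1}S) = \det S/\det S_{\mathrm{ref}}$. Both are strictly positive on $\mathring\Delta$, so the only possible zeros lie on $\partial\Delta$; there, the linear dependence of the $K_i$ forces $\det S, \det S_{\mathrm{ref}} \to +\infty$ (as in the proof of Proposition \ref{CoroM^o}), and a short estimate shows that the additive constants $c^2, c_0^2$ wash out, so the two quotients have the same boundary limit. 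Hence one determinant is nowhere zero on $\Delta$ if and only if the other is, which is exactly the equivalence of the two versions of (C2)'. I expect this boundary estimate to be the only genuine obstacle; everything else is bookkeeping with the identities of the dimension-$4$ setup. Combining the three steps closes the cycle and yields all the stated equivalences, including the reformulation through the matrix $H$.
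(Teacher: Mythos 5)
Your proposal is correct, and its outer skeleton is the same cycle the paper runs: compactify via (C1), (C2)' through Lemma \ref{ConditionsPrime} and Theorem \ref{PropCompactificationDim4}; pass from a compactified $\mathring{J}$ to a compactified $\mathring{g}_K$ via Corollary \ref{ThmDGKtoK}, Proposition \ref{CoroM^o} and Lemma \ref{Lemmeg_f}; and use the Apostolov--Calderbank--Gauduchon--T{\o}nnesen-Friedman criterion (Proposition 1 of \cite{apostolov:3}, reproduced in the paper) for the equivalence with conditions (i)--(iii). Where you genuinely diverge is at the crux. The paper deduces (C1), (C2)' for $\mathring{\Psi}$ from (i)--(iii) by reaching \emph{inside} the proof of the criterion of \cite{apostolov:3} for the boundary asymptotics $(\det S(y))^{-1}=2^{m-k}y^1\cdots y^{m-k}\mathring{P}(y)$ and $(\det\Psi(y))^{-1}=2^{m-k}y^1\cdots y^{m-k}P(y)$ in adapted coordinates, then computes $\lim\det\mathring{\Psi}/\det\Psi=P(0)/\mathring{P}(0)>0$ using \eqref{RelationDet}. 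You instead prove the self-contained equivalence of (C1), (C2)' for $\mathring{\Psi}$ relative to $\Psi$ with (C1), (C2)' for $S$ relative to $S_{\mathrm{ref}}$, using only \eqref{RelationDet}, \eqref{partiesSymAsym}, the smoothness of $\Psi^{-1}$ on $\Delta$, and $\det S_{\mathrm{ref}}\rightarrow+\infty$ from Proposition \ref{CoroM^o}. This buys independence from the internal details of \cite{apostolov:3} (only the statement of its Proposition 1 is used), and it makes explicit a point the paper leaves tacit, namely that the reference K\"ahler metric $g_K^{\mathrm{ref}}$ itself extends to $M$, which is what legitimizes $S_{\mathrm{ref}}$ as the reference in the criterion (the paper's proof compares $S$ directly with the full, non-symmetric $\Psi$, which is slightly loose); what the paper's route buys in exchange is the strict positivity of the boundary limit for free, from the sign of $P(0)/\mathring{P}(0)$.

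One slip, repairable: you assert that the linear dependence of the $K_i$ on $M\backslash\mathring{M}$ forces $\det S\rightarrow+\infty$ ``as in the proof of Proposition \ref{CoroM^o}''. That argument rests on $\mathring{\Psi}^{ij}=\mathring{\beta}(K_i,K_j)$ being a globally smooth function, which is available only for structures already defined on all of $M$; for $\mathring{J}$, defined only on $\mathring{M}$, you cannot invoke it, and indeed if (C2)' fails there is no reason for $\det S$ to blow up. Fortunately your estimate does not need it: under (C1) the function $\det(S_{\mathrm{ref}}^{-1}S)$ extends continuously to the compact set $\Delta$, so $\det S/\det S_{\mathrm{ref}}$ is bounded, and then
$\det(\Psi^{-1}\mathring{\Psi})-\det(S_{\mathrm{ref}}^{-1}S)=\frac{c^2-c_0^2\,(\det S/\det S_{\mathrm{ref}})}{\det S_{\mathrm{ref}}+c_0^2}\rightarrow 0$
at $\partial\Delta$ using only $\det S_{\mathrm{ref}}\rightarrow+\infty$. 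Hence the two determinants share boundary values, both are positive on $\mathring{\Delta}$ by positive definiteness of $S$ and $S_{\mathrm{ref}}$, and the equivalence of the two versions of (C2)' stands. With that one sentence patched, your argument closes all the stated equivalences.
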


\begin{proof}
By Proposition \ref{CritèreAGCTF}, conditions (i)-(iii) are equivalent to the compactification of $\mathring{g}_K$. 

If $\mathring{J}$ is the restriction to $\mathring{M}$ of an element of $DGK^{\mathbb{T}}_\omega(M)$, the by Remark \ref{Remg_f} (1), $\mathring{g}_K$ is the restriction to $\mathring{M}$ of a toric Kähler metric on $M$.

Suppose next that conditions (i)-(iii) are met, and let us show that this implies conditions (C1), (C2)' of Lemma \ref{ConditionsPrime}. This will prove that conditions (i)-(iii) are sufficient to compactification and also that conditions (C1), (C2)' are necessary. By Proposition \ref{CritèreAGCTF}, (C1) is satisfied for $\mathring{g}_K$, \i.e. $S-\Psi$ admits a smooth extension to $\Delta$, where $\Psi$ comes from an element of $DGK_\omega^{\mathbb{T}}(M)$. Since the matrix-valued function $C$ is constant, it admits a smooth extension to $\Delta$ and so $\mathring{\Psi}-\Psi$ admits a smooth extension to $\Delta$, \i.e. (C1) is satisfied for $\mathring{J}$. For (C2)', we must show that for any point $x_0\in\partial\Delta$, we have
$$
\lim_{x\mapsto x_0}\frac{\det\mathring{\Psi}}{\det\Psi}\neq 0.
$$
Let $F$ be the face of $\Delta$ which contains $x_0$ in its interior. It is shown in the proof of Proposition \ref{CritèreAGCTF} that with respect to coordinates $y=(y^i)$ adapted to $F$ centered on $x_0$, we have
$$
(\det S(y))^{-1}=2^{m-k}y^1\ldots y^{m-k}\mathring{P}(y),
$$ 
$$
(\det\Psi(y))^{-1}=2^{m-k}y^1\ldots y^{m-k}P(y),
$$
where $k$ is the dimension of $F$ and where $P, \mathring{P}\in C^{\infty}(\Delta)$ are smooth function, positive at $y=0$. It follows that 
$$
\lim_{x\mapsto x_0}\frac{\det\mathring{\Psi}}{\det\Psi}= \lim_{y\mapsto 0}\left(\frac{\det S(y)}{\det\Psi(y)}+\frac{c^2}{\det\Psi(y)}\right)=\frac{P(0)}{\mathring{P}(0)}>0.
$$
\end{proof}

As a corollary, we have the converse of Corollary \ref{JExtrémaleDim4}.

\begin{cor}\label{JExtrémaleDim4ForReal}
Let $(M,\omega,\mathbb{T},\mu)$ be a compact symplectic toric manifold of dimension 4. If there exists an extremal element $J\in DGK_{\omega}^{\mathbb{T}}(M)$, then there exists extremal elements $J_0\in K_{\omega}^{\mathbb{T}}(M)$.
\end{cor}

\begin{proof}
Let $J\in DGK_{\omega}^{\mathbb{T}}(M)$ be extremal and consider the Kähler structure $g_K$ associated with it in the sense of Theorem \ref{ThmCompactificationDim4}. Clearly, the generalized Hermitian scalar curvature $u_{GK}(J)$ of $J$ is equal to the scalar curvature $s_{g_K}$ of $g_K$ so that, by Proposition \ref{CaracterisationsExtremal}, $u_{GK}(J)$ is an affine function in $(\mu^1,\mu^2)$. By \cite{abreu:1}, this property characterizes toric extremal Kähler metric.
\end{proof}

\begin{rem}
{\rm
In dimension 4, we can use Theorem \ref{ThmCompactificationDim4} to enlarge the scope of \eqref{ExprAlt} to the case of the action of the full group $\mathrm{Ham}^{\mathbb{T}}(M,\omega)$. Specifically, if $(M,\omega, \mathbb{T})$ is a compact symplectic toric manifold of real dimension $4$ and $\nu:DGK_{\omega}^{\mathbb{T}}(M)\rightarrow (C_0^{\infty}(M)^{\mathbb{T}})^*$ is the function given by \eqref{ExprAlt}, then for any $J\in DGK_{\omega}^{\mathbb{T}}(M)$ et $f\in C_0^{\infty}(M)^{\mathbb{T}}$, we have
$$
d(\nu^f)_J(\dot{J})=-\Omega_J(V^{\sharp}_{\; J},\dot{J}),
$$
where $V=\mathrm{grad}_{\omega}f$ and $V^{\sharp}$ is the corresponding vector field on $AGK^{\mathbb{T}}_\omega(M)$ given by \eqref{Vfondamental}.
}
\end{rem}

\subsection{An explicit formula for the generalized Hermitian scalar curvature in dimension 4}\label{sectionfinale}

Since in the Kähler situation $J=-J^{*_\omega}$, the right hand side in equation \eqref{ScalGKPot} corresponds to the scalar curvature of the associated Riemannian metric, it is natural to try to relate $u_{GK}(J)$ to the scalar curvatures of the corresponding Hermitian structure $(J,g)$, $(J^{*_\omega},g)$. Henceforth, let $(M,\omega, \mathbb{T})$ a compact symplectic toric manifold of real dimension $4$ with moment map $\mu:M\rightarrow \Delta\subset \mathfrak{t}^*$ and consider $J\in DGK^{\mathbb{T}}_\omega(M)$ of the form \eqref{FormeBlocAntidiagonale} with respect to angular coordinates $t^j$ on $\mathring{M}$.

Recall that given an almost Hermitian structure $(g,J)$ on $M$ with Chern connection $\nabla$, the induced Hermitian connection $\hat{\nabla}$ on the anticanonical bundle $\bigwedge^2(T^{1,0}M)$ has curvature $R^{\hat{\nabla}}=\sqrt{-1}\rho^{\nabla}\otimes\mathrm{Id}$ where the real 2-form $\rho^{\nabla}$ is called the {\em Chern-Ricci form} of the almost Hermitian structure. The {\em Hermitian scalar curvature} of the almost Hermitian structure is
\begin{equation}\label{uAlternative}
u=\frac{4\rho^\nabla\wedge F}{F\wedge F},
\end{equation}
where $F=gJ$. If $J$ is integrable, $\hat{\nabla}$ is the Chern connection on the anticanonical bundle relative to the induced Hermitian metric and its natural holomorphic structure, and the Ricci-Chern form admits the following local expression:
\begin{equation}\label{RicciHermLoc}
\rho^\nabla=-\frac{1}{2}dd^c\log\sqrt{\det(g_{ij})},
\end{equation}
where $g_{ij}$ are the components of $g$ relative to local holomorphic coordinates. Since the metric $g$ coming from a generalized Kähler structure of symplectic type $(J_+,J_-,g,b)$ is Gauduchon (cf. section \ref{section2}), the Hermitian scalar curvatures $u_{\pm}$ of the Hermitian pairs $(g,J_{\pm})$ are related to the Riemannian scalar curvature $s$ of $g$ by the Lee forms \cite{gauduchon:3}:
\begin{equation}\label{Scal-u}
u_{\pm}=s+\frac{1}{2}|\theta_{\pm}|^2.
\end{equation}
In particular, since $|\theta_+| = |\theta_-|$, $u_+= u_-$, a common function which we denote by $u_J$ and refer to as the {\em Hermitian scalar curvature of the generalized Kähler structure}.

The following technical lemmas will be used in the proof of Theorem \ref{ThmPrincipal} below. In proving them, we shall make use of formulas \eqref{partiesSymAsym}-\eqref{RelationDet} as well as the relation
\begin{equation}\label{FormesVolumes}
v_g=\frac{\det S}{\det \Psi}v_\omega
\end{equation}
between the volume forms induced by $g$ and $\omega$ respectively. 

\begin{lemme}\label{IdentitéMagique} The matrix $S$ satisfies the identity
$$
\sum\limits_{i=1}^2(\det S)S^{ij}_{\;\;,i}=-\sum\limits_{i=1}^2(\det S)_{,i}S^{ij}, \quad j=1,2.
$$
\end{lemme}

\begin{proof}
It suffices to differentiate the identity $(\det S)S^{-1}=CSC^{-1}$:
\begin{flalign*}
\sum\limits_{i=1}^2((\det S)S^{ij})_{,i} & =\sum\limits_{i=1}^2(CSC^{-1})_{ij,i} =\sum\limits_{i,\alpha,\beta=1}^2C_{i\alpha}S_{\alpha\beta,i}C^{\beta j} =\sum\limits_{i,\alpha\beta=1}^2C_{i\alpha}S_{\alpha i,\beta}C^{\beta j}.
\end{flalign*}
But $\sum\limits_{i,\alpha=1}^2C_{i\alpha}S_{\alpha i,\beta} = \mathrm{tr}(CS_{,\beta})$ which vanishes since $C$ is antisymmetric and $S_{,\beta}$ is symmetric.
\end{proof}

\begin{lemme}\label{2eLemme}
For the angle function $p=-\frac{1}{4}\mathrm{tr}(JJ^{*_\omega})$ and the Lee form $\theta$ of the Hermitian pair $(\omega,J)$, we have:
$$
\Delta p=\sum\limits_{i,j=1}^2\frac{2c^2}{(\det \Psi)^2}\left((\det S)_{,ij}-\frac{3}{\det \Psi}(\det S)_{,i}(\det S)_{,j}\right)S^{ij},
$$
$$
|\theta|^2=\sum\limits_{i,j=1}^2\frac{c^2(\det S)_{,i}(\det S)_{,j}}{(\det \Psi)^2\det S}S^{ij}.
$$
\end{lemme}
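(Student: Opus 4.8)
The plan is to use that both $p$ and $\theta$ are assembled from $\mathbb{T}$-invariant data, so every function involved depends only on the momenta $\mu=(\mu^1,\mu^2)$ and the whole computation takes place on $\mathring{\Delta}$. Write $D:=\det S$ and $\Pi:=\det\Psi=D+c^2$ (by \eqref{RelationDet}). I will use throughout the explicit metric \eqref{gDiagoDim4}, whose inverse has $g^{\mu^i\mu^j}=S^{ij}$ and $g^{t^it^j}=\tfrac{\Pi}{D}S_{ij}$ with no mixed $\mu$--$t$ components, together with $\sqrt{\det g}=D/\Pi$ coming from \eqref{FormesVolumes}.

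I would dispatch $|\theta|^2$ first. By Lemma \ref{GK-Gauduchon} the Lee form satisfies $\theta=*db$, and since the Hodge star is a pointwise isometry, $|\theta|^2=|db|^2$; this spares me from ever computing $*db$. Differentiating \eqref{bdim4} — the $d\mu^1\wedge d\mu^2$ term is closed because $c$ is constant — yields $db=-\tfrac{c}{\Pi^2}\,dD\wedge dt^1\wedge dt^2$, where $dD=\sum_i D_{,i}\,d\mu^i$. The squared norm of this decomposable $3$-form is the Gram determinant of $\{dD,dt^1,dt^2\}$ in the inverse metric; because $dD$ lies in the span of the $d\mu^i$ it is orthogonal to $dt^1,dt^2$, so the Gram matrix is block diagonal and its determinant factors as $\langle dD,dD\rangle\cdot(\Pi/D)^2\det S$. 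Since $\langle dD,dD\rangle=\sum_{i,j}D_{,i}D_{,j}S^{ij}$ and $\det S=D$, this collapses directly to the asserted formula for $|\theta|^2$.

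For $\Delta p$ I would start from $p=\tfrac{c^2-D}{c^2+D}$ (equivalent to the angle-function identities recorded before \eqref{RelationDet}), which differentiates cleanly to $p_{,i}=-\tfrac{2c^2}{\Pi^2}D_{,i}$. As $p$ is a function of $\mu$ alone, the positive Laplacian $\Delta=\delta d$ (the convention already implicit in the Gauduchon condition $\delta\theta=0$) reduces to $\Delta p=-\tfrac{\Pi}{D}\sum_{i,j}\partial_i(\tfrac{D}{\Pi}S^{ij}p_{,j})$, only the $\mu$-derivatives surviving. Substituting $p_{,j}$ and expanding with the product rule leaves three families of terms: a genuine Hessian term in $D_{,ij}$, a term from differentiating $D/\Pi^3$, and a term carrying $S^{ij}_{,i}$. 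The last is where the Magic Identity of the preceding lemma, $\sum_i(\det S)S^{ij}_{,i}=-\sum_i(\det S)_{,i}S^{ij}$, is essential: it rewrites $\sum_i S^{ij}_{,i}$ as $-\tfrac1D\sum_i D_{,i}S^{ij}$, converting that term into one more contribution of the shape $D_{,i}D_{,j}S^{ij}$. Collecting the two resulting first-order terms, the coefficient cancellation $\Pi-(\Pi-3D)=3D$ assembles them into $-\tfrac{6c^2}{\Pi^3}\sum_{i,j}S^{ij}D_{,i}D_{,j}$, and together with the Hessian term $\tfrac{2c^2}{\Pi^2}\sum_{i,j}S^{ij}D_{,ij}$ this is exactly the claimed expression.

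The elementary differentiations are harmless; the one real obstacle is the middle step of the $\Delta p$ computation, where the term in $S^{ij}_{,i}$ (i.e.\ derivatives of $S^{-1}$) must be disposed of. Without the Magic Identity these do not visibly combine with the rest, and it is precisely that identity which forces every such term to regroup into the single $D_{,i}D_{,j}S^{ij}$ contribution. A secondary point requiring care is the sign convention for $\Delta$: only the positive (Hodge) Laplacian produces the signs in the statement.
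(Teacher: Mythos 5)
Your proof is correct, and half of it takes a genuinely different route from the paper's. For $\Delta p$ the two arguments are the same computation in different bookkeeping: the paper expands $\Delta p=-*d(*dp)$ using an explicit formula for $*d\mu^i$, while you use the equivalent divergence form $\Delta p=-\tfrac{1}{\sqrt{\det g}}\sum_{i,j}\partial_i\bigl(\sqrt{\det g}\,S^{ij}p_{,j}\bigr)$ with $\sqrt{\det g}=\det S/\det\Psi$ coming from \eqref{FormesVolumes}; both hinge on exactly the two inputs you identify, namely $p_{,i}=-2c^2(\det\Psi)^{-2}(\det S)_{,i}$ and the identity $\sum_i(\det S)S^{ij}_{\;\;,i}=-\sum_i(\det S)_{,i}S^{ij}$ of the lemma immediately preceding this one, which is indeed the only non-mechanical step, and your coefficient arithmetic ($\tfrac{2c^2}{\Pi^2}$ on the Hessian term, $-\tfrac{6c^2}{\Pi^3}$ on the gradient term, in your notation $\Pi=\det\Psi$) checks out. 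For $|\theta|^2$, however, the paper argues differently: it invokes Lemma 7 of \cite{apostolov:4}, $dp=\frac{1}{2}[J,J^{*_\omega}]^*\theta$, together with $[J,J^{*_\omega}]^2=4(p^2-1)\mathrm{Id}$, to solve for $\theta$ as in \eqref{Theta-dp} and reduce everything to $|\theta|^2=(1-p^2)^{-1}|dp|^2$, whereas you use $\theta=*db$ from Lemma \ref{GK-Gauduchon}, the pointwise isometry property of $*$, and a Gram-determinant evaluation of $|db|^2$ with $db=-c(\det\Psi)^{-2}\,d(\det S)\wedge dt^1\wedge dt^2$ read off from \eqref{bdim4}; your block-diagonality claim is right, since \eqref{gDiagoDim4} has no mixed $\mu$--$t$ terms, and $\det\bigl(\tfrac{\det\Psi}{\det S}S_{ij}\bigr)=(\det\Psi)^2/\det S$ gives precisely your factor, so the two answers agree. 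Your version buys self-containedness: it stays entirely within formulas already established in the paper and never needs the bracket identity or the external citation. The paper's version costs that citation but produces as by-products the identities \eqref{Theta-dp} and \eqref{NormesTheta-dp}, which are reused in the proofs of Theorem \ref{ThmPrincipal} and Corollary \ref{CoroS_GK}; if your argument were adopted here, those identities would still have to be derived separately later.
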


\begin{proof}
Using identity \eqref{RelationDet}, we compute
$$
dp=\frac{-2c^2}{(\det\Psi)^2}\sum\limits_{i=1}^2(\det S)_{,i}d\mu^i.
$$
We have
\begin{flalign*}
*d\mu^i = & S^{1i}\frac{\det S}{\det\Psi}dt^1\wedge d\mu^2\wedge dt^2 +S^{2i}\frac{\det S}{\det\Psi}d\mu^1\wedge dt^1\wedge dt^2 \\
&+ S_{1i}d\mu^1\wedge d\mu^2\wedge dt^2 + S_{2i}d\mu^1\wedge dt^1\wedge d\mu^2.
\end{flalign*}
Thus, using the formula from Lemma \ref{IdentitéMagique},
\begin{flalign*}
\Delta p &=-*d\left(\frac{\det S}{\det\Psi}\sum\limits_{i,j=1}^2p_{,i}(S^{1i}dt^1\wedge d\mu^2\wedge dt^2 + S^{2i}d\mu^1\wedge dt^1\wedge dt^2)\right) \\
&=*\frac{2c^2}{(\det \Psi)^2} \sum\limits_{i,j=1}^2\left((\det S)_{,ij} - \frac{3}{\det \Psi}(\det S)_{,i}(\det S)_{,j}S^{ij}\right)\frac{\det S}{\det \Psi}v_\omega.
\end{flalign*}
The desired formula then follows from \eqref{FormesVolumes}. To compute $|\theta|^2$, we use the result from \cite{apostolov:4} (Lemma 7) according to which $dp=\frac{1}{2}[J,J^{*_\omega}]^*\theta$.
Leaning on \eqref{IdentitéCruciale}, we easily show that $[J,J^{*_\omega}]^2=4(p^2-1)\mathrm{Id}$, which allows us to solve for $\theta$ in the preceding formula:
\begin{equation}\label{Theta-dp}
\theta=\frac{1}{2(p^2-1)}[J,J^{*_\omega}]^*dp.
\end{equation}
Using the fact that $[J,J^{*_\omega}]$ is $g$-antisymmetric, we compute
\begin{flalign*}
|\theta|^2&=\frac{(\det \Psi)^2}{4c^2\det S}|dp|^2 =\sum\limits_{i,j=1}^2\frac{c^2}{(\det \Psi)^2\det S}(\det S)_{,i}(\det S)_{,j}S^{ij}.
\end{flalign*}
\end{proof}

\begin{thm}\label{ThmPrincipal}
Let $(M,\omega, \mathbb{T})$ be a compact symplectic toric manifold of real dimension $4$ with moment map $\mu:M\rightarrow \Delta\subset \mathfrak{t}^*$. Consider $J\in DGK_{\omega}^{\mathbb{T}}(M)$ of the form \eqref{FormeBlocAntidiagonale} with respect to angular coordinates $t^j$ on $\mathring{M}$ (cf. Proposition \ref{LemmeCoordonnéesCX}) and let $g$ be the symmetric part of $\omega(\cdot,J\cdot)$. The Hermitian Ricci form of the Hermitian structure $(g, J)$ is given on $\mathring{M}$ by
\begin{equation*}
\rho^{\nabla}=-\frac{1}{2}dd^c\log \det S + dd^c\log \det \Psi,
\end{equation*}
where $\Psi=S+C$ is the decomposition of $\Psi$ into its symmetric and antisymmetric part. The Hermitian scalar curvature of the generalized Kähler structure $J$ is
\begin{equation}\label{FormulePrincipale}
u_J = -\sum\limits_{i,j=1}^2S^{ij}_{\;\;,ij}+\frac{4-2p}{1-p}|\theta|^2-\frac{2\langle[J,J^{*_\omega}],d\theta\rangle}{1-p},
\end{equation}
where $p$ is the angle function (cf. Lemma \ref{LemmePontecorvo}) and where $[J,J^{*_\omega}]$ is seen as a 2-form by means of the metric $g$.
\end{thm}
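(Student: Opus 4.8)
The plan is to treat the two assertions separately, obtaining the Chern-Ricci form first and then feeding it into \eqref{uAlternative} to extract the scalar curvature.

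For the Chern-Ricci form I would pass to the holomorphic coordinates $z^j = u^j + it^j$ supplied by Proposition \ref{LemmeCoordonn�esCX}, in which $\partial/\partial u^j = -JK_j$ and $\partial/\partial t^j = K_j$. Using $du^i = \sum_j \Psi_{ij}\,d\mu^j$ together with the metric \eqref{gDiagoDim4}, one rewrites $g$ in the coordinates $(u^j,t^j)$. The point is that the dimension-4 identity $(\det S)S^{-1} = CSC^{-1}$ (equivalently $(\Psi^{-1})^{T}S\Psi^{-1} = \tfrac{\det S}{\det\Psi}S^{-1}$, which I would record as a preliminary computation) forces $g$ into the Hermitian normal form $g = \sum_{k,l}G_{kl}(du^k\otimes du^l + dt^k\otimes dt^l)$ with $G = \tfrac{\det S}{\det\Psi}S^{-1}$; in particular the holomorphic components satisfy $g_{k\bar l} = \tfrac12 G_{kl}$, so that $\det(g_{k\bar l})$ is a constant multiple of $\det S/(\det\Psi)^2$. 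Since constants die under $dd^c$, substituting $\log\det S - 2\log\det\Psi$ into \eqref{RicciHermLoc} yields exactly $\rho^\nabla = -\tfrac12 dd^c\log\det S + dd^c\log\det\Psi$.

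For the scalar curvature I would use $u_J = u_+ = \tfrac{4\rho^\nabla\wedge F}{F\wedge F}$, with $F = g(J\cdot,\cdot)$ the fundamental form of $(g,J)$. Working in the admissible coordinates $(\mu^j,t^j)$, a direct computation gives $F = \sum_{k,l}M_{kl}\,d\mu^k\wedge dt^l$ with $M = \tfrac{\det S}{\det\Psi}(I - CS^{-1})$, while writing $\rho^\nabla = dd^c\Phi$ for $\Phi = -\tfrac12\log\det S + \log\det\Psi$ and using $Jd\mu^i = \sum_k \Psi^{ik}dt^k$ gives $dd^c\Phi = \sum_{l,k}A_{lk}\,d\mu^l\wedge dt^k$ with $A_{lk} = \sum_i(\Phi_{,il}\Psi^{ik} + \Phi_{,i}\Psi^{ik}{}_{,l})$. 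Because both two-forms are of pure type $d\mu\wedge dt$, their wedge collapses, in dimension $2m=4$, to $\rho^\nabla\wedge F = \mathrm{tr}(A\,\mathrm{adj}(M))\,v_\omega$; combining this with $F\wedge F = 2v_g = 2\tfrac{\det S}{\det\Psi}v_\omega$ and the dimension-2 identity $\mathrm{adj}(M) = \tfrac{\det S}{\det\Psi}(I + CS^{-1})$ (here $\mathrm{tr}(CS^{-1})=0$) reduces the whole expression to the compact form $u_J = 2\,\mathrm{tr}\!\big(A(I + CS^{-1})\big)$.

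It remains to match $2\,\mathrm{tr}(A(I+CS^{-1}))$ against the right-hand side of \eqref{FormulePrincipale}, and this is where I expect the real work to lie. I would expand $A$ using $\Phi = -\tfrac12\log\det S + \log\det\Psi$ and $\det\Psi = \det S + c^2$, separating the contribution in which $C$ is absent --- which should collapse, via Lemma \ref{Identit�Magique}, to the Abreu-type term $-\sum_{i,j}S^{ij}{}_{,ij} = u_{GK}(J)$ --- from the genuinely $C$-dependent remainder. For the remainder the plan is to rewrite every occurrence of $\log\det\Psi$ and its derivatives through the angle function $p = (c^2-\det S)/\det\Psi$ and the identities \eqref{partiesSymAsym}--\eqref{RelationDet}, and then to recognize the surviving second-order combinations of $\det S$ as precisely the quantities computed in Lemma \ref{2eLemme}: the terms quadratic in $d(\det S)$ should assemble into a multiple of $|\theta|^2$ with coefficient $\tfrac{4-2p}{1-p}$, while the genuinely second-order terms assemble, through \eqref{Theta-dp} and the $g$-antisymmetry of $[J,J^{*_\omega}]$, into $-\tfrac{2}{1-p}\langle[J,J^{*_\omega}],d\theta\rangle$. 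The main obstacle is thus purely computational but delicate: keeping the bookkeeping of the coefficients straight so that the $\Delta p$-type and $|dp|^2$-type contributions from Lemma \ref{2eLemme} recombine with the exact prefactors $\tfrac{4-2p}{1-p}$ and $\tfrac{2}{1-p}$, rather than some other rational functions of $p$.
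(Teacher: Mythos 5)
Your computation of the Chern--Ricci form and your reduction of \eqref{uAlternative} to the closed form $u_J = 2\,\mathrm{tr}\bigl(A(I+CS^{-1})\bigr)$ are both correct, and up to packaging this is the paper's own route: the paper wedges $dd^c\log\det\Psi$ and $dd^c\log\det S$ against $F$ separately, replacing $F$ by $\omega$ via the $(1,1)$-type argument in real dimension $4$, which is equivalent to your adjugate observation $\mathrm{adj}(M)=\tfrac{\det S}{\det\Psi}(I+CS^{-1})$, i.e.\ $M^{-1}=I+CS^{-1}$. The matrix identity $(\Psi^{-1})^{T}S\Psi^{-1}=\tfrac{\det S}{\det\Psi}S^{-1}$ you invoke is the correct $2\times 2$ fact (it follows from $CS^{-1}C=-\tfrac{c^2}{\det S}\,S$ together with \eqref{RelationDet}), and it yields $\sqrt{\det(g_{ij})}=\det S/(\det\Psi)^2$ exactly as in the paper, hence the stated $\rho^{\nabla}$.

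The gap is in the final matching step, which you leave as a plan, and the plan as described would not close. Carrying out the expansion of $2\,\mathrm{tr}\bigl(A(I+CS^{-1})\bigr)$ in admissible coordinates, using the identity $\sum_i(\det S)S^{ij}_{\;\;,i}=-\sum_i(\det S)_{,i}S^{ij}$ and the coordinate formulas for $\Delta p$ and $|\theta|^2$, produces the intermediate identity \eqref{s_GK-Deltap}, namely $u_J=-\sum_{i,j}S^{ij}_{\;\;,ij}-\tfrac{2\Delta p}{1-p}+\tfrac{4+2p}{1-p}|\theta|^2$ --- note the coefficient $4+2p$, not $4-2p$, and a Laplacian term rather than a $d\theta$-pairing. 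Passing from this to \eqref{FormulePrincipale} requires the nontrivial bi-Hermitian identity $\Delta p = 2p|\theta|^2+\langle[J,J^{*_\omega}],d\theta\rangle$, which the paper imports as formula (26) of \cite{apostolov:4}; substituting it converts $4+2p$ into $4-2p$ and produces the $d\theta$ term. Your proposed mechanism --- that the second-order terms assemble into $-\tfrac{2}{1-p}\langle[J,J^{*_\omega}],d\theta\rangle$ ``through \eqref{Theta-dp} and the $g$-antisymmetry of $[J,J^{*_\omega}]$'' --- conflates two different things: \eqref{Theta-dp} determines $\theta$ pointwise from $dp$, whereas $\langle[J,J^{*_\omega}],d\theta\rangle$ involves derivatives of both $p$ and $[J,J^{*_\omega}]$, and the relation tying it to $\Delta p$ and $|\theta|^2$ is precisely the content of the cited formula; it does not follow from antisymmetry plus bookkeeping. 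Your proposed split is also not how the terms organize: the coordinate expression for $\Delta p$ contains both second-order and quadratic-in-$d(\det S)$ contributions, so the quadratic terms alone cannot yield the prefactor $\tfrac{4-2p}{1-p}$. To complete the argument you must either cite that identity from \cite{apostolov:4} or re-derive it (e.g.\ by an explicit coordinate computation of $d\theta$ from \eqref{Theta-dp}) and then substitute it into \eqref{s_GK-Deltap}.
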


\begin{proof}
We shall compute $u_J$ from \eqref{uAlternative} and $\rho^\nabla$ from \eqref{RicciHermLoc}. In terms of the local holomorphic coordinates $(u^j,t^j)$ from Proposition \ref{LemmeCoordonnéesCX}, we have 
$$
g=\sum\limits_{i,j=1}^2((\Psi^{-1})^TS\Psi^{-1})_{ij}du^i\otimes du^j + \frac{\det S}{\det\Psi}\sum\limits_{i,j=1}^2S^{ij}dt^i\otimes dt^j,
$$
whence $\sqrt{\det(g_{ij})}=\frac{\det S}{(\det\Psi)^2}$ and so
\begin{equation}\label{rho}
\rho^{\nabla}=-\frac{1}{2}dd^c\log \det S + dd^c\log \det \Psi,
\end{equation}
$$
u_J=\frac{4\rho^{\nabla}\wedge F}{F\wedge F}=\frac{4dd^c\log \det \Psi\wedge F}{F\wedge F}-\frac{2dd^c\log \det S\wedge F}{F\wedge F}.
$$
To devellop the first term, we use the general formula $(\det A)' = (\det A)\mathrm{tr}\left(A^{-1}A'\right)$
for the $t$-derivative of the determinant of a non-singular matrix $A=A(t)$. In particular,
$$
(\log\det \Psi)_{,i}=\sum\limits_{\alpha,\beta=1}^2\Psi^{\alpha\beta}\Psi_{\beta\alpha,i},
$$
which yields
\begin{flalign*}
dd^c\log \det \Psi & = \sum\limits_{i,j,k=1}^2\left(( \log \det \Psi)_{,i}\Psi^{ij}\right)_{,k}d\mu^k\wedge dt^j \\
& = \sum\limits_{i,j,k,\alpha,\beta=1}^2\left(\Psi^{\beta\alpha}\Psi_{\alpha\beta,i}\Psi^{ij}\right)_{,k}d\mu^k\wedge dt^j \\
& = -\sum\limits_{i,j,k,\alpha,\beta=1}^2\left(\Psi^{\beta\alpha}\Psi_{\alpha i}\Psi^{ij}_{\;\;,\beta}\right)_{,k}d\mu^k\wedge dt^j\\
& = -\sum\limits_{i,j,k=1}^2\Psi^{ij}_{\;\;,ik}d\mu^k\wedge dt^j\\
\end{flalign*}
For the second equality, we have used the fact that $\Psi_{\alpha\beta,i}=S_{\alpha\beta, i}= S_{\alpha i,\beta }= \Psi_{\alpha i,\beta}$ (since $S$ is a Hessian), and also the 
identity
$$
\sum\limits_{i=1}^2 \Psi_{\alpha i,\beta}\Psi^{ij}=-\sum\limits_{i=1}^2 \Psi_{\alpha i}\Psi^{ij}_{\;\;,\beta}
$$
obtained by differentiating $\Psi\Psi^{-1} = I$ with respect to par $\mu^{\beta}$. Finally, note that $F$ is the $(1,1)$ part of $\omega$ with respect to $J$. And since $\bigwedge^{3,1}=\bigwedge^{1,3}=0$ in dimension 4, we have
$$
dd^c\log\det \Psi\wedge F=dd^c\log\det \Psi\wedge \omega = -\frac{1}{2}\sum\limits_{i,j=1}^2\left(\frac{\det S}{\det \Psi}S^{ij}\right)_{,ij}\omega\wedge\omega.
$$
Using $F\wedge F=\frac{\det S}{\det \Psi}\omega\wedge\omega$, we obtain
$$
\frac{4dd^c\log \det \Psi\wedge F}{F\wedge F}=-2\frac{\det \Psi}{\det S}\sum\limits_{i,j=1}^2\left(\frac{\det S}{\det \Psi}S^{ij}\right)_{,ij}.
$$

For the second term, we proceed as follows.
$$
dd^c\log\det S = \sum\limits_{i,j,k=1}^2\left(\frac{(\det S)_{,i}}{\det S}\Psi^{ij}\right)_{,k}d\mu^k\wedge dt^j,
$$
so
\begin{flalign*}
\frac{-2dd^c\log\det S \wedge F}{F\wedge F} & =\frac{\det \Psi}{\det S}\left(\frac{-2dd^c\log\det S \wedge \omega}{\omega\wedge\omega}\right) \\
& = -\frac{\det \Psi}{\det S}\sum\limits_{i,j=1}^2\left(\frac{(\det S)_{,i}}{\det S}\Psi^{ij}\right)_{,j}.
\end{flalign*}
But, here also, we have $\Psi^{ij}=\frac{\det S}{\det \Psi}S^{ij}-\frac{1}{\det \Psi}C_{ij}$ with
$$
\sum\limits_{i,j=1}^2\left(\frac{(\det S)_{,i}}{\det S}C^{ij}\right)_{,j}=\sum\limits_{i,j=1}^2\frac{(\det S)_{,ij}}{\det S}C_{ij}-\sum\limits_{i,j=1}^2\frac{(\det S)_{,i}(\det S)_{,j}}{(\det S)^2}C_{ij}=0,
$$
so using the identity from Lemma \ref{IdentitéMagique}, 
\begin{flalign*}
\frac{-2dd^c\log\det S \wedge F}{F\wedge F} & =-\frac{\det \Psi}{\det S}\sum\limits_{i,j=1}^2\left(\frac{(\det S)_{,i}}{\det \Psi}S^{ij}\right)_{,j} \\
& =\frac{\det \Psi}{\det S}\sum\limits_{i,j=1}^2\left(\frac{\det S}{\det \Psi}S^{ij}_{\;\;,i}\right)_{,j}.
\end{flalign*}
We thus obtain
\begin{flalign}
\notag u_J & =\frac{\det \Psi}{\det S}\sum\limits_{i,j=1}^2\left(-2\left(\frac{\det S}{\det \Psi}S^{ij}\right)_{,ij}+\left(\frac{\det S}{\det \Psi}S^{ij}_{\;\;,i}\right)_{,j}\right) \\
\notag & =\frac{\det \Psi}{\det S}\sum\limits_{i,j=1}^2\left(-\left(\frac{\det S}{\det \Psi}\right)S^{ij}_{\;\;,ij} -3\left(\frac{\det S}{\det \Psi}\right)_{,i}S^{ij}_{\;\;,j}-2\left(\frac{\det S}{\det \Psi}\right)_{,ij}S^{ij}\right) \\
& \label{s_GK-Deltap}=\sum\limits_{i,j=1}^2-S^{ij}_{\;\;,ij}-\frac{2\Delta p}{1-p}+\frac{4+2p}{1-p}|\theta|^2,
\end{flalign}
by using Lemma \ref{IdentitéMagique} on the middle term of the second expression. Finally, we invoke formula (26) from \cite{apostolov:4} which reads
$$
\Delta p = 2p|\theta|^2 + \langle[J,J^{*_\omega}],d\theta\rangle
$$
to land on the announced formula.
\end{proof}

\begin{cor}\label{CoroS_GK}
The generalized Hermitian scalar curvature of $J$ admits the following expression (which depends only on $J$ and $\omega$)
$$
u_{GK}(J)=u_J -\frac{4-2p}{1-p}|\theta|^2+\frac{2\langle[J,J^{*_\omega}],d\theta\rangle}{1-p}.
$$
Alternatively, we may write
\begin{equation}\label{s_GK-Scal}
u_{GK}(J)=s_g+\frac{2\Delta p}{1-p}-\frac{1}{1-p^2}\left(\frac{4+2p}{1-p}-\frac{1}{2}\right)|dp|^2,
\end{equation}
where $s_g$ is the scalar curvature of the associated Riemannian metric $g=\omega(\cdot,J\cdot)^s$.
\end{cor}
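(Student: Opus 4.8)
The plan is to obtain both identities as purely algebraic rearrangements of Theorem~\ref{ThmPrincipal}, using only the definition of $u_{GK}$ together with the two facts already recorded in the text: the Gauduchon relation \eqref{Scal-u} and formula~(26) of \cite{apostolov:4}. The key observation is that, by \eqref{ScalGKPot} (with $\tau^{ij}=S^{ij}$, since $S=\mathrm{Hess}(\tau)$), the generalized Hermitian scalar curvature is exactly $u_{GK}(J)=-\sum_{i,j=1}^2 S^{ij}_{\;\;,ij}$; hence the leading term of \eqref{FormulePrincipale} is precisely $u_{GK}(J)$.

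For the first identity I would simply isolate this term in \eqref{FormulePrincipale}, which gives
$$
u_{GK}(J)=-\sum_{i,j=1}^2 S^{ij}_{\;\;,ij}=u_J-\frac{4-2p}{1-p}|\theta|^2+\frac{2\langle[J,J^{*_\omega}],d\theta\rangle}{1-p},
$$
with no further computation required.

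For the second identity I would trade $u_J$ and the pairing term for Riemannian data. Substituting $u_J=s_g+\tfrac12|\theta|^2$ from \eqref{Scal-u} and replacing $\langle[J,J^{*_\omega}],d\theta\rangle$ by $\Delta p-2p|\theta|^2$ via formula~(26) of \cite{apostolov:4}, the first identity collapses, after collecting the $|\theta|^2$-terms, to
$$
u_{GK}(J)=s_g+\frac{2\Delta p}{1-p}+\left(\frac12-\frac{4+2p}{1-p}\right)|\theta|^2.
$$
(Equivalently, this can be read off directly from the intermediate line \eqref{s_GK-Deltap} in the proof of Theorem~\ref{ThmPrincipal}, followed by \eqref{Scal-u}.) The final step converts $|\theta|^2$ into $|dp|^2$: from \eqref{Theta-dp}, the identity $[J,J^{*_\omega}]^2=4(p^2-1)\mathrm{Id}$, and the $g$-antisymmetry of $[J,J^{*_\omega}]$ already exploited in the proof of Lemma~\ref{2eLemme}, one obtains $|\theta|^2=\tfrac{1}{1-p^2}|dp|^2$. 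Substituting this and factoring the coefficient produces exactly \eqref{s_GK-Scal}.

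Every step is formal, so there is no substantive obstacle; the only care needed lies in the sign-tracking during the passage $|\theta|^2\mapsto|dp|^2$ and in observing that $1-p^2>0$ throughout $\mathring{M}$ (since $-1<p<1$ there for a strictly generalized $J$), which ensures that all denominators are nonzero and that the identities hold as equalities of smooth functions on $\mathring{M}$.
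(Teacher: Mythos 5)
Your proof is correct and takes essentially the same route as the paper's: the first identity is read off by comparing \eqref{FormulePrincipale} with the identification $u_{GK}(J)=-\sum_{i,j=1}^2S^{ij}_{\;\;,ij}$, and the second follows from the intermediate line \eqref{s_GK-Deltap} together with \eqref{Scal-u} and \eqref{NormesTheta-dp} — which is exactly equivalent to your substitution of \eqref{Scal-u} and formula (26) of \cite{apostolov:4} into the first identity. Your algebra, including the collected coefficient $\tfrac12-\tfrac{4+2p}{1-p}$ and the observation that $-1<p<1$ on $\mathring{M}$ keeps all denominators nonzero, checks out.
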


\begin{proof}
The first expression is obtained trivially by comparing formula \eqref{FormulePrincipale} with the definition of $u_{GK}(J)$. The second expression is obtained similarly from \eqref{s_GK-Deltap} by using \eqref{Scal-u} as well as the identity
\begin{equation}\label{NormesTheta-dp}
|\theta|^2=\frac{1}{1-p^2}|dp|^2
\end{equation}
which one deduces from \eqref{Theta-dp}.
\end{proof}

\begin{rem}
{\rm
In \cite{coimbra:1} a notion of generalized scalar curvature depending on an arbitrary function $\phi\in C^{\infty}(M)$ and valid in all dimensions is invented. This expression takes the following form (\cite{garcia-fernandez:1} p.22):
$$
GS^{\phi}(J)=s_g+4\Delta\phi-4|d\phi|^2-\frac{1}{2}|db|^2.
$$
In dimension 4, we have shown in Lemma \ref{GK-Gauduchon} that $db=*\theta$ and so $|db|^2=|\theta|^2=(1-p^2)^{-1}|dp|^2$ (by \eqref{NormesTheta-dp}). 
Comparing with \eqref{s_GK-Scal}, we conclude that $GS^{\phi}(J)=u_{GK}(J)$ if and only if $\phi=-\frac{1}{2}\log(1-p)$, which suggest a prefered choice for the function $\phi$ of \cite{garcia-fernandez:1}.
}
\end{rem}

{}\end{document}